\newtheorem{Theorem}{Theorem}[part]
\newtheorem{Definition}{Definition}[part]
\newtheorem{Proposition}{Proposition}[part]
\newtheorem{Assumption}{Assumption}[part]
\newtheorem{Lemma}{Lemma}[part]
\newtheorem{Corollary}{Corollary}[part]
\newtheorem{Remark}{Remark}[part]
\def \Prod{\displaystyle\prod}
\def \I{\mathbb{I}}
\def \N{\mathbb{N}}
\def \R{\mathbb{R}}
\def \E{\mathbb{E}}
\def \F{\mathbb{F}}
\def \P{\mathbb{P}}
\def \D{\mathbb{D}}
\def \S{\mathbb{S}}
\def \1{\mathds{1}}
\def \0{\mathds{O}}
\def \Bc{\mathcal{B}}
\def \Cc{\mathcal{C}}
\def \Fc{\mathcal{F}}
\def \Hc{\mathcal{H}}
\def \Ic{\mathcal{I}}
\def \Lc{\mathcal{L}}
\def \Mc{\mathcal{M}}
\def \Nc{\mathcal{N}}
\def \Sc{\mathcal{S}}
\def \Vc{\mathcal{V}}
\newcommand{\restr}[2]{#1_{\mkern 1mu \vrule height 2ex\mkern2mu #2}}
\def \l{\left}
\def \r{\right}
\def \eps{\varepsilon}
\def\beqs{\begin{eqnarray*}}
\def\enqs{\end{eqnarray*}}
\def\beq{\begin{eqnarray}}
\def\enq{\end{eqnarray}}
\def\i{\mbox{\tiny \rm 1}}
\def\S{\mathbf S}
\providecommand{\keywords}[1]
{
  \small	
  \textbf{Keywords---} #1
}
\def\rme{{\rm e}}
\def\rmd{{\rm d}}
\def\eqsp{\;}
\title{Optimal Stopping of Branching Diffusion Processes}
\author{ Idris Kharroubi\footnote{Research of the author partially supported by ANR grant RELISCOP.}\\ LPSM, UMR CNRS 8001,\\ Sorbonne Universit\'e
  and Universit\'e Paris Cit\'e,
  \\ idris.kharroubi @ sorbonne-universite.fr
  \and Antonio Ocello
  \footnote{The work of this author was funded by the European Union (ERC-2022-SYG-OCEAN-101071601). Views and opinions expressed are however those of the author only and do not necessarily reflect those of the European Union or the European Research Council Executive Agency. Neither the European Union nor the granting authority can be held responsible for them.}
  \\ 
  CMAP, UMR CNRS 7641,\\
  Ecole Polytechnique,
  \\ antonio.ocello @ polytechnique.edu
  }
\date{\today}
\begin{document}

\maketitle

\begin{abstract}
  This article explores an optimal stopping problem for branching diffusion processes.  It consists in looking for optimal stopping lines, a type of stopping time that maintains the branching structure of the processes under analysis. By using a dynamic programming approach, we characterize the value function for a multiplicative cost, which may depend on the particle's label. We reduce the problem's dimensionality by setting a branching property and defining the problem in a finite-dimensional context. Within this framework, we focus on the value function, establishing uniform continuity and boundedness properties, together with an innovative dynamic programming principle. This outcome leads to an analytical characterization with the help of a nonlinear elliptic PDE. We conclude by showing that the value function serves as the unique viscosity solution for this PDE, generalizing the comparison principle to this setting.
\end{abstract}

\noindent \textbf{MSC Classification- }60G40, 60J80, 35J60, 49L20, 49L25

\noindent \keywords{
  Optimal stopping, branching diffusion process, dynamic programming principle, Hamilton-Jacobi-Bellman equation, viscosity solution.
}


\maketitle

\section{Introduction}

Since its introduction in the late sixties in \citet{INW69, INW681, INW682, Sk64}, the class of branching diffusion processes received a great deal of interest. This object is used to describe the evolution of a population where we are interested in a special feature, $e.g.$, the spatial motion, of identical particles that reproduce at random times.

These processes are well-suited  in capturing a dual level of interaction. A macroscopic dynamics, marked by the branching aspect, is connected to a microscopic one, characterized by a stochastic differential equation. By establishing a link between macroscopic and microscopic facets, these dynamics prove applicable in a wide array of domains, from biology to finance. In the realm of biology, they prove invaluable for modeling phenomena such as parasite infection within cell populations  \citep[see, $e.g.$,][]{bansaye2010branching, marguet2020parasite, marguet2023parasite}. Conversely, in the financial domain, these processes are used to characterize options related to cryptocurrencies \citep[see, $e.g.$,][]{kharroubi2024stochastic}.

In the study of branching diffusion processes, a fundamental question emerges: at what juncture does it become optimal to halt such a process? This question delves into the determination of an opportune point in time to stop the evolution of a branching diffusion. This research line echoes the optimization of a given functional to trade-off between the diffusion and reproduction of these processes and a possible degradation of the reward. By investigating the optimal stopping time for branching diffusion processes, we aim to shed light on the decision-making process involved in terminating these dynamical systems, thereby enhancing our understanding of their behaviour and enabling more effective applications in various fields of study.

One possible approach to consider is looking at the entire branching diffusion process as a whole, as done in \citet{Ocello:rel_form_branching}, and finding a universal stopping time that applies to all active branches simultaneously. This global stopping time serves as a comprehensive decision rule, enabling a synchronized halt to the progression of each branch in the system, regardless of their characteristics or temporal disparities.

Although the aforementioned approach has its appeal, it may not fully align with the intrinsic structure of such processes. Indeed, the fundamental nature of a branching process, even when studied as a collective entity, is fundamentally rooted in its ability to portray the trajectory and dynamics of a singular individual. Therefore, while a global perspective may offer valuable insights and provide a comprehensive overview of the system, it may inadvertently disregard the inherent individuality of the branches.

This dual mode between the individuality of the single component as opposed to the wholeness of the population is a key concept in cooperative game theory. For example, mean-field control literature \citep[see, $e.g.$,][]{book:Carmona-Delarue_1, book:Carmona-Delarue_2} deals with the control of large-scale systems involving a multitude of interacting agents, assumed to be rational decision-makers who aim to optimize their objective functions. The goal is to find control strategies that maximize a specific objective at the population level, which aligns with the optimal behaviour of each agent, influenced by the collective behaviours of the entire population.
An additional example illustrating the transformation of global behaviour into individual optimization can be observed in \citet{claisse18, kharroubi2024stochastic, Ocello:Ctrl-superprocesses, Ocello:rel_form_branching}. These studies prove how control strategies are contingent upon the decisions made by each participant. Moreover, the concept of the branching property emerges as a means to reduce the complexity of the problem, consequently shifting the focus toward analyzing the dynamics of the individual agents.

To capture the decision-making process of individuals within a collective framework, we adopt the concept of stopping lines. This mathematical object, introduced in \citet{Chauvin-86-1,Chauvin-86-2}, serves as the counterpart to stopping times in branching dynamics.
Stopping lines are characterized by a subset of the process's genealogy, where no member can be traced back to another member, and we can see their use in applications such as \citet{Lambert:Covid:efficiency_quarantining}.

Although stopping lines have been used in previous studies, the exploration of optimal stopping lines based on specific criteria remains, to the best of our knowledge, an open problem. This article aims to address this research gap by directing our attention to this exact issue.

An application of this optimal stopping problem is possible in the field of finance, specifically in the valuation of American options tied to cryptocurrencies \citep[see, $e.g.$,][]{option-cruptocurrencies}. This modeling is the one discussed in \citet{kharroubi2024stochastic}, in the case of super-replication.
In the realm of biology, another pertinent utilization arises in the optimization of halting infections caused by parasites, a model explored by \citet{bansaye2010branching, marguet2020parasite, marguet2023parasite}. 
This application gains significance as the initial stage of a mutant invasion closely aligns with the characteristics of a branching process, as expounded in works like \citet{barbour2013approximating, barbour2015escape, bansaye2022sharp}. In this phase, critical stopping criteria become imperative, especially in the context of identifying the emergence and detection of cancer.

Within a branching diffusion process framework, we look for the characterization of the value function linked to an infinite horizon optimal stopping problem. Optimization is done over the set of stopping lines,
where each branch becomes eligible for halting only if no preceding ancestor has been stopped before.
We narrow our investigation to multiplicative rewards, similar to the approach taken in \citet{claisse18, Nisio}. Drawing inspiration from \citet{Chauvin91}, we prove a fundamental branching property. This enables us to analyze systems originating from a single particle and to provide a differential characterization of the corresponding problem.

Moreover, this form of the reward function has been utilized in the financial literature to leverage branching processes for the development of numerical methods aimed at solving semilinear PDEs \citep[see, e.g.,][]{henry2012counterparty, henry2019branching, henry2012cutting, agarwal2020branching, henry2021branching}. By applying our framework to the specific case where all the reward functions are equal, we align with this established body of work, particularly in the context of semilinear obstacle problems. In Remark \ref{rmk:g_i_equal-1} and Remark \ref{rmk:g_i_equal}, we explicitly demonstrate how our approach encompasses this setting, providing a clear pathway for connecting our results to this literature. This connection underscores the versatility of our approach, demonstrating its relevance not only as a general theoretical contribution but also as a practical tool within the numerical analysis literature for such PDEs.

Establishing an original Dynamic Programming Principle (DPP), we extend the framework of the classical optimal stopping problem to our branching context.
This outcome paves the way for an analytical characterization of the value function.

The corresponding PDE takes the form of an obstacle problem with a semilinear term, which involves a polynomial series associated with the branching mechanism and value functions related to offspring labels. Under the assumption that this series has an infinite radius of convergence, we show that the value function is a solution in the sense of viscosity to this PDE. 

To conclude the PDE characterization, we present a comparison theorem. The presence of the semilinear term, tied to the value functions associated with offspring labels, introduces a non-classical aspect to this PDE. 
We explore a multiplicative penalization, making the viscosity solutions go towards zero in the spatial variable as a result of the previously demonstrated polynomial growth. Then, using the assumption of vanishing rewards as the label goes to infinity, we establish the comparison principle for value functions related to sufficiently large starting labels. We finally extend this analysis to cover the remaining functions through a backward induction on the size of the label.

The remainder of the paper is structured as follows. Section \ref{Section:branching-diffusions} presents a detailed description of the model under examination, focusing on the characteristics of branching diffusion processes. In Section \ref{Section:optimal-stopping-pb}, we introduce the notion of stopping lines and describe the optimal stopping problem. Section \ref{Section:DPP} is dedicated to the regularity properties of the value function and to the dynamic programming principle. In Section \ref{Section:HJB}, we characterize the value function as the unique viscosity solution to an obstacle problem. Finally, Sections \ref{sec:thm:DPP} and \ref{sec:theorem:result_PDE} are devoted to the proofs of the main results, the dynamic programming principle and the viscosity solution properties respectively.

\section{Branching diffusion processes formulation}\label{Section:branching-diffusions}

\paragraph{Notations.}
We adopt the Ulam--Harris notation. Let $\Ic$ be the set of labels
\begin{align*}
    \Ic := & \{\varnothing\}\cup\bigcup_{n=1}^{+\infty}\N^n\;,
\end{align*}
where the label $\varnothing$ corresponds to the mother particle. For $n\geq 1$, write $i=i_1\ldots i_n$ for the multi-integer $i=(i_1,\ldots,i_n)\in\N^n$ and, for $n,m\geq 1$, $i=i_1\ldots i_n\in \N^n$, and $j=j_1\ldots j_m\in \N^m$, define their concatenation $i j\in\N^{n+m}$ as
\begin{align*}
    i j := &  i_1\ldots i_n j_1\ldots j_m\;,
\end{align*}
with $\varnothing i = i \varnothing=i$, for all $i\in\Ic$. Consider the partial ordering $\preceq$ (resp. $\prec$) by
\begin{align*}
    i\preceq j ~ \Leftrightarrow ~ \exists \ell\in\Ic~:~i=j\ell
    \qquad
    \left(\textrm{resp.}~i\prec j ~ \Leftrightarrow ~\exists \ell\in\Ic\setminus \{\varnothing\}~:~i=j\ell\right)\;,
\end{align*}
for $i,j\in\Ic$.
We endow $\Ic$ with the discrete topology, generated by the distance
\begin{align*}
    d^\Ic(i,j) := \sum_{\ell = p+1}^n (i_\ell +1) + \sum_{\ell' = p+1}^m (j_{\ell'} +1)\;,
    \qquad\text{ for }
    i=i_1\cdots i_{n}\in\N^n, \;j=j_1\cdots j_{m} \in \N^m\;,
\end{align*}
with $p = \max\{\ell\geq1:i_\ell=j_\ell\}$ the generation of the greatest common ancestor, and write $|i|:= d^\Ic(i,\varnothing)$ for $i\in\Ic$.
For $i\in\Ic\setminus\{\varnothing\}$, $i=i_1\cdots i_n\in\N$, denote by $i-$ its parent, defined by $i-=i_1\cdots i_{n-1}$.

Let $\D\l(\R_+;\Sc\r)$ be the space of càdlàg (right-continuous and left-limited) functions that take values in $\Sc$ and $\Mc(\Sc)$ the set of finite measure on $\Sc$ endowed with the weak topology, for $\Sc$ a Polish space. 
Let $E$ be the subset of $\Mc\l(\Ic\times\R^d\r)$ defined as
\begin{align*}
    E:=\l\{
      \sum_{i\in V}\delta_{(i,x_i)}\;:\; V\subseteq \Ic \text{ finite},\; x_i\in\R^d,\; i\nprec  j,\text{ for }i,j\in\Vc
    \r\}\;.
\end{align*}
As it is a closed set in $\Mc\l(\Ic\times\R^d\r)$ \citep[see, $e.g.$, Proposition A.7,][]{kharroubi2024stochastic}, we have that $E$ is Polish \citep[see, $e.g.$, Section 3.1.1,][]{Dawson}.
Denote $\langle \mu,f\rangle := \sum_{i\in V} f_i\l(x_i\r)$ for $\mu:=\sum_{i\in V} \delta_{(i,x_i)}$ and $f=\l(f_i\r)_{i\in\Ic}$ with $f_i:\R^d\to\R$. Take $b:~ \R^d \rightarrow \R^d$ and $\sigma:~ \R^d \rightarrow \R^{d \times m}$ the drift term and diffusion coefficient of the spatial motion of the particles of the population, and consider spatially dependent death intensity $\alpha:~ \R^d \rightarrow \R_+$ and reproduction probabilities $p_k: \R^d \rightarrow [0,1]$, for $k \in \N$, with $\sum_{k\in\N}p_k(x) = 1$, for $x\in \R^d$.
Denote $\Lc$  the infinitesimal generator of the spatial motion
\begin{align*}
    \Lc f(x):=\frac{1}{2}{\rm Tr}\l(\sigma\sigma^\top(x)D^2_x f(x)\r) + b(x)^\top D_x f(x)\;,\qquad \text{ for }f \in C^2(\R^d)\;,
\end{align*}
where $D^2_x f$ and $D_x f$ denote respectively the Hessian matrix and the gradient of $f$.

\paragraph{Canonical space.} We focus on constructing the branching diffusion process to be studied within its canonical space as done in \citet{claisse18-v1}.

Fix a real number $\bar\alpha>0$.
Consider the space $C^d:= C\l(\R_+,\R^d\r)$ of continuous paths (resp. $M$ of integer-valued Borel measures on $\R_+\times \l[0,\bar\alpha\r]$ that are
locally finite), equipped with the locally uniform convergence metric (resp. equipped with the vague topology). Note that $M$ with the vague topology is Polish \citep[see, $e.g.$, Theorem 4.2,][]{book:KALLENBERG-RM}. Take on $C^d$ (resp. $M$) its canonical filtration $\l(\Cc^d_s\r)_{s\geq0}$ (resp. $\l(\Mc_s\r)_{s\geq0}$) and its Borel $\sigma$-algebra $\Cc^d$ (resp. $\Mc$), which coincides with $\sigma\l(\bigvee_{s\geq0}\Cc^d_s\r)$ \citep[see, $e.g.$, Section 1.3,][]{SV97} (resp. with $\sigma\l(\bigvee_{s\geq0}\Mc_s\r)$). Note that $\Mc_s$ is the smallest $\sigma$–algebra that makes the map $\mu\in M\mapsto \mu(A)$ measurable for any Borel subset $A$ of $[0,s]\times [0,\bar \alpha]$.

Define the space $H$ (resp. $H^i$ for $i\in\Ic$), its Borel $\sigma$-algebra $\Hc$ (resp. $\Hc^i$) and its filtration $\l(\Hc\r)_{s\geq0}$ (resp. $\l(\Hc^i\r)_{s\geq0}$) as
\begin{align*}
    H &:= (C^d\times M)^{\Ic}\;\;,
    \qquad\qquad~~~~~
    \Hc := (\Cc^d\times \Mc)^\Ic\;,
    \\
    \text{(resp. }
    H^i&:=(C^d\times M)^{\{j\in\Ic~:~j\preceq i\}}\;,
    \qquad
    \Hc^i:=(\Cc^d\times \Mc)^{\otimes\{j\in\Ic~:~j\preceq i\}}
    \text{ )}
    \;,
\end{align*}
and
\begin{align*}
    \Hc_s := \l(\Cc_s\times \Mc_s\r)^{\Ic}
    \qquad
    \text{(resp. }
    \Hc^i_s:=(\Cc^d_s\times \Mc_s)^{\otimes\{j\in\Ic~:~j\preceq i\}}
    \text{ )}
    \qquad\text{ for }s\geq 0\eqsp.
\end{align*}

Define, now, the canonical filtered probability space $\big(\Omega,\Fc,\F=\l(\Fc_s\r)_{s\in\R_+},\P\big)$ as
\begin{align*}
    &\Omega := H\;,
    \qquad
    \Fc := \Hc^\P\;,
    \qquad
    \Fc_s := \Hc^\P_s\;,
    \qquad
    \P := (\P^\circ\otimes\P^{\i})^{\otimes\Ic}
    \;,
\end{align*}
where $\Hc^\P$ (resp. $\l(\Hc^\P_s\r)_{s\geq0}$) is the usual $\P$-completion of $\Hc$ (resp. $\l(\Hc_s\r)_{s\geq0}$) and $\P^\circ$ (resp. $\P^{\i}$) is the Wiener measure on $C^d$ (resp. the Poisson measure on $\R_+\times[0,\bar\alpha]$ with Lebesgue intensity). In this space, define the canonical processes $\l(B^i,Q^i\r)_{i\in\Ic}$ as
\begin{align*}
    B^i\l(\l(w_j,\mu_j\r)_{j\in\Ic}\r)=w^i_s\;,\qquad
    Q^i\l(\l(w_j,\mu_j\r)_{j\in\Ic},\;A\r)=\mu^i(A)\;,
\end{align*}
for $\l(w_j,\mu_j\r)_{j\in\Ic}\in \Omega$, $s\geq0$, and $A\in\Bc\l(\R_+\times \l[0,\bar\alpha\r]\r)$. From their definitions, under $\P$, these maps represent a family of independent random variables such that $B^i$ is a $m$-dimensional Brownian motion and $Q^i\l(\rmd t,\rmd z\r)$ is a Poisson random measure on $\R_+\times \R_+$ with Lebesgue intensity measure. Note that
\begin{align*}
    \Hc^i_s & =  \sigma\Big(B^j_s, Q^j\l(A\r)~:~  j\preceq i,\; s\in[0,t],\; A\in \Bc\l([0,t]\times \R_+\r)\Big)
\end{align*}
and
\begin{align*}
    \Hc^i & = \sigma\l(\bigvee_{s\geq0}\Hc^i_s\r)\eqsp\eqsp,\qquad\text{ for }i\in\Ic\eqsp.
\end{align*}

For a random variable $Y$, define the shifted random variable $Y^{t,\bar \omega}$ as
\begin{align*}
    Y^{t,\bar \omega}(\omega) = Y(\bar \omega \oplus_t \omega)\;,
\end{align*}
with $\bar \omega \oplus_t \omega$ defined by
\begin{align*}
    B^i\l(\bar \omega \oplus_t \omega\r)_s ~ = ~ \bar w^i_{s\wedge t}+(w^i_{s\vee t}-w^i_{t})
    \quad
    \text{ and }
    \quad
    Q^i\l(\bar \omega \oplus_t \omega\r)~=~\restr{\bar \mu^i}{[0,t]}+\restr{\mu^i}{(t,+\infty]}
    \eqsp,
\end{align*}
for $t\geq 0$ and  $\bar \omega=(\bar w^i,\bar \mu^i)_{i\in\Ic}, \omega=(w^i,\mu^i)_{i\in\Ic}\in\Omega$

Working in the canonical space simplifies the general presentation of the results. An alternative strong construction of the process $Z^\mu$ that satisfies \eqref{eq:semimartingale-decomposition} can be linked to this construction in the canonical space without loss of generality, with the use of \citet[Proposition 3.5 and Corollary 3.6,][]{claisse18-v1}.
 
\paragraph{Existence and path-wise uniqueness.}

  As in \citet{claisse18}, let $Z^\mu$ be the following measure valued process
  \begin{align*}
    Z^\mu_t := \sum_{i\in\Vc^\mu_t}\delta_{\l(i,X^{\mu,i}_t\r)}\;,
  \end{align*}
  starting at $t=0$ from the initial configuration $\mu\in E$, with $\Vc^\mu_t$ the set of labels of the alive particles and $X^{\mu,i}_t$ the position of the particle $i$, for $t\geq0$. The dynamics of a process $Z^{\mu}$ is characterized by the following family of semi-martingale decomposition
\begin{align}
\label{eq:semimartingale-decomposition}
    \begin{split}
        \l\langle Z^\mu_{t},f\l(t,\cdot\r)\r\rangle
        =&
        \l\langle \mu,f\l(0,\cdot\r)\r\rangle
        +
        \int_0^{t}\sum_{i\in\Vc^\mu_{s}} D_x f_i\l(s,X^{\mu,i}_s\r)\sigma\l(X^{\mu,i}_s\r)dB^i_s
        \\
        &+\int_0^{t}\sum_{i\in\Vc^\mu_{s}} \l(
        \partial_s f_i\l(s,X^{\mu,i}_s\r)+\Lc f_i\l(s,X^{\mu,i}_s\r)\rmd s
        \r)
        \\
        &+\int_{(0,t]}\sum_{i\in\Vc^\mu_{s-}} 
        \sum_{k\geq0}
        \l(
        \sum_{\ell=0}^{k-1} f_{i\ell}\l(s,X^{\mu,i}_s\r)- f_i\l(s,X^{\mu,i}_s\r)
        \r)\1_{I_k\l(X^{\mu,i}_s\r)}(z)Q^i(\rmd s\rmd z),
        \\
        &\qquad\qquad\qquad\qquad\qquad\qquad\qquad\qquad\qquad\qquad
        \text{ for }t\geq0,\P(d\omega)-a.s.,
    \end{split}
\end{align}
for $f=\l(f_i\r)_{i\in\Ic}\in C^{1,2}\l(\R_+\times \R^d\r)^\Ic$, where,
\begin{align*}
    \begin{split}
      \Vc^\mu_s:=&\l\{i\in\Ic~:~Z_s\l(\{i\}\times \R^d\r)\neq 0\r\}\;,
      \\
      I_k\l(x\r) :=& \l[
        \alpha(x)\sum_{\ell=0}^{k-1}p_\ell(x),\;
        \alpha(x)\sum_{\ell=0}^{k}p_\ell(x)
      \r)\;,
    \end{split}\qquad\text{ for }(s,x)\in\R_+\times\R^d\;.
\end{align*}

In \eqref{eq:semimartingale-decomposition}, one can see that the first two integrals represent the spatial diffusive behaviour of the particles, while the integral with respect to the Poisson random measures captures the jumps in the measure-valued process $Z$ caused by particle death and reproduction. Additionally, this term demonstrates that reproduction in this process is \textit{local}, meaning the offspring are born at the exact location where their mother dies.

More general interactions in the model parameters can be considered, allowing for dependence on the coefficients \citep[see, $e.g.$,][]{Ocello:rel_form_branching}.

Consider the following assumptions, which will be used throughout the paper.
\begin{Assumption}
\label{Assumption_H_0}
    \textit{(i)}
    The functions  $b$ and $\sigma$ are Lipschitz continuous, $i.e.$, there exists a constant $L>0$ such that
    \begin{align}
    \label{eq:Lipschitz_assumption:b_sigma}
        |b(x)-b(x')|+|\sigma(x)-\sigma(x')|  \leq & \;L\;|x-x'|\;,
        \qquad
        \text{ for } x,x'\in\R^d\;.
    \end{align}

    \textit{(ii)}
    The functions $(p_k)_{k\geq 0}$ measurable and satisfy
    \begin{align*}
      M:=\sup_{x\in\R^d}\sum_{k\geq0}k \;p_k(x)\;<\;\infty\;.
    \end{align*}

    \textit{(iii)}
    The function $\alpha$ is measurable and bounded by $\bar\alpha>0$.
\end{Assumption}

These assumptions ensure existence and path-wise uniqueness of the process $Z$, corresponding to the uncontrolled version of \citet[Proposition 2.1,][]{claisse18}, as stated below. Denote $N^\mu$ the process corresponding to the number of alive particles, $i.e.$,
\begin{align*}
    N^\mu_t & =  \l|\Vc^\mu_t\r|\;,\quad t\geq 0\;,
\end{align*}
where $|V|$ stands for the cardinality of a set $V\subset \Ic$.

\begin{Proposition}
    \label{Prop:existence-uniqueness}
    Fix $\mu=\sum_{i\in V}\delta_{(i,x_i)}\in E$. Suppose that Assumption \ref{Assumption_H_0} holds. Then, there exists a unique (up to indistinguishability) càdlàg and adapted process $\l(Z^{\mu}_{t}\r)_{t\geq0}$ valued in $E$ satisfying the semi-martingale decomposition \eqref{eq:semimartingale-decomposition}. Moreover, we have
    \begin{align*}
        \E\l[
        \sup_{0\leq s\leq t}\;
        N_s^{\mu}
        \r] &\leq
        \l|V\r| {\rm e}^{\bar\alpha M t}\;,
        \qquad
        \text{ for }t\geq0\;.
    \end{align*}
\end{Proposition}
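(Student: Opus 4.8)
# Proof Proposal for Proposition \ref{Prop:existence-uniqueness}

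\textbf{Overall strategy.} The plan is to build the process $Z^\mu$ branch by branch along the Ulam--Harris tree, using the strong solution theory for SDEs for the spatial motion of each particle and the Poisson random measures $Q^i$ to determine death and reproduction times. Uniqueness and existence will then follow by an induction on generations, while the moment estimate on $N^\mu$ will come from a Gronwall argument applied to the expected number of particles.

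\textbf{Construction (existence).} I would proceed recursively. Starting from $\mu=\sum_{i\in V}\delta_{(i,x_i)}$, for each $i \in V$ let $X^{\mu,i}$ solve the SDE $\rmd X_t = b(X_t)\,\rmd t + \sigma(X_t)\,\rmd B^i_t$ with $X_0 = x_i$; existence and pathwise uniqueness of this strong solution is guaranteed by the Lipschitz Assumption \ref{Assumption_H_0}(i) (standard It\^o theory). Define the death/branching time of particle $i$ as the first atom $(s,z)$ of $Q^i$ with $z \in [0,\alpha(X^{\mu,i}_{s-}))$; since $\alpha \le \bar\alpha$ by Assumption \ref{Assumption_H_0}(iii) and $Q^i$ has Lebesgue intensity on $\R_+\times[0,\bar\alpha]$, this time is a.s. finite-rate, hence well-defined and positive. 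At that time, the particle dies and, reading off which subinterval $I_k(X^{\mu,i}_{s-})$ contains $z$, it is replaced by $k$ children $i0,\dots,i(k-1)$ born at the mother's death location $X^{\mu,i}_{s-}$; the $p_k$ are measurable (Assumption \ref{Assumption_H_0}(ii)), so the intervals $I_k$ are well-defined and partition $[0,\alpha(x))$. Each child then evolves by the same rule driven by its own independent pair $(B^j, Q^j)$, using the shift operators $\oplus_t$ already introduced. Iterating over generations produces a consistent family $(X^{\mu,i})$ and alive-set process $(\Vc^\mu_t)$; one checks that the resulting $Z^\mu_t = \sum_{i\in\Vc^\mu_t}\delta_{(i,X^{\mu,i}_t)}$ is càdlàg, adapted, $E$-valued (the no-ancestor condition $i \nprec j$ for alive particles is preserved by construction), and that applying It\^o's formula to each $\langle \cdot, f_i\rangle$ and summing yields exactly \eqref{eq:semimartingale-decomposition}. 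A subtlety I would address here is non-explosion: one must verify the number of particles does not blow up in finite time, which is handled by the moment bound below (and justifies that the sum defining $Z^\mu_t$ is finite for all $t$).

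\textbf{Uniqueness.} Given any càdlàg adapted $E$-valued solution of \eqref{eq:semimartingale-decomposition}, I would argue that testing against functions $f$ supported on a single label $i$ forces $X^{\mu,i}$ to satisfy the above SDE up to the first jump of the $i$-th coordinate, and testing against indicator-type functions (approximated in $C^{1,2}$) of the label set forces the jump times and offspring counts to be exactly those read off from $Q^i$ as above. Pathwise uniqueness of the spatial SDEs then pins down the solution generation by generation; an induction on $|i|$ gives indistinguishability of $Z^\mu$.

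\textbf{Moment estimate.} For the bound on $\E[\sup_{s\le t} N^\mu_s]$, note $N^\mu$ is a pure-jump process that increases by $k-1$ at each branching event of type $k$. Taking $f_i \equiv 1$ for all $i$ in \eqref{eq:semimartingale-decomposition} (formally; or working with a localizing sequence of stopping times $\tau_n = \inf\{t : N^\mu_t \ge n\}$ to stay integrable) gives
\begin{align*}
    N^\mu_{t\wedge\tau_n} = |V| + \int_{(0,t\wedge\tau_n]}\sum_{i\in\Vc^\mu_{s-}}\sum_{k\ge0}(k-1)\,\1_{I_k(X^{\mu,i}_s)}(z)\,Q^i(\rmd s\,\rmd z)\;.
\end{align*}
Taking expectations, using the compensator $\rmd s\,\rmd z$ of $Q^i$ and $\sum_k (k-1)\1_{I_k(x)} \le \sum_k k\,p_k(x)\,\alpha(x) \le \bar\alpha M$ by Assumption \ref{Assumption_H_0}(ii)--(iii), we get $\E[N^\mu_{t\wedge\tau_n}] \le |V| + \bar\alpha M \int_0^t \E[N^\mu_{s\wedge\tau_n}]\,\rmd s$, so Gronwall's lemma yields $\E[N^\mu_{t\wedge\tau_n}] \le |V|\,\rme^{\bar\alpha M t}$. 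Letting $n\to\infty$ by monotone convergence gives $\E[N^\mu_t] \le |V|\,\rme^{\bar\alpha M t}$ and in particular non-explosion. Finally, since $N^\mu$ has only positive jumps, $\sup_{0\le s\le t} N^\mu_s = N^\mu_t$ outside a negligible set handling of càdlàg left-limits — more carefully, $\sup_{s\le t} N^\mu_s \le N^\mu_t$ is false in general for pure-jump processes with negative... actually $N^\mu$ only jumps by $k-1 \ge -1$, so I would instead bound $\sup_{s \le t} N^\mu_s$ by $|V| + \int_{(0,t]}\sum_{i}\sum_{k\ge 1}(k-1)\1_{I_k}\,Q^i$, whose expectation is again controlled by $\bar\alpha M\int_0^t \E[N^\mu_s]\,\rmd s \le |V|(\rme^{\bar\alpha M t}-1)$, giving the claim.

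\textbf{Main obstacle.} The delicate point is the non-explosion argument and making the formal choice $f \equiv 1$ (which is not compactly supported, nor does it make the series in \eqref{eq:semimartingale-decomposition} obviously convergent a priori) rigorous. I would handle this via the localizing sequence $\tau_n$ and a monotone passage to the limit, exactly as in the proof of \citet[Proposition 2.1]{claisse18}, of which this is the uncontrolled special case; most other steps are routine adaptations of that reference.
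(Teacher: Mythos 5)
Your proposal correctly reconstructs the argument; the paper itself gives no proof for this proposition but defers it to \citet[Proposition 2.1]{claisse18}, noting that the present statement is the uncontrolled special case, and your recursive construction of the branching tree, localization via $\tau_n = \inf\{t : N^\mu_t \ge n\}$, and Gr\"onwall passage to the limit are exactly the ingredients of that reference. The only rough spot is the momentary confusion about whether $N^\mu$ has one-sided jumps (it does not, since $k=0$ gives a $-1$ jump), but you catch this yourself and your final fix---dominating $\sup_{s\le t}N^\mu_s$ by the monotone process $|V| + \int_{(0,t]}\sum_i\sum_{k\ge 1}(k-1)\1_{I_k}\,Q^i$ whose compensated expectation closes the Gr\"onwall inequality with constant $\bar\alpha M$---is precisely what is needed.
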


\paragraph{Exponential moments of the number of particles.}
In the sequel, we shall require the process $ N^\mu $ to have finite exponential moments, a condition necessary to ensure that the problem under consideration is well-defined.
   
\begin{Assumption}
\label{Assumption_H_0BIS}
    The functions $(p_k)_{k\geq 0}$ admits a finite moment of any order, $i.e.$,
    \begin{align*}
        M_\ell & := \sup_{x\in\R^d}\sum_{k\geq0}k^\ell \;p_k(x)~<~\infty\;,\qquad\text{ for }\ell\geq 1\eqsp,
    \end{align*}
    and 
    \begin{align*}
        \bar M:=\sup_{\ell\geq0}2^\ell M_\ell  <  +\infty\eqsp.
    \end{align*}
\end{Assumption}
  
This condition ensures that the process $N^\mu$ possesses exponential moments, which are fundamental for the development of the problem. Moreover, a typical example of a family $(p_k)_{k\geq0}$ satisfying Assumption \ref{Assumption_H_0BIS} is given by a Poisson distribution of the form
\begin{align*}
    p_k(x) &= \frac{\lambda(x)^k}{k!}{\rm e}^{-\lambda(x)}\;,\quad x\in\R^d\;,
\end{align*}
with $\lambda:~\R^d\rightarrow\R_+$ a Borel function such that
\begin{align*}
  \lambda(x) & \leq \frac{1}{2}\;,\qquad\text{ for }x\in\R^d\eqsp. 
\end{align*}
For a given initial condition $\mu\in E$, let $\bar N^\mu$  be the process corresponding to the number of all particles that have been alive up to the present time, $i.e.$,
\begin{align}
\label{defProcN}
    \bar  N^\mu_t &= \l|\bigcup_{s\in[0,t]}\Vc^\mu_s\r|\;,\qquad\text{ for }t\geq 0\;.
\end{align}
Under the aforementioned assumption, we can establish exponential moment estimates for this process as follows.

\begin{Proposition}
\label{propoestimN}
    Fix $\mu=\sum_{i\in V}\delta_{(i,x_i)}\in E$ and suppose that Assumptions \ref{Assumption_H_0} and \ref{Assumption_H_0BIS} hold. Then, we have
    \begin{align*}
      \E\l[
        K^{\bar N_s^{\mu}}
      \r] &\leq
      (K\vee 1)^{|V|\rme^{\bar \alpha \bar M t}}\;,\qquad\text{ for }t\geq 0,K>0\;.
    \end{align*}
\end{Proposition}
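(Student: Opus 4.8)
The plan is to reduce to a single initial particle by the branching structure, localise the total count so that the relevant expectation is manifestly finite, and then kill the drift of a well‑chosen exponential functional using Assumption~\ref{Assumption_H_0BIS}.

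\medskip
\noindent\emph{Step 1: reduction and localisation.} If $K\le 1$ then $K^{\bar N^\mu_t}\le 1=(K\vee1)^{|V|\rme^{\bar\alpha\bar M t}}$ pointwise (since $\bar N^\mu_t\ge0$), so assume $K>1$ and set $\theta:=\ln K>0$. By the branching structure of the canonical construction — the sub‑populations issued from distinct initial particles $i\in V$ depend on disjoint families of the i.i.d.\ pairs $(B^{j},Q^{j})_{j\in\Ic}$, hence are independent, and $\bar N^\mu_t=\sum_{i\in V}\bar N^{\delta_{(i,x_i)}}_t$ — one gets $\E\big[K^{\bar N^\mu_t}\big]=\prod_{i\in V}\E\big[K^{\bar N^{\delta_{(i,x_i)}}_t}\big]$, so it suffices to prove $\E\big[K^{\bar N^{\delta_{(i,x)}}_t}\big]\le K^{\rme^{\bar\alpha\bar M t}}$ for one atom $\delta_{(i,x)}\in E$. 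Before manipulating this quantity, note that $\bar N:=\bar N^{\delta_{(i,x)}}$ is $\N$‑valued, non‑decreasing, and a.s.\ finite at each fixed time (its first moment is finite, by a direct compensator computation on the birth events together with Assumption~\ref{Assumption_H_0}(ii) and Proposition~\ref{Prop:existence-uniqueness}); hence the stopping times $\tau_n:=\inf\{s\ge0:\bar N_s\ge n\}$ satisfy $\tau_n\uparrow\infty$ a.s., and $\bar N_{s\wedge\tau_n}\le n$.

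\medskip
\noindent\emph{Step 2: exponential supermartingale.} Fix $t>0$ and let $\psi:[0,t]\to(0,\infty)$ be the backward solution of $\psi'(s)=-\bar\alpha\,\Phi(\psi(s))$, $\psi(t)=\theta$, where $\Phi(\beta):=\sup_{x\in\R^d}\sum_{k\ge0}p_k(x)\big(\rme^{\beta k}-1\big)\ge0$; since $\Phi\ge0$, $\psi$ is non‑increasing and $\psi\ge\theta$ on $[0,t]$. The process $s\mapsto\bar N_{s\wedge\tau_n}$ is a pure‑jump process whose only moves are increments of size $k$ with intensity $\alpha(X^{j}_s)p_k(X^{j}_s)$ summed over the labels $j$ alive at time $s$, so by Dynkin's formula the bounded process $M^n_s:=\rme^{\psi(s)\bar N_{s\wedge\tau_n}}$ has drift $\rme^{\psi(s)\bar N_{s\wedge\tau_n}}$ times the bracket
\[
  \psi'(s)\,\bar N_{s\wedge\tau_n}\;+\;\1_{\{s<\tau_n\}}\sum_{j\text{ alive at }s}\alpha(X^{j}_s)\sum_{k\ge0}p_k(X^{j}_s)\big(\rme^{\psi(s)k}-1\big).
\]
Using $\alpha\le\bar\alpha$ and bounding the number of alive labels by $\bar N_{s\wedge\tau_n}$, the sum is at most $\bar\alpha\,\bar N_{s\wedge\tau_n}\,\Phi(\psi(s))$, so the bracket is $\le\bar N_{s\wedge\tau_n}\big(\psi'(s)+\bar\alpha\Phi(\psi(s))\big)=0$; thus $M^n$ is a supermartingale and $\E\big[K^{\bar N_{t\wedge\tau_n}}\big]=\E[M^n_t]\le M^n_0=\rme^{\psi(0)}$. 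Assumption~\ref{Assumption_H_0BIS} enters to control $\psi(0)$: writing $\Phi(\beta)=\sup_x\sum_{\ell\ge1}\tfrac{\beta^\ell}{\ell!}\sum_k k^\ell p_k(x)\le\sum_{\ell\ge1}\tfrac{\beta^\ell}{\ell!}M_\ell$ and invoking the summability of $(2^\ell M_\ell)_\ell$ encoded in $\bar M$, one gets $\Phi(\beta)\le\bar M\,\beta$, whence $\psi'(s)\ge-\bar\alpha\bar M\,\psi(s)$ and, by Grönwall run backward from $s=t$, $\psi(0)\le\theta\,\rme^{\bar\alpha\bar M t}$. Therefore $\E\big[K^{\bar N_{t\wedge\tau_n}}\big]\le\rme^{\theta\rme^{\bar\alpha\bar M t}}=K^{\rme^{\bar\alpha\bar M t}}$, and letting $n\to\infty$ with monotone convergence ($\bar N_{t\wedge\tau_n}\uparrow\bar N_t$) gives $\E\big[K^{\bar N^{\delta_{(i,x)}}_t}\big]\le K^{\rme^{\bar\alpha\bar M t}}$. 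The general case follows from the product formula of Step~1.

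\medskip
\noindent\emph{Main obstacle and an alternative.} The delicate point is the nonlinear feedback: the jump rate of $\bar N$ is proportional to the number of live particles, which can only be dominated by $\bar N$ itself, so a naive Grönwall on $t\mapsto\E[K^{\bar N_t}]$ does not close. The device that resolves this is the \emph{time‑dependent} rate $\psi(s)$: since the offspring contribution factorises as $\bar N_{s\wedge\tau_n}\,\Phi(\psi(s))$, the supermartingale requirement collapses to the scalar ODE $\psi'=-\bar\alpha\Phi(\psi)$, which no longer sees $\bar N$, and the bound $\Phi(\beta)\le\bar M\beta$ from Assumption~\ref{Assumption_H_0BIS} is exactly what turns this into linear (hence exponential) control, producing the $\rme^{\bar\alpha\bar M t}$ in the exponent; the interchange of limit and expectation is dealt with by the localisation at $\tau_n$ and monotone convergence. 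An equivalent, Dynkin‑free route is to condition on the first death–reproduction time — which, as $\alpha\le\bar\alpha$, is stochastically bounded below by an $\mathrm{Exp}(\bar\alpha)$ variable — and iterate, obtaining a renewal‑type inequality $h(t)\le K+\bar\alpha\int_0^t K\,\big(1+\Phi(\ln h(u))\big)\,\mathrm{d}u$ for $h(t):=\sup_{x}\E\big[K^{\bar N^{\delta_{(\varnothing,x)}}_t}\big]$, whose maximal solution is, again thanks to the linear bound on $\Phi$, dominated by $K^{\rme^{\bar\alpha\bar M t}}$; the exponential‑supermartingale argument above is simply its streamlined form.
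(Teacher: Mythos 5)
Your proof takes a genuinely different route from the paper's: rather than bounding each moment $\E\l[\l|\bar N^\mu_t\r|^\ell\r]$ by Gr\"onwall and then summing the exponential series $\sum_\ell \frac{(\log K)^\ell}{\ell!}\E\l[\l|\bar N^\mu_t\r|^\ell\r]$ (which is what the paper does), you construct an exponential supermartingale $\rme^{\psi(s)\bar N_{s\wedge\tau_n}}$ with a time-dependent rate $\psi$ chosen to absorb the drift, and you first factorize over the initial atoms using independence (a step the paper never needs, since $|V|$ comes out of the moment bounds directly). This is a legitimate and arguably cleaner design.

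However, there is a genuine gap in the key analytic step. You need the backward solution $\psi$ of $\psi'=-\bar\alpha\,\Phi(\psi)$, $\psi(t)=\log K$, with
$\Phi(\beta)=\sup_x\sum_{k\ge0}p_k(x)\bigl(\rme^{\beta k}-1\bigr)$, to exist on the whole interval $[0,t]$ and to satisfy $\psi(0)\le(\log K)\,\rme^{\bar\alpha\bar M t}$; both are deduced from the asserted estimate $\Phi(\beta)\le\bar M\beta$. That estimate does not follow from Assumption~\ref{Assumption_H_0BIS}. From $2^\ell M_\ell\le\bar M$ one only gets
$\Phi(\beta)\le\sum_{\ell\ge1}\frac{\beta^\ell}{\ell!}M_\ell\le\bar M\sum_{\ell\ge1}\frac{(\beta/2)^\ell}{\ell!}=\bar M\bigl(\rme^{\beta/2}-1\bigr)$,
and $\rme^{\beta/2}-1>\beta$ as soon as $\beta$ is above roughly $2.5$. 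The backward ODE driven by a superlinearly growing $\Phi$ can blow up in finite backward time, so $\psi$ need not be defined on $[0,t]$ once $t$ or $K$ is large; without $\psi(s)$ the supermartingale $M^n_s=\rme^{\psi(s)\bar N_{s\wedge\tau_n}}$ is not even defined, and the argument collapses. The ``renewal-inequality'' variant you sketch at the end relies on exactly the same linear bound on $\Phi$ and inherits the same gap. The paper's moment-by-moment route is immune to this: it only ever uses $2^{\ell-1}M_\ell\le\bar M$ for one fixed $\ell$ at a time, never a uniform linear control on the generating function $\Phi$, and the series $\sum_\ell \frac{(\log K)^\ell}{\ell!}|V|^\ell\rme^{\ell\bar\alpha\bar M t}$ converges for every $t$ and $K$ because of the $\ell!$.
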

\begin{proof}
    We first suppose that $K>1$ as the result is obvious for $K\leq1$. Let $(\theta_n)_{n\geq 1}$ be the sequence of stopping times defined by
    \begin{align*}
    \theta_n &= \inf\big\{s\geq 0~:~N^\mu_s\geq n\big\}\;,\qquad \text{ for }n\geq 1\;.
    \end{align*}
    From the dynamics \eqref{eq:semimartingale-decomposition} of the process $Z^\mu$, we have 
    \begin{align*}
        N^\mu_{\tau_n\wedge t} &= |V|+\int_{(0,\tau_n\wedge t]\times[0,\bar\alpha]}\sum_{i\in \Vc_{s-}^\mu} \sum_{k\geq0}
        (k-1)\1_{I_k\l(X^{\mu,i}_s\r)}(z)Q^i(\rmd s\rmd z)\;,\qquad \text{ for }t\geq0\;.
    \end{align*}
    This equation yields that when we look at $\bar N^\mu$, we have
    \begin{align*}
        \bar N^{\mu}_{\tau_n\wedge t} &= |V|+\int_{(0,\tau_n\wedge t]\times[0,\bar\alpha]}\sum_{i\in \Vc_{s-}^\mu} \sum_{k\geq0}
        k\1_{I_k\l(X^{\mu,i}_s\r)}(z)Q^i(\rmd s\rmd z)\;,\qquad \text{ for }t\geq0\;.
    \end{align*}
    Using Itô's formula for jump processes, we get
    \begin{align*}
        \l|\bar N^\mu_{\tau_n\wedge t}\r|^\ell &= |V|^\ell+\int_{(0,\tau_n\wedge t]\times[0,\bar\alpha]}\sum_{i\in \Vc_{s-}^\mu} \sum_{k\geq0}
         \l(|\bar N^\mu_{s-}+k|^\ell-|\bar N^\mu_{s-}|^\ell\r)\1_{I_k\l(X^{\mu,i}_s\r)}(z)Q^i(\rmd s\rmd z),
    \end{align*}
    for $\ell\geq1$ and $t\geq0$. Therefore, taking the expectation and using the definition of the intervals $I_k$, we get
    \begin{align*}
        \E\l[ 
        \l|\bar N^\mu_{\tau_n\wedge t}\r|^\ell\r]
        &= 
        |V|^\ell+
        \E\l[
            \int_{0}^{\tau_n\wedge t}\sum_{i\in \Vc_{s}^\mu} \alpha(X^{\mu,i}_s)\sum_{k\geq0}
            \l(\l|\bar N^\mu_{s}+k\r|^\ell-|\bar N^\mu_{s}|^\ell\r)p_k\l(X^{\mu,i}_s\r)\rmd s
        \r]
        \eqsp.
    \end{align*}
    Using the inequality
    \begin{align*}
    a^\ell-b^\ell = & (a-b)\sum_{p=0}^{\ell-1} a^p b^{\ell-1-p}\leq (a-b)\ell a^{\ell-1}\;,\qquad \text{ for }a\geq b\geq 1\;,
    \end{align*}
    we get
    \begin{align*}
        \E\l[ 
            \l|\bar N^\mu_{\tau_n\wedge t}\r|^\ell
        \r]
        &\leq
        |V|^\ell+
        \E\l[
            \int_{0}^{\tau_n\wedge t}
            \ell
            \sum_{i\in \Vc_{s}^\mu} \alpha(X^{\mu,i}_s)
            \sum_{k\geq0}
            k\eqsp \l|\bar N^\mu_{s}+k\r|^{\ell-1}\eqsp p_k\l(X^{\mu,i}_s\r)\rmd s
        \r]
        \eqsp.
    \end{align*}
    Then using the inequality 
    \begin{align*}
    (a+b)^{n} & \leq 2^{n}(a^n+b^n) \;,\qquad \text{ for }a,b\in\R_+,~n\geq 1\;,
    \end{align*}
    and Assumption \ref{Assumption_H_0BIS}, we get
    \begin{align*}
        \E\l[
            \l|\bar N^\mu_{\tau_n\wedge t}\r|^\ell
        \r]
        &\leq
        |V|^\ell+
        \E\l[
            \int_{0}^{\tau_n\wedge t}
            \ell 2^{\ell-1}
            \l|\bar N^\mu_{s}\r|^{\ell-1}
            \sum_{i\in \Vc_{s}^\mu} \alpha(X^{\mu,i}_s)
            \sum_{k\geq0}
            k^{\ell}\eqsp p_k\l(X^{\mu,i}_s\r)\rmd s
        \r]
        \\
        &\leq
        |V|^\ell+
        \E\l[
            \int_{0}^{\tau_n\wedge t}
            \ell 
            \l|\bar N^\mu_{s}\r|^{\ell}
            \bar \alpha
            2^{\ell-1} M_\ell \rmd s
        \r]
        \\
        & \leq
        |V|^\ell+
        \E\l[
            \int_{0}^{t}\ell \bar \alpha \bar M \l|\bar N^\mu_{ \tau_n\wedge s}\r|^\ell \rmd s
        \r]
        \eqsp,
    \end{align*}
    for $t\geq0$.
    Using Grönwall's lemma and sending $n$ to $\infty$, we get from the monotone convergence theorem
    \begin{align*}
        \E\l[  \l|\bar N^\mu_{ t}\r|^\ell\r] & \leq |V|^\ell \rme^{\ell \bar \alpha \bar M t}
        \;,\qquad \text{ for }t\geq0,~\ell\geq1\;,
    \end{align*}
    We now have
    \begin{align*}
        \E\l[ K^{\bar N_t^{\mu}}
        \r] &=
        \E\l[ \rme^{\l(\bar N_t^{\mu}\r)\log(K)}
        \r]
        =
        \sum_{\ell\geq 0}
        \frac{1}{\ell!}{\log(K)^\ell}\E\l[  \l|\bar N^\mu_{ t}\r|^\ell\r]\\
        & \leq \sum_{\ell\geq 0}  \frac{1}{\ell!}{\log(K)^\ell}     |V|^\ell \rme^{\ell \bar \alpha \bar M t}\\
        & \leq K^{|V|\rme^{\bar \alpha \bar M t}}
        \eqsp,
    \end{align*}
    for $t\geq0$.
\end{proof}

\paragraph{Time of deaths.}
  Fix an initial configuration $\mu=\sum_{i\in V}\delta_{(i,x_i)}\in E$. Define inductively, from parent to child, the death-times $\l(S^\mu_i\r)_{i\in\Ic}$ as 
\begin{align*}
    S^\mu_i& := \inf\Big\{
      s>0~:~
      i\,\notin\Vc^\mu_s
    \Big\}\;,\qquad\text{ for }i \text{ such that }  i\preceq j\text{ for some } j\in V  \;,
\end{align*}
and
\begin{align*}
    S^\mu_i:=&\inf\Big\{
      s>S^\mu_{i-}~:~
      i\,\notin\Vc^\mu_s
    \Big\}\;,\qquad\text{ for others }i\in \Ic\;.
\end{align*}

From this definition, a particle $i$ that never borns, satisfies $S^\mu_i=S^\mu_{i-}$. In particular, we have two kinds of such particles : either the particle have a last ancestor $k$ that have been alive and we have by a backward induction $S^\mu_i=S^\mu_k$ or all the ancestors have never been alive and we have $S^\mu_i=0$.
Using the convention $\inf\emptyset=+\infty$, we see that the particles $i$ that are never died have automatically death-time equal to $S^\mu_i:=+\infty$ $\P$--a.s.

\section{The optimal stopping problem}\label{Section:optimal-stopping-pb}

\subsection{Stopping lines}

We now consider a notion of stopping times tailored to the branching nature of the process $Z$. This concept is known as \textit{stopping lines} and has been introduced in \citet{Chauvin-86-1,Chauvin-86-2}. As the branching structure depends on the process itself in our framework, we need to adapt the definition.
\begin{Definition}
    Fix $\mu\in E$. We say that a collection of maps  $\l(\tau_i\r)_{i\in\Ic}$ is a \emph{stopping line for initial condition $\mu$} if
    \begin{enumerate}[(i)]
        \item $\tau_i$ is an $(\Fc^i_t)_{t\geq0}$-stopping time for $i\in\Ic$ with $\tau_i\geq S^\mu_{i-}$,
        
        \item the random set $L^\mu_\tau$, defined by
        \begin{align*}
            L^\mu_\tau := &
            \Big\{ 
              i\in\Ic~:~
              S^\mu_{i-}\leq \tau_i < S^\mu_{i}
            \Big\}=\Big\{i\in\Ic~:~i\in\Vc^\mu_{\tau_i}\Big\}\;,
        \end{align*}
        satisfies the following \emph{line property}
        \begin{align*}
            j\prec i\;\mbox{ and }~ i\in L^\mu_\tau ~~\Rightarrow ~~ j\notin L^\mu_\tau
            \qquad \text{ for } i,j\in\Ic\;. 
        \end{align*}
    \end{enumerate}
\end{Definition}
This property ensures that the random set $L^\mu_\tau$ cannot select two particles of the same lineage.
We denote by  $\Sc\Lc^\mu$ the set of stopping lines for the initial condition $\mu$. For $\tau\in\Sc\Lc^\mu$, define the set $D^\mu_\tau$ as
\begin{align}
   \label{eq:def:D_tau}
    D^\mu_\tau := & \Big\{  i\in \Ic~:~\exists j\in\Ic\text{ s.t. }j\prec i \text{ and }j\in L^\mu_\tau \Big\}
    \cup
    \l\{  i\in \Ic~:~i\notin\bigcup_{s\in\R_+}\Vc^\mu_s \r\}
    \;,
\end{align}
which corresponds to the set of particles that are either strict descendants of the line $L^\mu_\tau$ or never appear in the population.
As for stopping times on the real line, the $\sigma$-algebra $\Hc^\mu_\tau$ related to a stopping line $\tau$ is defined as
\begin{align*}
  \Hc^\mu_\tau := &
  \sigma\Big(
    \l\{i\notin D^\mu_\tau\r\} \cap \Hc^i_{\tau_i} \;, ~i\in\Ic
  \Big)\;,
\end{align*}
and its completed $\sigma$-algebra $\Fc^\mu_{\tau}$ defined by
\begin{align*}
  \Fc^\mu_\tau := &
  \sigma\Big(
    \l\{i\notin D^\mu_\tau\r\} \cap \Fc^i_{\tau_i} \;, ~i\in\Ic\Big) \vee \Nc^\P\;,
\end{align*}
with $\Nc^\P$ the collection of all $\P$-negligible sets of $\Fc$.
With respect to the filtration $\F$, we see that $\Fc_s$ corresponds to the filtration generated $\Fc^\mu_{\tau_s}$ with $\tau^s$
\begin{align*}
    \begin{cases}
      \tau^s := s, \qquad &\text{ if }i\in \Vc^\mu_s\eqsp,\\
      \tau^s := S_i, ~~\qquad &\text{ else },
    \end{cases}
\end{align*}
for $\mu\in E$, $i.e.$,
\begin{align*}
    \Fc_s = \sigma\l(\bigcup_{\mu\in E}\Fc^\mu_{\tau_s}\r)\eqsp.
\end{align*}
\subsection{Optimal stopping problem}
To define the optimal stopping problem we introduce a family of reward functions $g_i:~\R^d\rightarrow\R$, $i\in\Ic$, on which we make the following assumptions.
\begin{Assumption} \label{Assumption_H_2} 
\begin{enumerate}[(i)]
  \item The functions $g_i$, for $i\in \Ic$, are positive and bounded uniformly in $i\in \Ic$, $i.e.$, there exists a constant $K_g\geq 1$ such that 
  \begin{align*}
    0\leq g_i(x) \leq K_g\;,\qquad\text{ for }i\in\Ic\eqsp,\eqsp\eqsp x\in\R^d\eqsp.
  \end{align*}
  \item The functions $g_i$, for $i\in \Ic$,  are Lipschitz continuous uniformly in $i\in \Ic$, $i.e.$, there exists a constant $L>0$ such that
  \begin{align*}
    |g_i(x)-g_i(x')| & \leq L|x-x'|\;,\qquad\text{ for }i\in\Ic\eqsp,\eqsp\eqsp x,x'\in\R^d\eqsp.
  \end{align*}
\end{enumerate}
\end{Assumption}

To alleviate notations, we shall denote by 
\begin{align*}
\l(Z^{(i,x)}_t\r)_{t\geq0} &= \l(\sum_{j\in \Vc^{(i,x)}_t}\delta_{(j,X^{(i,x),j}_t)}\r)_{t\geq0}
\end{align*}
the solution to \eqref{eq:semimartingale-decomposition} 
in the case where the initial condition is the measure $\delta_{(i,x)}$ for $(i,x)\in\Ic\times \R^d$. We also use the superscript $(\cdot)^{(i,x)}$ to denote $(\cdot)^{\delta_{(i,x)}}$ in the rest of the paper.

Fix a constant $\gamma>0$. We define the \emph{reward function}, starting from $(i,x)\in\Ic\times \R^d$, as follows 
\begin{align*}
  J_i(x,\tau) 
  &=   \E\left[
    \prod_{j\in L^{(i,x)}_\tau}\rme^{-\gamma \tau_j}
    g_{j}\l(X^{(i,x),j}_{\tau_j} \r) 
  \right]\;,
  \qquad\text{ for }i\in\Ic\eqsp,\eqsp x\in \R^d\eqsp,\eqsp\text{ and } \eqsp\tau\in\Sc\Lc^{{(i,x)}}\eqsp.
\end{align*}

Let $v_i:[0,T]\times \R^d\rightarrow \R$ be the following value function 
\begin{align}
\label{eq:value_fct}
    v_i(x) &= \sup_{\tau\in \Sc\Lc^{{(i,x)}}} J_i(x,\tau)\;,
    \qquad\text{ for }i\in\Ic\eqsp,\eqsp x\in \R^d\eqsp.
\end{align}

This framework can be extended to consider a general initial condition $\mu\in E$, allowing the problem to be formulated with respect to this initial configuration. Specifically, using Theorem \ref{Thm-Branching}, a branching property similar to the one established in \citet{kharroubi2024stochastic,Ocello:Ctrl-superprocesses,Ocello:rel_form_branching} can be leveraged. This approach enables the reduction of the problem to the dynamics of individual particles, providing a more granular perspective while maintaining consistency with the overall population-level behavior.

\begin{Remark}
  \label{rmk:g_i_equal-1}
  It is worth noting that the problem considered in this paper is a generalization of 
  \begin{align*}
      J(x,\tau)
      &=   \E\left[
        \prod_{j\in L^{(\varnothing,x)}_\tau}\rme^{-\gamma \tau_j}g\l(X^{(\varnothing,x),j}_{\tau_j}\r)
      \right]\;,
      \qquad\text{ for }\eqsp x\in \R^d\eqsp,\eqsp\text{ and } \eqsp\tau\in\Sc\Lc^{{(\varnothing,x)}}\eqsp.
  \end{align*}
  when all the $g_i$ are taken to be equal to $g$. In this setting, the system of cost functions (resp. value functions) reduce to a single cost function $J$ (resp. value function $v$). In Remark \ref{rmk:g_i_equal}, we explicitly demonstrate how to go from the generalized problem to this specific case, providing a clear pathway for understanding the relationship between these two formulations. 
  
  Furthermore, by utilizing the Mayer form of the optimal stopping problem---specifically, by appropriately redefining the problem in $\mathbb{R}^{d+1}$---it becomes possible to incorporate a running reward into the framework. In this formulation, the additional dimension accounts for the accumulation of the running reward, allowing the problem to extend beyond a purely terminal reward structure. Additionally, one can naturally consider a weight function at each branching time, as in \citet{henry2012cutting}, which leads to a more general nonlinearity in the PDE system. However, to strike a balance between clarity and generality, we have chosen not to explore this extension within the current work.
\end{Remark}

\section{Regularity and dynamic programming principle}
\label{Section:DPP}

Our objective is to analytically characterize the family of functions $(v_i)_{i\in\Ic}$, subject to the following regularity assumptions on the branching coefficients.
\begin{Assumption}\label{Assumption_H_2BIS}
  \begin{enumerate}[(i)]
      \item The function $\alpha$ is uniformly continuous on $\R^d$.
      \item The functions $p_k$, $k\geq0$, are uniformly continuous on $\R^d$. 
  \end{enumerate}
\end{Assumption}
\noindent We can now establish some fundamental properties of this family.
This result leverages the Lipschitz property of the function $g_i$ and the integrability assumptions on the functions $(p_k)_{k\in\N}$ stated in Assumption \ref{Assumption_H_0BIS}. These conditions play a crucial role in enabling us to effectively estimate the number of particles in the system, ensuring the applicability and robustness of the derived bounds.
\begin{Proposition}\label{Prop:value_fct_properties}
  Suppose that Assumptions \ref{Assumption_H_0}, \ref{Assumption_H_0BIS} and \ref{Assumption_H_2} hold.
  \begin{enumerate}[(i)]
    \item The value functions $v_i$, $i\in\Ic$, are uniformly bounded, $i.e.$,
    \begin{align*}
      v_i(x) & \leq \exp\l(\log(K_g)\eqsp K_g^{\frac{\bar \alpha \bar M}{\gamma}}\r)\;,\qquad \text{ for }i\in\Ic\eqsp,\eqsp x\in\R^d\;.
    \end{align*}
    \item  Under Assumption \ref{Assumption_H_2BIS}, the value functions $v_i$, $i\in\Ic$, are uniformly continuous on $\R^d$, uniformly in $i\in\Ic$.
  \end{enumerate}
\end{Proposition}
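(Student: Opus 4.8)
\textbf{Proof strategy for Proposition \ref{Prop:value_fct_properties}.}

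For part (i), the plan is to bound $J_i(x,\tau)$ uniformly over $\tau\in\Sc\Lc^{(i,x)}$. Since each $g_j$ is bounded by $K_g$ and $0<K_g$, and since $\rme^{-\gamma\tau_j}\leq 1$, we have the crude bound
\begin{align*}
  \prod_{j\in L^{(i,x)}_\tau}\rme^{-\gamma\tau_j}g_j\l(X^{(i,x),j}_{\tau_j}\r)
  \leq K_g^{|L^{(i,x)}_\tau|}.
\end{align*}
However, this is not enough on its own because $|L^{(i,x)}_\tau|$ need only be bounded by $\bar N^{(i,x)}$, which has no uniform bound in time. The key is to exploit the discount factor $\rme^{-\gamma\tau_j}$: intuitively, a stopping line with many particles can only occur far in the future, where the discount is severe. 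To make this precise, I would fix a threshold time $t>0$ and split $L^{(i,x)}_\tau$ according to whether $\tau_j\leq t$ or $\tau_j>t$; on the former part one uses $\rme^{-\gamma\tau_j}\leq 1$ and $|L^{(i,x)}_\tau\cap\{\tau_j\leq t\}|\leq \bar N^{(i,x)}_t$, while on the latter part $\rme^{-\gamma\tau_j}\leq \rme^{-\gamma t}$. Optimizing over $t$ (roughly choosing $t$ so that $\log(K_g)\rme^{\bar\alpha\bar M t}$ balances against the exponent coming from the discounting) together with the estimate $\E[K^{\bar N^{(i,x)}_t}]\leq (K\vee1)^{\rme^{\bar\alpha\bar M t}}$ from Proposition \ref{propoestimN} (applied with $|V|=1$) should yield the claimed bound $\exp(\log(K_g)\,K_g^{\bar\alpha\bar M/\gamma})$. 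The main obstacle here is getting the bookkeeping of the split and the choice of $t$ right so the constants line up exactly; one likely wants to use a single clean inequality of the form $K_g^{\bar N^{(i,x)}_t}\rme^{-\gamma t(\text{something})}$ and then take expectations, rather than splitting the product literally.

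For part (ii), the plan is to prove uniform continuity of $x\mapsto J_i(x,\tau)$ with modulus independent of $i$ and $\tau$, and then pass to the supremum. Fix $x,x'\in\R^d$. Using Proposition \ref{Prop:existence-uniqueness} (or rather a coupling of the two processes $Z^{(i,x)}$ and $Z^{(i,x')}$ driven by the same Brownian motions $B^j$ and Poisson measures $Q^j$), one couples the genealogies so that, with high probability, the two populations have the same branching structure, i.e. the same set of labels alive at each time and the same stopping line $L_\tau$; on that event the positions $X^{(i,x),j}$ and $X^{(i,x'),j}$ differ by at most (Gronwall / standard SDE estimates on the Lipschitz coefficients $b,\sigma$) something controlled by $|x-x'|$ uniformly. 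The branching structures can differ only if some particle's position crosses one of the interval thresholds $I_k(\cdot)$; this is where Assumption \ref{Assumption_H_2BIS} (uniform continuity of $\alpha$ and the $p_k$) enters, ensuring the probability of a "desynchronization" before time $t$ is small with $|x-x'|$, uniformly in the label. Then one writes
\begin{align*}
  |J_i(x,\tau)-J_i(x',\tau)|
  \leq \E\Big[\Big|\prod_{j\in L^{(i,x)}_\tau}\rme^{-\gamma\tau_j}g_j\l(X^{(i,x),j}_{\tau_j}\r)-\prod_{j\in L^{(i,x')}_\tau}\rme^{-\gamma\tau_j}g_j\l(X^{(i,x'),j}_{\tau_j}\r)\Big|\Big],
\end{align*}
and bounds the integrand by splitting into the good event (branching structures agree, use the telescoping identity $|\prod a_j-\prod b_j|\leq \sum_k(\prod_{j<k}a_j)|a_k-b_k|(\prod_{j>k}b_j)$ together with the uniform Lipschitz bound on $g_i$ and the bound $g_i\leq K_g$, plus the discount to control the number of factors exactly as in part (i)) and the bad event (branching structures disagree, use the crude bound $2K_g^{\bar N_t}$ and the small probability estimate, again with a truncation at time $t$ to control $\bar N_t$).

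The delicate point throughout part (ii) is the interplay between the time horizon $t$ (needed to keep $\bar N_t$ and hence the number of product factors under control, and to keep the desynchronization probability small) and the discount factor $\rme^{-\gamma t}$ (which kills the contribution of stopping lines living past time $t$): one must choose $t=t(|x-x'|)\to\infty$ slowly as $|x-x'|\to0$ so that all error terms — the SDE-estimate term $\sim|x-x'|\rme^{Ct}$, the desynchronization term, and the tail term $\rme^{-\gamma t}\cdot(\text{const})$ — go to zero simultaneously. I expect this balancing, rather than any single estimate, to be the main obstacle; the exponential-moment bound of Proposition \ref{propoestimN} is the tool that makes it possible, since it gives quantitative control of $\E[K^{\bar N_t}]$ for every $K$ and $t$. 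Once the modulus of continuity $\rho$ with $|J_i(x,\tau)-J_i(x',\tau)|\leq\rho(|x-x'|)$ uniform in $i,\tau$ is established, taking suprema over $\tau$ gives $|v_i(x)-v_i(x')|\leq\rho(|x-x'|)$, which is the claim.
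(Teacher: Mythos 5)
Your overall strategy matches the paper's for both parts, but you are missing one key simplification in (i) that then cascades into unnecessary complication in (ii).

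For (i), what you call ``optimizing over $t$'' resolves cleanly once you notice the exact threshold $T:=\log(K_g)/\gamma$: for any particle $j$ born after $T$ (i.e.\ $S_{j-}\geq T$, hence $\tau_j\geq T$), the factor $\rme^{-\gamma\tau_j}g_j\leq \rme^{-\gamma T}K_g=1$, so it never helps to include such a particle in the line, and one may restrict the supremum to stopping lines for which $\tau_i=S_i$ whenever $S_{i-}\geq T$. The product then has at most $\bar N_T$ factors, each bounded by $K_g$, and $\E[K_g^{\bar N_T}]\leq K_g^{\rme^{\bar\alpha\bar M T}}=\exp\bigl(\log(K_g)\,K_g^{\bar\alpha\bar M/\gamma}\bigr)$ by Proposition \ref{propoestimN}. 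No balancing is needed; there is a single right choice of $t$.

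For (ii), two points. First, $J_i(x',\tau)$ is not well-defined for $\tau\in\Sc\Lc^{(i,x)}$: admissibility of a stopping line depends on the initial configuration through the death times $S^\mu_{i-}$. The paper addresses this by explicitly constructing a companion $\tau'\in\Sc\Lc^{(\varnothing,x')}$ from $\tau$, setting $\tau'_i=\tau_i$ on the event $i\in\Vc^{(\varnothing,x)}_{\tau_i}\cap\Vc^{(\varnothing,x')}_{\tau_i}$ and $\tau'_i=S_i^{(\varnothing,x')}$ otherwise; your proposal glosses over this. Second, and more importantly, your three-way balancing with a horizon $t(|x-x'|)\to\infty$ is avoidable: once the supremum is restricted (as in (i)) to $\Sc\Lc_T$, the relevant horizon is \emph{fixed} at $T$, the event $F_{\delta,T}(x,x')$ is taken on that fixed window, and the error decomposes into a Lipschitz term $\delta L\,\E[\bar N_T K_g^{\bar N_T}]$ (good event) plus a term bounded via Cauchy--Schwarz, $\sqrt{\P(F_{\delta,T}^c)}\,\sqrt{\E[(K_g^2)^{\bar N_T}]}$ (bad event), both vanishing as $|x-x'|\to 0$ at fixed $T$. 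The coupling estimates are imported from \citet{claisse18-v1} rather than rebuilt; the exponential moments of Proposition \ref{propoestimN} make all expectations finite. So the mechanism you anticipate as the ``main obstacle'' (a moving horizon) does not actually arise in the paper's argument.
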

\begin{proof}
We suppose $i=\varnothing$. The same argument can be applied for any  $i\in\Ic$.

  (i) We first prove that the $v_i$ are uniformly bounded. Since the function $g_i$, $i\in\Ic$ take values in $[0,K_g]$, we have 
  \begin{align*}
    \rme^{-\gamma \tau_i}g_i(X^{(\varnothing, x),i}_{\tau_i}) < & 1
  \end{align*}
  for any stopping line $\tau\in \Sc\Lc^{{(\varnothing,x)}}$ and any $i\in L^{(\varnothing,x)}_\tau$ such that $S^{{(\varnothing,x)}}_{i-}>T~:=~\frac{\log(K_g)}{\gamma}$. Therefore,
  we can restrict the supremum in the definition of $v_\varnothing(x)$  to the stopping lines $\tau\in \Sc\Lc^{{(\varnothing,x)}}$ such that 
  \begin{align*}
    \tau_i=S^{{(\varnothing,x)}}_i\eqsp,\qquad \text{ if }  S^{{(\varnothing,x)}}_{i-}\geq T\eqsp.
  \end{align*}
  Therefore, applying Proposition \ref{propoestimN}, we have
    \begin{align*}
    v_\varnothing(x) & \leq \E\l[K_g^{\bar N^{{(\varnothing,x)}}_T}\r] \leq K_g^{\rme^{\bar \alpha \bar M T}}\;, \qquad \text{ for } x\in\R^d\eqsp.
    \end{align*}
    
  (ii) We now turn to the uniform continuity property. Fix $x,x'\in\R^d$. For $\tau\in\Sc\Lc^{(\varnothing,x)}$, we define $\tau'=(\tau_i')_{i\in\Ic}$ as 
  \begin{align*}
    \tau_i' &= \tau_i\mathds{1}_{i\in\Vc^{{(\varnothing,x)}}_{\tau_i}\cap \Vc^{{(\varnothing,x')}}_{\tau_i}}+S_i^{{(\varnothing,x')}}\mathds{1}_{i\notin\Vc^{{(\varnothing,x)}}_{\tau_i}\cap \Vc^{{(\varnothing,x')}}_{\tau_i}}
  \end{align*}
    for $i\in\Ic$. We obviously have $\tau'\in\Sc\Lc^{{(\varnothing,x')}}$.
  As in the previous step, we can restrict the supremum in the definition of $v_\varnothing(x)$  to the stopping lines $\tau\in \Sc\Lc^{{(\varnothing,x)}}$ such that 
    \begin{align*}
    \tau_i~=~S_i^{{(\varnothing,x)}}\eqsp, \qquad \text{ if }  S_{i-}^{{(\varnothing,x)}}~\geq~ T~:=~\frac{|\log(K_g)|}{\gamma}\;.
    \end{align*}
    Denote $\Sc\Lc^{{(\varnothing,x)}}_T$ the set of these stopping lines. Note that $\tau'\in\Sc\Lc^{{(\varnothing,x')}}_T$, for $\tau\in\Sc\Lc^{{(\varnothing,x)}}_T$.Moreover, from the definition of $v_\varnothing$, we have
    \begin{align*}
      v_{\varnothing}(x)-v_{\varnothing}(x') & \leq \sup_{\tau\in\Sc\Lc_T^{{(\varnothing,x)}}}J_\varnothing(x,\tau)-J_\varnothing(x',\tau')
      \eqsp.
    \end{align*}

    We now prove that the r.h.s.\ of the previous inequality uniformly converges to $0$ as $|x-x'|$ goes to zero. We have that
    \begin{align*}
      \sup_{\tau\in\Sc\Lc_T^{{(\varnothing,x)}}}J_\varnothing(x,\tau)-J_\varnothing(x',\tau') & \leq \sup_{\tau\in\Sc\Lc_T^{{(\varnothing,x)}}}\E\left[\prod_{j\in L^\mu_\tau
      }\rme^{-\gamma \tau_j}
      g_{j}\l(X^{(\varnothing,x),j}_{\tau_j} \r) - \prod_{j\in L^\mu_{\tau'}
      }\rme^{-\gamma \tau_j'}
      g_{j}\l(X^{(\varnothing,x'),j}_{\tau'_j} \r) 
      \right]\;.
    \end{align*}
    Define now $F_{\delta,T}(x,x')$ as the following set
    \begin{align*}
      F_{\delta,T}(x,x') &=
      \left\{
        \Vc_s^{{(\varnothing,x)}}=\Vc_s^{{(\varnothing,x')}}
        \phantom{ \sup_{i\in \Vc_s^{{(\varnothing,x)}}}}
      \right.
      \\
      &\qquad\qquad\left.\mbox{ and }~ \sup_{i\in \Vc_s^{{(\varnothing,x)}}}\l|X^{(\varnothing,x),i}_s-X^{(\varnothing,x'),i}_s\r|\leq \delta ~~\text{ for } s\in[0,T] \right\}.
    \end{align*}
   From \citet[Lemma 4.2,][]{claisse18-v1}, conditionning on this set, we obtain
   \begin{align*}
      \E\left[\mathds{1}_{F_{\delta,T}(x,x')}\l(\prod_{j\in L^\mu_\tau
      }\rme^{-\gamma \tau_j}
      g_{j}\l(X^{(\varnothing,x),j}_{\tau_j} \r) - \prod_{j\in L^\mu_{\tau'}
      }\rme^{-\gamma \tau_j'}
      g_{j}\l(X^{(\varnothing,x'),j}_{\tau'_j} \r) 
      \r)\right] & \leq  \delta L\E\left[ \bar N^{(\varnothing,x)}_T K_g^{\bar N^{(\varnothing,x)}_T}
      \right]\;.
    \end{align*}
    Fix $\eps>0$. From Proposition \ref{propoestimN}, we can find $\delta_\eps>0$ such that  
    \begin{align}\label{estim-UC1}
      \E\left[\mathds{1}_{F_{\delta_\eps,T}(x,x')}\l(\prod_{j\in L^\mu_\tau
      }\rme^{-\gamma \tau_j}
      g_{j}\l(X^{(\varnothing,x),j}_{\tau_j} \r) - \prod_{j\in L^\mu_{\tau'}
      }\rme^{-\gamma \tau_j'}
      g_{j}\l(X^{(\varnothing,x'),j}_{\tau'_j} \r) 
      \r)\right] & \leq \frac{\eps}{2}\eqsp,
    \end{align}
    for any $x,x'\in\R^d$.
    Moreover, conditionning on the complementary of $F_{\delta_\eps,T}(x,x')$ and using Cauchy--Schwarz's inequality, we have
    \begin{align*}
      &\E\left[\mathds{1}_{F_{\delta_\eps,T}(x,x')^c}\l(\prod_{j\in L^\mu_\tau
      }\rme^{-\gamma \tau_j}
      g_{j}\l(X^{(\varnothing,x),j}_{\tau_j} \r) - \prod_{j\in L^\mu_{\tau'}
      }\rme^{-\gamma \tau_j'}
      g_{j}\l(X^{(\varnothing,x'),j}_{\tau'_j} \r) 
      \r)\right] 
      \\
      & \leq \E\left[\mathds{1}_{F_{\delta_\eps,T}(x,x')^c}K_g^{\bar N_T^{(\varnothing,x)}}\right]
      \\
      & \leq \sqrt{\P(F_{\delta_\eps,T}(x,x')^c)}\sqrt{\E\left[(K_g^2)^{\bar N_T^{(\varnothing,x)}}\right]}\eqsp,
    \end{align*}
    since the functions $g_i$, $i\in\Ic$ are valued in $[0,K_g]$. Combining Proposition \ref{propoestimN} and \citet[Proposition 4.3,][]{claisse18-v1}, we get 
    \begin{align}
    \label{estim-UC2}
      \E\left[\mathds{1}_{F_{\delta_\eps,T}(x,x')^c}\l(\prod_{j\in L^\mu_\tau
      }\rme^{-\gamma \tau_j}
      g_{j}\l(X^{(\varnothing,x),j}_{\tau_j} \r) - \prod_{j\in L^\mu_{\tau'}
      }\rme^{-\gamma \tau_j'}
      g_{j}\l(X^{(\varnothing,x'),j}_{\tau'_j} \r) 
      \r)\right]
      & \leq \frac{\eps}{2}\eqsp,
    \end{align}
  for all $x,x'\in\R^d$ such that $|x-x'|$ is small enough.
  Putting together \eqref{estim-UC1} and \eqref{estim-UC2}, we finally reach the desired uniform continuity property.
\end{proof}

We now derive the \textit{dynamic programming principle} (DPP) for the previously introduced optimization problem.
This result leverages the previously established regularity of the value function and approximate the value functions using $\eps$-optimal stopping lines. This technique is largely employed in the stochastic control literature \citep[see, $e.g.$,][]{claisse18,Bouchard:sto_target:opt_switch}, when minimal regularity conditions of the value function are known. It serves as an alternative approach, circumventing the need for measurable selection results, often complex and intricate.
\begin{Theorem}
\label{thm:DPP}
  Under Assumptions \ref{Assumption_H_0}, \ref{Assumption_H_0BIS}, \ref{Assumption_H_2} and \ref{Assumption_H_2BIS}, we have
  \begin{align}
    \label{eq:DPP}
    \begin{split}
      v_i(x)
      = &
      \sup_{\tau\in\Sc\Lc^{(i,x)}}
      \E\left[
        \prod_{j\in L^{(i,x)}_\theta \setminus D^{(i,x)}_\tau
        }
        \left(\rme^{-\gamma\theta_j}
        v_{j}\left(X^{(i,x),j}_{\theta_j}\right)\right)^{\mathds{1}_{\{ \theta_j\leq \tau_j \}}}
      \right.
      \\
      &\qquad\qquad\qquad
      \left.
        \times \prod_{j\in L_\tau^{(i,x)}\setminus D^{(i,x)}_\theta}
        \left( \rme^{-\gamma\tau_j}
        g_{j}\left(X^{(i,x),j}_{\tau_j}\right)\right)^{\mathds{1}_{\{ \tau_j<\theta_j \}}}
      \right]\;,
    \end{split}
  \end{align}
  for any $(i,x)\in\Ic\times\R^d$ and $\theta\in\Sc\Lc^{(i,x)}$.
\end{Theorem}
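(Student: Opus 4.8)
The plan is to prove the two inequalities in \eqref{eq:DPP} separately, exploiting the branching/Markov structure of $Z$ and the $\eps$-optimal stopping line technique advertised before the statement. Throughout, fix $(i,x)\in\Ic\times\R^d$ and $\theta\in\Sc\Lc^{(i,x)}$; write $\mu=\delta_{(i,x)}$ to lighten notation. The key structural observation is that any stopping line $\tau\in\Sc\Lc^{(i,x)}$ can be decomposed relative to $\theta$: on the event $\{\theta_j\le\tau_j\}$ for $j\in L^{(i,x)}_\theta$, the particle $j$ is still alive at time $\theta_j$, and by the strong Markov / branching property the sub-population issued from $(j,X^{(i,x),j}_{\theta_j})$ evolves, conditionally on $\Fc^{(i,x)}_\theta$, like an independent copy of $Z^{(j,X^{(i,x),j}_{\theta_j})}$; on the complementary event $\{\tau_j<\theta_j\}$ the line $\tau$ has already stopped that lineage before $\theta$. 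This is exactly what lets one split the product $\prod_{k\in L^{(i,x)}_\tau}\rme^{-\gamma\tau_k}g_k(X^{(i,x),k}_{\tau_k})$ into a part indexed by lineages stopped before $\theta$ and a part that, after conditioning, is a product over the post-$\theta$ sub-populations.

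For the inequality $v_i(x)\le(\text{RHS})$: take an arbitrary $\tau\in\Sc\Lc^{(i,x)}$ and, using the shift operators $\oplus_t$ and the branching property (Theorem \ref{Thm-Branching}), condition on $\Fc^{(i,x)}_\theta$. The lineages in $L^{(i,x)}_\tau$ split into those with $\tau_k<\theta_{j(k)}$ (where $j(k)\in L^{(i,x)}_\theta$ is the ancestor) — these contribute the factor $\prod_{j\in L^{(i,x)}_\tau\setminus D^{(i,x)}_\theta}(\rme^{-\gamma\tau_j}g_j(X^{(i,x),j}_{\tau_j}))^{\mathds 1_{\tau_j<\theta_j}}$, which is $\Fc^{(i,x)}_\theta$-measurable — and those descending from some $j\in L^{(i,x)}_\theta$ with $\theta_j\le\tau_j$. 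For the latter, the restriction of $\tau$ to the descendants of $j$ induces, via $\oplus_{\theta_j}$, a stopping line for the initial condition $(j,X^{(i,x),j}_{\theta_j})$, whose conditional expectation of the corresponding product is therefore $\le v_j(X^{(i,x),j}_{\theta_j})$ (here one also uses that $\rme^{-\gamma\theta_j}\le1$ and positivity of $g$ from Assumption \ref{Assumption_H_2}(i) to pull out the discount factor $\rme^{-\gamma\theta_j}$). Multiplying these conditional bounds over the independent sub-populations (independence given $\Fc^{(i,x)}_\theta$, by construction of $\P$ as a product measure) and taking expectations gives $J_i(x,\tau)\le(\text{RHS})$; taking the supremum over $\tau$ yields the bound. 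One must check the integrability/interchange of $\E[\cdot]$ with the (a.s. finite) product — this is handled by Proposition \ref{propoestimN} since all factors lie in $[0,K_g\vee1]$ and the number of lineages is controlled by $\bar N^{(i,x)}$.

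For the reverse inequality $v_i(x)\ge(\text{RHS})$: fix $\tau\in\Sc\Lc^{(i,x)}$ and $\eps>0$. For each $j\in L^{(i,x)}_\theta$ with $\theta_j\le\tau_j$, Proposition \ref{Prop:value_fct_properties} gives uniform continuity and boundedness of the $v_j$, so one can build (measurably in the starting point, via a countable partition of $\R^d$ into small cells and a standard pasting argument, or by invoking an $\eps$-optimal selection as in \citet{claisse18,Bouchard:sto_target:opt_switch}) a stopping line $\sigma^{j}\in\Sc\Lc^{(j,X^{(i,x),j}_{\theta_j})}$ with $J_j(X^{(i,x),j}_{\theta_j},\sigma^j)\ge v_j(X^{(i,x),j}_{\theta_j})-\eps$. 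Concatenating: define $\hat\tau\in\Sc\Lc^{(i,x)}$ by following $\tau$ on lineages stopped before $\theta$ (i.e. on $D^{(i,x)}_\theta$-complement with $\tau_j<\theta_j$) and, for $k$ a descendant of some $j\in L^{(i,x)}_\theta$ with $\theta_j\le\tau_j$, setting $\hat\tau_k=(\sigma^j)_k\circ\oplus_{\theta_j}$ shifted appropriately; one checks $\hat\tau$ is a bona fide stopping line (the line property is inherited because $L^{(i,x)}_\theta$ is a line and each $\sigma^j$ is a line on its sub-tree). Then $v_i(x)\ge J_i(x,\hat\tau)$, and conditioning on $\Fc^{(i,x)}_\theta$ and using the branching property exactly as above, $J_i(x,\hat\tau)$ equals the expectation of the product of the pre-$\theta$ factors times $\prod_{j}\rme^{-\gamma\theta_j}J_j(X^{(i,x),j}_{\theta_j},\sigma^j)\ge\prod_j\rme^{-\gamma\theta_j}(v_j(X^{(i,x),j}_{\theta_j})-\eps)$. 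Letting $\eps\to0$ (controlling the error uniformly via the moment bound of Proposition \ref{propoestimN}, since the product has at most $\bar N^{(i,x)}_{\theta}$ factors each in $[0,K_g\vee1]$) and then taking the supremum over $\tau$ gives $v_i(x)\ge(\text{RHS})$.

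The main obstacle I expect is the rigorous construction and measurability of the concatenated stopping line $\hat\tau$ in the second part: one must glue a family of $\eps$-optimal stopping lines $(\sigma^j)_j$, indexed by the random alive particles $j\in L^{(i,x)}_\theta$ and depending measurably on the random starting positions $X^{(i,x),j}_{\theta_j}$, onto the truncated line $\tau$, and verify both the stopping-line adaptedness (each $\hat\tau_k$ must be an $(\Fc^k_t)$-stopping time, which after the shift $\oplus_{\theta_j}$ requires a careful use of the independence built into $\P=(\P^\circ\otimes\P^\i)^{\otimes\Ic}$) and the line property of $L^{(i,x)}_{\hat\tau}$. The uniform continuity of $v_j$ (Proposition \ref{Prop:value_fct_properties}(ii)) is what makes the measurable-selection-free approach work here, by allowing an explicit cell-wise construction with an error controlled uniformly in the starting point; the discounting $\rme^{-\gamma\theta_j}\le1$ and the uniform bounds from Assumption \ref{Assumption_H_2} together with Proposition \ref{propoestimN} handle all the integrability and limit-interchange issues. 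The actual details of these verifications are what Section \ref{sec:thm:DPP} will carry out.
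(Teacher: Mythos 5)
Your proposal is correct and follows essentially the same route as the paper: Step 1 conditions on $\Fc^{(i,x)}_\theta$, splits $L^{(i,x)}_\tau$ relative to $\theta$, and applies the branching property (Theorem \ref{Thm-Branching}) together with $J_j\le v_j$; Step 2 builds a concatenated stopping line by gluing cell-wise $\eps$-optimal lines onto the pre-$\theta$ part of $\tau$, exploiting the uniform continuity of $v_j$ and $J_j(\cdot,\tau^{j,\cdot})$ from Proposition \ref{Prop:value_fct_properties} to avoid measurable selection, and controlling errors via Proposition \ref{propoestimN}. The one technical device you omit is the paper's localization: it first truncates $\theta$ to a stopping line $\theta^r$ that also fires on exiting the ball $B(x,r)$, carries out the concatenation argument for the resulting $\bar v_r(x)$ over a partition of the compact $\overline{B(x,r)}$, and only then sends $\eps\to0$ and $r\to\infty$; this keeps the gluing confined to a bounded set of starting positions and makes the passage to the limit clean, but the core ideas you identify are the ones the paper uses.
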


The proof of the DPP uses a branching property result for the processes along stopping lines. This property together with the proof is presented in detail in Section \ref{sec:thm:DPP}.

\section{Dynamic programming equation}\label{Section:HJB}

The value function in optimal stopping problems is known to solve an associated obstacle problem---a variational inequality where the value function must satisfy a differential equation in a continuation region and a complementary inequality in a stopping region. We refer to \citet[Theorem 4.5,][]{Touzi:book:opt_ctrl} and \citet[Lemma 5.2.2,][]{pham2009continuous} for the classical case where the dynamics of interest is a diffusion process. This characterization provides a powerful analytical framework for solving optimal stopping problems by reducing them to partial differential or integro-differential equations.

In this section, we aim to extend this framework to our specific value function, demonstrating that it satisfies a similar obstacle problem. To this end, consider the operator $\Lc$ as follows
\begin{align*}
  \Lc: \R^d\times \R\times \R^d\times \S^d\times \R^\N &\to \R\\
  \big(x,r, q,M,(r_\ell)_{\ell\in\N}\big)&\mapsto 
  \frac{1}{2}\text{Tr}\left(\sigma\sigma^\top(x) M\right)+ b(x)^\top q + \alpha(x)\sum_{k\geq0}p_k(x) \prod_{\ell=0}^{k-1} r_{\ell} - (\alpha(x)+ \gamma) r\;,
\end{align*}
with $\S^d$ being the set of symmetric matrices of dimension $d\times d$.
We show in this section that the problem of stopping lines can be characterized by the following PDE
\begin{align}\label{eq:DPE}
\min\left\{-\Lc\left(x,v_i(x), Dv_i(x),D^2v_i(x), \big(v_{i\ell}(x)\big)_{\ell\in\N} \right)~;~
v_i(x)-g_i(x)\right\}=0\;,
\end{align}
for $i\in\Ic$ and $x\in \R^d$. To simplify the notation, we  write $\Lc (i,v)(x)$ for $\Lc\Big(x,v_i(x), Dv_i(x)$, $D^2v_i(x), \big(v_{i\ell}(x)\big)_{\ell\in\N} \Big)$.

\subsection{Verification theorem}
  We now provide a verification theorem in the case where the value functions are regular. This result is a direct consequence of the dynamic programming principle and the dynamic programming equation. It establishes that the value functions are indeed the solution to the optimal stopping problem.
  Recall that, for an initial condition $(i,x)\in\Ic\times\R^d$, and a stopping line $\nu=(\nu_j)_{i\in \Ic}$, we define the set $D^{(i,x)}_{\nu}$ by \eqref{eq:def:D_tau}, with initial measure $\delta_{{(i,x)}}$.

  The proof of this result employs standard arguments, relying on the fact that the semi-linear term diminishes to zero prior to reaching the candidate optimal stopping line.
  \begin{Theorem}
  \label{thm:verification-thm}
    Assume that the value functions $(v_j)_{j\in\Ic}$ satisfy \eqref{eq:DPE}, $i.e.$,
    \begin{enumerate}[(i)]
      \item $v_j\geq g_j$ on $\R^d$ for all $j\in\Ic$;
      \item $v_j$ is twice continuously differentiable with uniformly bounded derivatives on the set $\{v_j>g_j\}$ and
      \begin{align}
      \label{eq:verif:cond}
        \Lc\left(.,v_j(.), Dv_j(.),D^2v_j(.), \big(v_{j\ell}(.)\big)_{\ell\in\N} \right) & = 0 \quad \mbox{ on the set }~ \{ v_j>g_j\}\eqsp,
      \end{align}
      for all $j\in\Ic$.
    \end{enumerate}

    For an initial condition $i\in\Ic$, $x\in\R^n$, define the stopping line $\tau^*$ as
    \begin{align*}
      \tau_{j}^*=\nu_{j}\mathds{1}_{j\notin  D^{(i,x)}_\nu}+S_j^{(i,x)}\mathds{1}_{j\in D^{(i,x)}_\nu}
      \eqsp,
    \end{align*}
    with
    \begin{align*}
      \nu_j & := \inf\l\{ s>S^{(i,x)}_{j-}~:~v_j\l(X^{(i,x),j}_s\r)=g_j\l(X^{(i,x),j}_s\r)\r\}\wedge S^{(i,x)}_j
    \end{align*}
    for $j\in\Ic$. Then, $\tau^*$ is an optimal stopping line, $i.e.$,
    \begin{align}\label{ccl:verif}
      v_{i}(x) & = \E\left[\prod_{i\in L^{(i,x)}_{\tau^*}}e^{-\gamma\tau_j^*}g_j\big(X^{(i,x),i}_{\tau_j^*}\big)\right]\;.
    \end{align}
  \end{Theorem}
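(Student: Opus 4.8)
The strategy is the classical verification argument adapted to the branching setting, proceeding in two directions. First I would show $v_i(x)\ge \E\big[\prod_{j\in L^{(i,x)}_{\tau^*}}\rme^{-\gamma\tau^*_j}g_j(X^{(i,x),j}_{\tau^*_j})\big]$: this is immediate since $\tau^*\in\Sc\Lc^{(i,x)}$ (one must check the line property, which holds because $\tau^*_j=S^{(i,x)}_j$ on $D^{(i,x)}_\nu$, so no strict descendant of a member of $L^{(i,x)}_\nu$ can be selected), and $v_i$ is by definition the supremum over all stopping lines of the reward functional. The substance is the reverse inequality.

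For the reverse inequality, I would apply the dynamic programming principle \eqref{eq:DPP} along $\theta=\tau^*$: this yields
\begin{align*}
  v_i(x)=\sup_{\tau\in\Sc\Lc^{(i,x)}}\E\Big[
  \prod_{j\in L^{(i,x)}_{\tau^*}\setminus D^{(i,x)}_\tau}\big(\rme^{-\gamma\tau^*_j}v_j(X^{(i,x),j}_{\tau^*_j})\big)^{\mathds{1}_{\{\tau^*_j\le\tau_j\}}}
  \prod_{j\in L^{(i,x)}_\tau\setminus D^{(i,x)}_{\tau^*}}\big(\rme^{-\gamma\tau_j}g_j(X^{(i,x),j}_{\tau_j})\big)^{\mathds{1}_{\{\tau_j<\tau^*_j\}}}\Big]\,.
\end{align*}
The key observation is that by definition of $\nu_j$ (hence of $\tau^*_j$), one has $v_j(X^{(i,x),j}_{\tau^*_j})=g_j(X^{(i,x),j}_{\tau^*_j})$ on the event $\{\tau^*_j<S^{(i,x)}_j\}$, i.e.\ on $L^{(i,x)}_{\tau^*}$; on $\{\tau^*_j=S^{(i,x)}_j\}$ the particle branches and $j\notin L^{(i,x)}_{\tau^*}$. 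Choosing $\tau=\tau^*$ in the supremum, the two products collapse onto $L^{(i,x)}_{\tau^*}$ (the descendant/never-born indices drop out of $L^{(i,x)}_{\tau^*}$, and for $j\in L^{(i,x)}_{\tau^*}$ one has $\tau^*_j\le\tau^*_j$ so the first product contributes $\rme^{-\gamma\tau^*_j}v_j=\rme^{-\gamma\tau^*_j}g_j$), giving exactly $v_i(x)\le \E\big[\prod_{j\in L^{(i,x)}_{\tau^*}}\rme^{-\gamma\tau^*_j}g_j(X^{(i,x),j}_{\tau^*_j})\big]$, which is \eqref{ccl:verif}.

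The point that requires care — and which I expect to be the main obstacle — is justifying that \eqref{eq:DPP} applies with $\theta=\tau^*$ and that the candidate $\nu_j$ (hence $\tau^*$) is a genuine stopping line, in particular that each $\nu_j$ is an $(\Fc^j_t)_{t\ge0}$-stopping time with $\nu_j\ge S^{(i,x)}_{j-}$ and that the process stopped at $\tau^*$ is well-behaved on the continuation region. Hitting times of closed sets by continuous adapted processes are stopping times under the usual conditions, and $\{v_j=g_j\}$ is closed by continuity of $v_j$ (Proposition \ref{Prop:value_fct_properties}(ii)) and of $g_j$, so this is standard but must be stated; one also uses that on $\{v_j>g_j\}$ the regularity hypothesis (ii) and the PDE \eqref{eq:verif:cond} guarantee, via the branching property and Itô's formula underlying \eqref{eq:DPP}, that the DPP identity is not merely an inequality there. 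An alternative, more self-contained route avoids the DPP entirely: apply Itô's formula to $\prod_j \rme^{-\gamma(t\wedge\tau^*_j)}v_j(X^{(i,x),j}_{t\wedge\tau^*_j})$ along the genealogy up to $\tau^*$, use \eqref{eq:verif:cond} to kill the drift on the continuation region before $\tau^*$ and the stopping condition $v_j=g_j$ at $\tau^*_j$, take expectations (the local-martingale terms are true martingales by the bounded-derivative assumption in (ii) together with the exponential-moment bound of Proposition \ref{propoestimN}), and read off \eqref{ccl:verif}; I would mention this as the cleaner presentation.
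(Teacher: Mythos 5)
Your main route through the DPP contains a direction error that cannot be repaired as stated. Fixing $\theta=\tau^*$ in \eqref{eq:DPP} gives $v_i(x)=\sup_{\tau\in\Sc\Lc^{(i,x)}}\E[\cdots]$; plugging the specific choice $\tau=\tau^*$ into that supremum produces a \emph{lower} bound for the right-hand side, hence $v_i(x)\ge \E\big[\prod_{j\in L^{(i,x)}_{\tau^*}}\rme^{-\gamma\tau^*_j}g_j(X^{(i,x),j}_{\tau^*_j})\big]$, which is precisely the easy direction you had already obtained from feasibility of $\tau^*$. Your calculation of the collapsing products (indicators becoming identically $1$ and $0$, $L^{(i,x)}_{\tau^*}\setminus D^{(i,x)}_{\tau^*}=L^{(i,x)}_{\tau^*}$, and $v_j=g_j$ at $\tau^*_j$ on $L^{(i,x)}_{\tau^*}$) is correct, but it yields an inequality pointing the wrong way. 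To get the reverse inequality from \eqref{eq:DPP} you would have to show that the expectation is $\le\E\big[\prod g_j\big]$ for \emph{every} admissible $\tau$, which is not at all immediate: for generic $\tau$ the second product runs over particles stopped strictly before $\tau^*_j$, i.e.\ on the continuation region where $g_j<v_j$, and there is no direct comparison. The paper does not invoke \eqref{eq:DPP} in the proof of this theorem.

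Your ``alternative, more self-contained route'' is in fact the paper's argument and should be the main line, not an aside: apply It\^o to the stopped product $\prod_{j}\rme^{-\gamma(s\wedge\tau^*_j)}v_j\big(X^{(i,x),j}_{s\wedge\tau^*_j}\big)$ via the semimartingale decomposition \eqref{eq:semimartingale-decomposition} (the paper introduces the killed process ${}^{\tau^*}Z$ for exactly this purpose), use \eqref{eq:verif:cond} to annihilate the drift on the continuation region, invoke Proposition \ref{propoestimN} for a dominating integrable bound uniform in $s$, send $s\to\infty$, and finally replace $v_j$ by $g_j$ at $\tau^*_j$. What your sketch misses is the paper's two-step structure. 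The It\^o computation is carried out first under the additional standing hypothesis $g_j(x)\in[\eps,K_g]$ for some $\eps>0$; the general case $g_j\in[0,K_g]$ is then recovered by approximating $g_j$ with $g_j+\eps$, showing that the perturbed value functions $v^\eps_j$ converge uniformly to $v_j$ and that the perturbed optimal lines $\tau^{\eps,*}$ converge a.s.\ to $\tau^*$, and passing to the limit by dominated convergence. Without this regularisation the verification does not literally go through when $g_j$ may vanish, so this step is not optional.
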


\begin{proof}

  To simplify the presentation of the proof, we give the result for an initial label $\varnothing$. The extension to any initial label $i\in\Ic$ is straightforward.
  
  \textbf{Step 1.} We first assume that there exists some $\eps>0$ such that 
  \begin{align}\label{lowBdg}
    g_i(x) & \in [\eps,K_g]\;,\qquad\text{ for } x\in\R^d\eqsp,
  \end{align}
  for any $i\in\Ic$.
  Define the process ${}^\tau Z^{(\varnothing,x)}$ by
  \begin{align*}
    {}^\tau Z^{(\varnothing,x)}_s & =  \sum_{i\in {}^\tau\Vc^{(\varnothing,x)}_s}\delta_{\l(i,{}^{\tau_i}X_{s}^{(\varnothing,x),i}\r)}
    \eqsp,
  \end{align*}
  with
  \begin{align*}
    {}^\tau\Vc^{(\varnothing,x)}_s & = \Vc^{(\varnothing,x)}_s\setminus D^{(\varnothing,x)}_\tau
  \end{align*}
  and
  \begin{align*}
    {}^{\tau_i}X_{s}^{(\varnothing,x),i} & = X_{s\wedge \tau_i}^{(\varnothing,x),i}\;,\qquad\text{ for } i\in {}^\tau\Vc^{(\varnothing,x)}_s\;,s\geq 0\eqsp.
  \end{align*}
  We notice that the process ${}^\tau Z^{(\varnothing,x)}$ satisfies \eqref{eq:semimartingale-decomposition}, with $b^i_s(x)=\mathds{1}_{s\leq\tau_i}b(x)$ (resp. $\sigma^i_s(x)=\mathds{1}_{s\leq\tau_i}b(x)$), for $(s,x)\in\R_+\times\R^d$ in place of $b$ (resp. $\sigma$), and $\tilde Q^i(ds,dz)=\mathds{1}_{i\notin D^{(\varnothing,x)}_{\tau^*}} Q^i(ds,dz)$ in place of $Q^i$, for $i\in\Ic$. Using \eqref{lowBdg}, we apply apply Itô's formula and \eqref{eq:semimartingale-decomposition} to the process $\prod_{i\in {}^\tau\Vc^{(\varnothing,x)}_s}e^{-\gamma(s\wedge \tau_i^*)}v_{i}({}^{\tau_i}X_{s}^{(\varnothing,x),i})$ to get, from 
  \eqref{eq:verif:cond} and the definition of the stopping line $\tau$,
  \begin{align*}
    v_{\varnothing}(x) & = \E\left[\prod_{i\in {}^{\tau^*}\Vc^{(\varnothing,x)}_s}e^{-\gamma(s\wedge \tau_i^*)}v_i\big(X^{(\varnothing,x),i}_{s\wedge\tau_i^*}\big)\right]\;.
  \end{align*}
  
  Proceeding then as in the proof of Proposition \ref{Prop:value_fct_properties} (i), we are able to bound the process $\prod_{i\in {}^{\tau^*}\Vc^{(\varnothing,x)}_s}e^{-\gamma(s\wedge \tau_i^*)}v_i\l(X^{(\varnothing,x),i}_{s\wedge\tau_i^*}\r)$ by an integrable random variable that does not depend on the time $s$. Therefore, sending $s$ to $+\infty$ and applying the dominated convergence theorem, we obtain
  \begin{align*}
    v_{\varnothing}(x) & =
    \E\left[
      \prod_{i\in L^{(\varnothing,x)}_{\tau^*}}e^{-\gamma \tau_i^*}v_i\l(X^{(\varnothing,x),i}_{\tau_i^*}\r)
    \right]\;.
  \end{align*}
  Moreover, since $v_i$ meets $g_i$ at $\tau_i$, we get \eqref{ccl:verif}.
  
  \textbf{Step 2.} We now weaken \eqref{lowBdg} assumming that $g_i\in[0,K_g]$ for all $i\in\Ic$. We next define for $\eps>0$ the value functions $\l(v_i^\eps\r)_{i\in\Ic}$ by \eqref{eq:value_fct} with $g_i+\eps$ in place of $g_i$.
  From Step 1, we get that the stopping line $\tau^{\eps,*}$
  \begin{align*}
    \tau_i^{\eps,*}=\nu_i\mathds{1}_{i\notin  D^{(\varnothing,x)}_{\nu^\eps}}+S_i^{(\varnothing,x)}\mathds{1}_{i\in D^{(\varnothing,x)}_{\nu^\eps}}
    \eqsp,
  \end{align*}
  with
  \begin{align*}
    \nu^{\eps}_i & := \inf\l\{
      s>S^{(\varnothing,x)}_{i-}~:~v^\eps_i(X^{(\varnothing,x),i}_s)=g_i(X^{(\varnothing,x),i}_s)+\eps
    \r\}\wedge S^{(\varnothing,x)}_i
    \eqsp,\qquad\text{ for }i\in\Ic\eqsp,
  \end{align*}
  is optimal for $v^\eps_\varnothing(x)$.
  Using the same arguments as in the proof of Proposition \ref{Prop:value_fct_properties}, we get that $v_i^\eps(x)$ converges uniformly for $(i,x)\in\Ic\times\R^d$ to the function $v_i(x)$. This gives that $\tau_i^{\eps,*}$ converges a.s. to $\tau_i^{*}$ for all $i\in\Ic$. Using again the dominated convergence theorem, we get that $\tau^*$ is optimal for $v_\varnothing(x)$.
\end{proof}
\subsection{Viscosity solutions}

The previously derived PDE reveals a deep interplay with the underlying tree structure, as it couples the value function at node $i$ with those of its direct offspring $i\ell$ for $\ell\geq0$. This coupling is a fundamental aspect of the problem, as it captures how the hierarchical dependencies within the tree influence the evolution of the value function.

We will prove that the value function \eqref{eq:value_fct} is a viscosity solution of \eqref{eq:DPE}. This approach allows us to characterize the value function through a PDE framework, which is particularly suitable given the nature of the problem. It is well-known that solutions to optimal stopping problems often lack strong differentiability properties, making classical solution methods inapplicable. To address this, we adopt the notion of viscosity solutions specifically adapted to our setting, following the framework outlined in \citet[Definition 4.2]{kharroubi2024stochastic}.

\begin{Definition}
Let $u=(u_i)_{i\in\Ic}$ such that $u_i:\R^d\rightarrow\R_+$ is a continuous function for $i\in\Ic$.

  \noindent (i)  $u$ is a \emph{viscosity supersolution} to \eqref{eq:DPE} if, for $(i_0,x_0)\in\Ic\times\R^d$, $\varphi_i\in C^2(\mathbb{R}^{d})$  for $i\in\Ic$,  a constant $C>0$,
   and a function
   $\bar\varphi\in C^0(\mathbb{R}^{d})$
   such that $\varphi_i$ is nonnegative for $i\in\Ic$,
\begin{align}\label{eq:def:visc_sol:sup_phi_bar_phi}
     \varphi_i(x) \leq& C^{|i|}\bar{\varphi}(x),\quad  \text{ for } x\in \mathbb{R}^{d}\;,~i\in \Ic\;,
   \end{align}
  and
  \begin{align*}
  0~=~\left(u_{i_0}- \varphi_{i_0}\right)(x_{0}) = & \min_{\Ic\times \mathbb{R}^{d}}\left(u_{\cdot}-\varphi_\cdot\right)\;,
  \end{align*}
  we have
  \begin{align*}
  \min\Bigg\{-\Lc(i_0,\varphi_\cdot)(x_0)~;~\varphi_{i_0}(x_0)-g_{i_0}(x_0)\Bigg\} \geq  0\;.
  \end{align*}

  \noindent (ii)  $u$ is a \emph{viscosity subsolution} to \eqref{eq:DPE} if, for $(i_0,x_0)\in\Ic\times\R^d$, $\varphi_i\in C^2(\mathbb{R}^{d})$, for $i\in\Ic$,
    a constant $C>0$, and a function
   $\bar\varphi\in C^0(\mathbb{R}^{d})$
  such that $\varphi_i$ is nonnegative, for $i\in\Ic$,
  \eqref{eq:def:visc_sol:sup_phi_bar_phi} is satisfied,
  and 
  \begin{align*}
  0~=~\left(u_{i_0}- \varphi_{i_0}\right)(x_{0}) = & \max_{\Ic\times \mathbb{R}^{d}}\left(u_{\cdot}-\varphi_\cdot\right)\;,
  \end{align*}
  we have
  \begin{align*}
  \min\Bigg\{-\Lc(i_0,\varphi_\cdot)(x_0)~;~\varphi_{i_0}(x_0)-g_{i_0}(x_0)\Bigg\} \leq  0\;.
  \end{align*}
  \noindent (iii)  $u$ is a \emph{viscosity solution} to \eqref{eq:DPE} if it is both a viscosity sub and supersolution to \eqref{eq:DPE}.
\end{Definition}

\begin{Theorem}\label{theorem:result_PDE}
  Under Assumptions \ref{Assumption_H_0}, \ref{Assumption_H_0BIS}, \ref{Assumption_H_2}, and \ref{Assumption_H_2BIS}, the value function $v$ is a viscosity solution to \eqref{eq:DPE}.
\end{Theorem}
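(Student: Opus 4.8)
\textbf{Proof strategy for Theorem \ref{theorem:result_PDE}.}

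The plan is to derive the viscosity sub- and supersolution properties directly from the dynamic programming principle (Theorem \ref{thm:DPP}), exploiting the regularity of $v$ from Proposition \ref{Prop:value_fct_properties} and the branching property used in its proof. First I would fix $(i_0,x_0)\in\Ic\times\R^d$ and a test family $(\varphi_i)_{i\in\Ic}$ with $\varphi_{i_0}(x_0)=v_{i_0}(x_0)$ and the polynomial-growth control $\varphi_i\le C^{|i|}\bar\varphi$; the latter bound is exactly what guarantees that the semilinear series $\sum_k p_k(x)\prod_{\ell<k}r_\ell$ evaluated at the test functions converges (using Assumption \ref{Assumption_H_0BIS} and $2^\ell M_\ell\le\bar M$), so that $\Lc(i_0,\varphi_\cdot)(x_0)$ is well defined. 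The obstacle inequality $v_{i_0}\ge g_{i_0}$ is immediate by choosing the stopping line that halts $i_0$ at time $0$, so both halves of the proof reduce to establishing the correct sign for $-\Lc(i_0,\varphi_\cdot)(x_0)$.

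For the \emph{supersolution} property, starting from the minimum contact point I would use the DPP \eqref{eq:DPP} with a short ``first-branching'' stopping line: let $\theta$ stop particle $i_0$ at $h\wedge S^{(i_0,x_0)}_{i_0}$ (and all others at their death times), while $\tau$ is taken arbitrary but I will ultimately let it also be small, so that on the event where no branching has occurred before $h$ the product in \eqref{eq:DPP} reduces to $\rme^{-\gamma(h\wedge\cdot)}v_{i_0}(X^{(i_0,x_0),i_0}_{h\wedge\cdot})$, while on the complementary event (one branching at some $\rho<h$) it produces $\rme^{-\gamma\rho}\prod_{\ell=0}^{k-1}v_{i_0\ell}(X^{(i_0,x_0),i_0}_\rho)$. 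Replacing each $v$ by the dominating $\varphi$ (legitimate at the contact point and by the global minimality $u_\cdot-\varphi_\cdot\ge 0$), applying Itô's formula to $\rme^{-\gamma s}\varphi_{i_0}(X^{(i_0,x_0),i_0}_s)$ between $0$ and $h$, taking expectations, using the compensator of the Poisson measure $Q^{i_0}$ to turn the branching term into $\alpha(x_0)\sum_k p_k(x_0)\prod_{\ell<k}\varphi_{i_0\ell}(x_0)$ in the limit, dividing by $h$ and letting $h\downarrow 0$ yields $-\Lc(i_0,\varphi_\cdot)(x_0)\ge 0$. The \emph{subsolution} property is the mirror argument at the maximum contact point: I would argue by contradiction, assuming $-\Lc(i_0,\varphi_\cdot)(x_0)<0$ and $\varphi_{i_0}(x_0)-g_{i_0}(x_0)>0$; by continuity these strict inequalities persist on a small ball, so one can build an admissible ``waiting'' stopping line that never stops $i_0$ before exiting this ball, and the same Itô/compensator computation — now with the inequalities reversed and a strictly negative drift — produces a value strictly exceeding $v_{i_0}(x_0)$ via \eqref{eq:DPP}, the required contradiction.

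The main obstacle I anticipate is the bookkeeping around the sets $D^{(i,x)}_\tau$ and $L^{(i,x)}_\theta$ in \eqref{eq:DPP}: one must verify carefully that for the short stopping lines chosen, the indicator exponents $\mathds 1_{\{\theta_j\le\tau_j\}}$ and $\mathds 1_{\{\tau_j<\theta_j\}}$ collapse to the clean ``no branching vs.\ one branching before $h$'' dichotomy, and that the contributions of paths with two or more branchings before $h$ are $o(h)$ — this is where Proposition \ref{propoestimN} and the exponential-moment bound on $\bar N$ are needed to dominate $\prod \varphi_{i_0\ell\cdots}$ uniformly and justify passing to the limit. A secondary technical point is the exchange of the (infinite) sum over $k$ with expectation and the $h\downarrow 0$ limit in the branching term; here the growth bound \eqref{eq:def:visc_sol:sup_phi_bar_phi} together with $\bar M<\infty$ gives a convergent dominating series, so dominated convergence applies. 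Once these estimates are in place, the Itô expansion and the passage to the limit are routine, and combining the two halves gives that $v$ is a viscosity solution to \eqref{eq:DPE}.
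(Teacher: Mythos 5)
Your proposal takes essentially the same route as the paper: the obstacle inequality from the trivial stopping line, the supersolution from the DPP with a short exit-time/first-branching stopping line combined with Itô's formula after dividing by $h$, and the subsolution by contradiction on a small ball using the analogous line and the same estimates. The one technical detail you omit in the subsolution half is the WLOG normalization $\max_{(\ell,x)\in\N\times\R^d}(v_\ell-\varphi_\ell)=-\delta<0$ on the direct offspring labels, which the paper uses together with the strict maximum in $x$ to obtain the quantitative contradiction $\zeta\wedge\eta\wedge\delta\wedge\eps>0$.
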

The proof of this result is deferred to Section \ref{sec:theorem:result_PDE}.

\subsection{Comparison principle}
We now provide a strong comparison principle for the obstacle problem \eqref{eq:DPE}. To prove this results we extend ideas of the usual comparison principle \citep[see, $e.g.$,][]{pham2018bellman,Touzi:book:opt_ctrl} with the use of \citet{claisse18}. 
\begin{Theorem}\label{Thm:comparison}
  Let $\{u_i\}_{i\in\Ic}$ (resp. $\{v_i\}_{i\in\Ic}$) be a bounded nonnegative continuous viscosity supersolution (resp. subsolution) to \eqref{eq:DPE}. 
  Suppose that there exists a constant $C\geq1$ such that 
  \begin{align}\label{condC1}
    u_i(x)~\leq ~C ~~ \mbox{ and }~~ v_i(x)~\leq~ C\eqsp,
  \end{align}
  for all $(i,x)\in\Ic\times\R^d$ and 
  \begin{align}\label{condC2}
    \gamma & > \bar \alpha\l(
        \bar{M}\frac{C}{C-1}-1
    \r)
    \eqsp.
  \end{align}
  Then, under Assumptions \ref{Assumption_H_0}, \ref{Assumption_H_0BIS}, \ref{Assumption_H_2}, and  \ref{Assumption_H_2BIS}, 
  we have $u_i \leq v_i$ for any $i\in\Ic$ on $\R^d$.
\end{Theorem}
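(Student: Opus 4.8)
The plan is to adapt the classical doubling-of-variables argument for viscosity comparison to the branching setting, with two essential new ingredients: a multiplicative penalization that forces the difference $u_i-v_i$ to vanish at spatial infinity (compensating the fact that the a priori bound on $u_i,v_i$ is only $C$, not decay), and a backward induction on the label size $|i|$ to handle the semilinear coupling term $\alpha(x)\sum_k p_k(x)\prod_{\ell=0}^{k-1} r_\ell$ which ties node $i$ to its offspring $i\ell$. First I would fix a smooth positive function $\chi$ on $\R^d$ with $\chi(x)\to+\infty$ as $|x|\to\infty$ and with controlled gradient and Hessian (e.g. $\chi(x)=\sqrt{1+|x|^2}$ or $\log(1+|x|^2)$), choose a parameter $\lambda>0$, and study $\tilde u_i:=u_i e^{-\lambda\chi}$ and $\tilde v_i:=v_i e^{-\lambda\chi}$; the key point is that condition \eqref{condC2} is precisely what is needed so that, after the exponential change, the zeroth-order terms have the right sign. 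Because $u_i,v_i\le C$, the rescaled functions tend to $0$ at infinity, so $\sup_{\Ic\times\R^d}(\tilde u_i-\tilde v_i)$, if positive, is attained (or nearly attained along a bounded sequence).

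\textbf{Key steps in order.} (1) Reduce to showing $\sup_{i,x}(\tilde u_i-\tilde v_i)\le 0$. Suppose for contradiction this sup equals some $m>0$. (2) \emph{Induction on $|i|$.} Using the uniform bound $v_i\le C$ and Assumption~\ref{Assumption_H_2} together with the polynomial/exponential growth of the number of particles (Propositions~\ref{propoestimN}, and the structure of $\Lc$), show that if $i$ is ``large enough'' — more precisely, truncating: for the induction, assume $u_j\le v_j$ for all $j$ with $|j|>n$, or handle the top of the tree by a limiting argument exploiting $g_i\to 0$ as $|i|\to\infty$ — then the semilinear term $\alpha\sum_k p_k\prod_{\ell<k}(\cdot)$ evaluated at the test functions can be bounded using the inductive hypothesis so that it does not obstruct the comparison. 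Concretely, at a maximum point the contribution of the offspring terms $\varphi_{i\ell}$ is controlled by $C^{|i|+1}\bar\varphi$-type bounds from \eqref{eq:def:visc_sol:sup_phi_bar_phi}, and one arranges the penalization $\chi$ and $\lambda$ so that $\gamma r$ dominates $\alpha\bar M \cdot(\text{offspring contribution})$ — this is exactly where $\gamma>\bar\alpha(\bar M\frac{C}{C-1}-1)$ enters, since the worst case ratio of the coupled product to the single value is governed by $C/(C-1)$ after factoring the penalization. (3) \emph{Doubling of variables.} For fixed $i_0$ (the label achieving, up to $\eps$, the supremum) and parameter $\beta>0$, consider
\[
\Phi_\beta(x,y)=\tilde u_{i_0}(x)-\tilde v_{i_0}(y)-\tfrac{\beta}{2}|x-y|^2,
\]
let $(x_\beta,y_\beta)$ be a maximizer (existence from the decay at infinity), use the standard lemma that $\beta|x_\beta-y_\beta|^2\to0$ and $(x_\beta,y_\beta)\to(\hat x,\hat x)$ with $\Phi_\beta\to m$, then invoke the Crandall–Ishii lemma to produce matrices $X\le Y$ and apply the sub/supersolution inequalities for $u$ at $x_\beta$ and $v$ at $y_\beta$ with the appropriate test functions (incorporating $e^{\lambda\chi}$). (4) Subtract the two viscosity inequalities; the second-order terms cancel/are controlled by $X\le Y$ and the Lipschitz continuity of $\sigma$ (Assumption~\ref{Assumption_H_0}(i)); the first-order terms are controlled by $b$ Lipschitz and $\beta|x_\beta-y_\beta|^2\to0$; the obstacle parts give $\varphi_{i_0}-g_{i_0}$ terms that are handled by the usual min-structure (if the obstacle is active at the max point, $u_{i_0}\le g_{i_0}\le v_{i_0}$ directly by (i) of the supersolution def applied to... wait — use that $v$ is a subsolution so $v_{i_0}\le g_{i_0}$ fails to help; rather, at an active-obstacle point for the supersolution one gets $u_{i_0}(x_\beta)\ge g_{i_0}(x_\beta)$ which combined with $v_{i_0}(y_\beta)\ge g_{i_0}(y_\beta)$... ) — so instead one splits on which term of the $\min$ is $\le0$ for the subsolution and uses $u_i\ge g_i$ pointwise from the supersolution property to derive a contradiction in the obstacle case; in the PDE case one uses steps (3)–(4). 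Sending $\beta\to\infty$ then $\eps\to0$ yields $m\le0$, a contradiction. (5) Unwind the penalization: since this holds for all admissible $\lambda$ (or a fixed one allowed by \eqref{condC2}), conclude $\tilde u_i\le\tilde v_i$, hence $u_i\le v_i$ everywhere.

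\textbf{Main obstacle.} The genuinely new difficulty, beyond bookkeeping, is the semilinear coupling term together with the infinite label set: one cannot do a naive doubling of variables simultaneously over all $i\in\Ic$, and the term $\alpha(x)\sum_{k}p_k(x)\prod_{\ell=0}^{k-1}\varphi_{i\ell}(x)$ brings in the offspring test functions, which are only controlled through \eqref{eq:def:visc_sol:sup_phi_bar_phi}. Making the backward induction on $|i|$ rigorous — in particular initializing it, which forces the use of the vanishing-reward-at-infinity hypothesis on $g_i$ and the exponential-moment estimates to show that for large $|i|$ both $u_i$ and $v_i$ are forced small enough that \eqref{condC2} closes the argument — is where the real work lies, and getting the constant $\bar M\frac{C}{C-1}-1$ to come out exactly right requires carefully tracking how the multiplicative penalization interacts with the product over offspring (each factor $\le C$, and $e^{-\lambda\chi}$ appears once per factor but the comparison is at a single node, so a factor $C/(C-1)$ geometric-series bound on $\sum_k p_k k C^{k-1}$-type quantities emerges). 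I would expect this coupling-plus-induction step to consume the bulk of the proof.
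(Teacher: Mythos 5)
Your overall architecture (multiplicative penalization to force decay, doubling of variables in $x$, Crandall--Ishii lemma, careful control of the semilinear coupling via a telescoping/geometric bound tied to $C/(C-1)$) matches the paper, but there is a genuine structural gap in how you handle the infinite label set.

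You propose a purely \emph{spatial} penalization $e^{-\lambda\chi(x)}$ and then a backward induction on $|i|$, initialized by ``a limiting argument exploiting $g_i\to 0$ as $|i|\to\infty$.'' No such vanishing-reward hypothesis appears in Assumption~\ref{Assumption_H_2} or in the statement of Theorem~\ref{Thm:comparison}; the $g_i$ are merely uniformly bounded and Lipschitz, so your induction has no base case and the argument does not close. The paper avoids this entirely by building the label into the penalization: it sets $\phi_i(x)=C^{|i|}(|x|^2+1)^\kappa$ with $C>1$ from \eqref{condC1}, so that $\tilde u_i=u_i/\phi_i\le C^{1-|i|}$ and hence $\tilde u_i+\tilde v_i\to 0$ as $|i|\to\infty$ as well as $|x|\to\infty$ (this is \eqref{croissance:v-tilde}). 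Consequently $\sup_{(i,x)}(\tilde u_i-\tilde v_i)$ is attained at some finite $(i_0,x_0)$, and — this is the step you are missing — maximality over \emph{labels} immediately gives $\tilde u_{i_0\ell}(x_n)-\tilde v_{i_0\ell}(y_n)\le \tilde u_{i_0}(x_n)-\tilde v_{i_0}(y_n)$ for every offspring index $\ell$. Feeding this into the telescopic decomposition of $\prod_\ell\tilde u_{i_0\ell}-\prod_\ell\tilde v_{i_0\ell}$ is what lets the coupled term be absorbed into the single difference at $i_0$; no induction is needed. (The precise bound is $\sum_k p_k(x)\,C^k\frac{C^k-1}{C-1}\le\bar M\frac{C}{C-1}$ via Assumption~\ref{Assumption_H_0BIS}, slightly different from the $\sum_k p_k\,k\,C^{k-1}$-type bound you sketch, though the qualitative $C/(C-1)$ factor is the same.) Everything else in your plan — the Ishii step, the treatment of the obstacle branch of the $\min$, using $\tilde\gamma-\gamma=O(\kappa)$ so that \eqref{condC2} keeps the zeroth-order coefficient strictly positive in the limit — is in line with the paper. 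The fix you need is to replace the backward induction by the $C^{|i|}$ weight in the penalization, so the supremum over $\Ic\times\R^d$ is genuinely achieved and the offspring terms are controlled by maximality rather than by an unavailable induction hypothesis.
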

The proof of this result is deferred to Section \ref{sec:theorem:result_PDE}.
As an immediate consequence of Theorems \ref{theorem:result_PDE} and \ref{Thm:comparison}, and Proposition \ref{Prop:value_fct_properties} (i) we have the following characterization of the value function $v$.
 \begin{Corollary}\label{cor:Uniq}
Suppose that 
\begin{align*}
\gamma & > \bar\alpha\l(\bar M\frac{
  \exp\l(\log(K_g)\eqsp K_g^{\frac{\bar \alpha \bar M}{\gamma}}\r)
}{\exp\l(\log(K_g)\eqsp K_g^{\frac{\bar \alpha \bar M}{\gamma}}\r)-1}-1\r).
\end{align*}
Under Assumptions \ref{Assumption_H_0}, \ref{Assumption_H_0BIS}, \ref{Assumption_H_2}, and  \ref{Assumption_H_2BIS},  $v$ is the unique nonnegative  viscosity solution to \eqref{eq:DPE} bounded by $\exp\l(\log(K_g)\eqsp K_g^{\frac{\bar \alpha \bar M}{\gamma}}\r)$. 
\end{Corollary}
We notice that the condition on $\gamma$ in Corollary \ref{cor:Uniq} is satisfied for $\gamma$ large enough. 
\begin{Remark}
\label{rmk:g_i_equal}
  In the case where the functions $g_i$ are the same, that is, $g_i=g$, for $i\in\Ic$, the same property is inherited by the functions $v_i$, $i.e.$, $v_i=v$, for $i\in\Ic$. We indeed have that $(v_{ij})_{j\in\Ic}$ is a viscosity solution to \eqref{eq:DPE}. Therefore, applying Thereom \ref{thm:verification-thm}, in the regular case, or Corollary \ref{cor:Uniq}, we have $(v_{ij})_{j\in\Ic}=(v_{j})_{j\in\Ic}$, entailing $v_i=v_\varnothing=v$, for $i\in\Ic$. In particular, we have that $v$ is the unique viscosity solution to 
  \begin{align*}
    \min\Big\{-\check\Lc\left(x,v(x), Dv(x),D^2v(x) \right)~;~
    v(x)-g(x)\Big\} &= 0\;,
  \end{align*}
  with
  \begin{align*}
    \check\Lc 
    \big(x,r, q,M\big)&:= 
    \frac{1}{2}\text{Tr}\left(\sigma\sigma^\top(x) M\right)+ b(x)^\top q + \alpha(x)\sum_{k\geq0}p_k(x) r^{k} - (\alpha(x)+ \gamma) r\;,
  \end{align*}
  for $(x,r,p,M)\in\R^d\times \R\times \R^d\times \S^d$.
\end{Remark}
\section{Proof of DPP}
\label{sec:thm:DPP}
\subsection{Branching property}
 A key aspect for developing the optimal stopping problem we intend to study is understanding how to condition with respect to the filtration generated by a stopping line. This what we present in the following theorem, generalizing results such as those presented in \citet[Lemma 3.7,][]{claisse18-v1}.
\begin{Theorem}[Branching property]
\label{Thm-Branching}
    Fix an initial condition $\mu \in E$ and two stopping lines $\tau=(\tau_i)_{i\in\Ic}$, $\nu=(\nu_i)_{i\in\Ic}\in \Sc\Lc^\mu$ such that $\nu_i\geq \tau_i$ for all $i\in\Ic$. 
    Then, conditionally to the filtration $\Fc^\mu_\tau$,
    we have that 
    the populations $\sum_{j\in\Vc^\mu_{\nu_j},i\preceq j}\delta_{(j,X^{\mu}_{\nu_j})}$, for $i\in L^\mu_\tau$, 
    satisfy
    \begin{align}
    \label{eq:branching_propST}
    \begin{split}
      &\E\left[
        \prod_{i\in L^\mu_\tau}
        \prod_{k\in L^\mu_\nu,\;k\succeq i}
        f_k\l(
         X^{\mu,k}_{\nu_k}
        \r)
        \middle | \Fc^\mu_{\tau}
      \right](\bar \omega)\\
    &\qquad=
    \prod_{i\in L^\mu_\tau} \E\left[
     \prod_{k\in L^{\bar\mu}_{\nu^{\tau_i(\bar \omega),\bar \omega}-\tau_i(\bar \omega)},\;k\succeq i}
        f_k\l(
         X^{\bar\mu,k }_{\nu_k^{\tau_i(\bar \omega),\bar \omega}-\tau_i(\bar \omega)}
        \r)
      \middle]
      \right|_{
        \bar\mu = \delta_{\l(i,X_{\tau_i}^\mu(\bar \omega)\r)}
      }
    \;,
    \end{split}
    \end{align}
    for $\P$-a.a. $\bar \omega\in\Omega$ and 
    any family $(f_i)_{i\in\Ic}$ of measurable functions from $\R^d$ to $\R_+$.
  \end{Theorem}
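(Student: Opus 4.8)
\textbf{Proof strategy for the Branching property (Theorem \ref{Thm-Branching}).}

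The plan is to reduce the statement over general stopping lines to a countable decomposition indexed by the possible values of $L^\mu_\tau$, and on each piece to invoke the strong Markov / branching property of the process at a single stopping time, then reassemble. First, I would observe that $L^\mu_\tau$ is a $\Fc^\mu_\tau$-measurable random subset of $\Ic$ taking at most countably many values: by the line property, each realization of $L^\mu_\tau$ is an antichain in $(\Ic,\prec)$, and since the population is a.s.\ finite-valued on any time interval (Proposition \ref{propoestimN}) and $\tau_i \geq S^\mu_{i-}$, only countably many antichains arise with positive probability. So it suffices to prove the identity on each event $\{L^\mu_\tau = \ell\}$ for a fixed admissible antichain $\ell = \{i^{(1)}, i^{(2)}, \ldots\} \subseteq \Ic$; on such an event the left-hand product over $i \in L^\mu_\tau$ becomes a fixed (countable) product over $i \in \ell$.

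Second, fix such an $\ell$. For each $i \in \ell$, the random variable $\tau_i$ restricted to $\{i \in L^\mu_\tau\}$ is an $(\Fc^i_t)$-stopping time, and on this event $i \in \Vc^\mu_{\tau_i}$, so $X^{\mu,i}_{\tau_i}$ is well defined. The key structural fact I would use is that, conditionally on $\Fc^\mu_\tau$, the subpopulations $\{(j,X^\mu_{\cdot}) : i \preceq j\}$ descending from the distinct particles $i \in \ell$ evolve \emph{independently}, each as a fresh branching diffusion started from $\delta_{(i, X^{\mu,i}_{\tau_i})}$ and run for the shifted stopping line $\nu^{\tau_i,\bar\omega} - \tau_i$. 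This independence is exactly what decouples the expectation into the product $\prod_{i \in \ell} \E[\cdots]$. To establish it rigorously I would lean on the canonical-space construction: the driving noises $(B^j, Q^j)_{j \succeq i}$ for different $i \in \ell$ live on disjoint coordinate blocks of $\Omega = (C^d \times M)^\Ic$ (since the $i$'s are incomparable, the label-cones $\{j : j \succeq i\}$ are pairwise disjoint), they are $\P$-independent by the product structure $\P = (\P^\circ \otimes \P^\i)^{\otimes \Ic}$, and the shift operator $\oplus_t$ acts blockwise; combined with the strong Markov property at each $\tau_i$ (which holds because, restricted to $\{i \in L^\mu_\tau\}$, $\tau_i$ is a stopping time for the filtration generated by the $i$-cone of noises) and path-wise uniqueness from Proposition \ref{Prop:existence-uniqueness}, one gets that the descendant population from $i$ after time $\tau_i$ is a measurable functional of the shifted $i$-cone noise, hence conditionally independent across $i \in \ell$ and conditionally distributed as a branching diffusion from $\delta_{(i,X^{\mu,i}_{\tau_i})}$. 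This is the generalization of \citet[Lemma 3.7]{claisse18-v1} from a single stopping time to a stopping line, and this step is the main obstacle: one must carefully verify measurability of the shifted quantities $\nu_k^{\tau_i(\bar\omega),\bar\omega}$, check that $\nu \geq \tau$ guarantees $\nu^{\tau_i,\bar\omega} - \tau_i$ is again a (nonnegative) stopping line for the shifted initial condition, and handle the conditioning on $\{L^\mu_\tau = \ell\}$ without circularity (the event is $\Fc^\mu_\tau$-measurable by definition of $\Fc^\mu_\tau$ and of $D^\mu_\tau$).

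Finally, having the conditional independence and conditional law on each $\{L^\mu_\tau = \ell\}$, I would compute: on this event,
\begin{align*}
  \E\left[ \prod_{i \in \ell} \prod_{k \in L^\mu_\nu,\, k \succeq i} f_k(X^{\mu,k}_{\nu_k}) \,\middle|\, \Fc^\mu_\tau \right](\bar\omega)
  = \prod_{i \in \ell} \E\left[ \prod_{k \in L^\mu_\nu,\, k \succeq i} f_k(X^{\mu,k}_{\nu_k}) \,\middle|\, \Fc^\mu_\tau \right](\bar\omega),
\end{align*}
where each factor, by the strong Markov / branching property at $\tau_i$ and the fact that the descendants of $i$ in $L^\mu_\nu$ correspond precisely to the stopping line $\nu^{\tau_i,\bar\omega} - \tau_i$ applied to the fresh population from $\delta_{(i,X^{\mu,i}_{\tau_i}(\bar\omega))}$, equals $\E[ \prod_{k \in L^{\bar\mu}_{\nu^{\tau_i(\bar\omega),\bar\omega} - \tau_i(\bar\omega)},\, k \succeq i} f_k(X^{\bar\mu,k}_{\nu_k^{\tau_i(\bar\omega),\bar\omega} - \tau_i(\bar\omega)}) ]|_{\bar\mu = \delta_{(i, X^\mu_{\tau_i}(\bar\omega))}}$. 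Summing over the countably many admissible $\ell$ (i.e.\ noting the identity holds $\P$-a.s.\ on each piece of the partition $\Omega = \bigsqcup_\ell \{L^\mu_\tau = \ell\}$ up to a null set) yields \eqref{eq:branching_propST} for $\P$-a.a.\ $\bar\omega$. A density/monotone-class argument in the $f_k$'s (first indicators of a $\pi$-system generating $\Bc(\R^d)$, then nonnegative measurable by monotone convergence) lets us assume without loss the $f_k$ are bounded continuous during the core argument if convenient, though since everything is nonnegative this is optional. \ep
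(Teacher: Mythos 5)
Your proposal is correct and follows essentially the same route as the paper's proof: both reduce the statement to the single-stopping-time pseudo-Markov/branching property of \citet[Lemma 3.7, Corollary 3.6]{claisse18-v1}, both exploit the product structure $\P = (\P^\circ\otimes\P^\i)^{\otimes\Ic}$ of the canonical space together with the disjointness of descendant cones for incomparable labels $i\in L^\mu_\tau$, and both obtain the conditional independence and conditional law in this way. The one organizational difference is your preliminary countable decomposition on the events $\{L^\mu_\tau = \ell\}$: the paper sidesteps this by working directly with a family of regular conditional probabilities $(\P_{\bar\omega})_{\bar\omega}$ given $\Hc^\mu_{\rho+}$ and invoking \citet[Lemma 3.2]{CTT13} to fix $D^\mu_\tau$, $\tau_i$, $X^{\mu,i}_{\tau_i}$ and $\nu_j$ $\P_{\bar\omega}$-a.s.\ at their $\bar\omega$-values, which handles the ``freeze the line'' step uniformly over $\bar\omega$ without the decomposition. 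Your decomposition is valid (since $\{L^\mu_\tau=\ell\}\in\Fc^\mu_\tau$) but redundant once you have the r.c.p.\ machinery, and it does not by itself settle the part you rightly identify as the main obstacle — namely the simultaneous application of the strong Markov property at the \emph{different} random times $\tau_i$, $i\in\ell$, which still requires exactly the product/disjoint-cone argument and the identification of the shifted process law via pathwise uniqueness that the paper carries out (for the process $\hat Z^i$ stopped at $\nu_j-\tau_i$ on each branch $j\succeq i$). So the content is the same; the paper's route is a bit more economical.
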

\begin{proof}
    This proof is based on \citet[Lemma 3.7,][]{claisse18-v1} that are generalizations to $E$-valued processes of the pseudo-Markov property one provided in \cite{CTT13}.

    Since $\tau=\l(\tau_i\r)_{i\in\Ic}$ (resp. $\nu=\l(\nu_i\r)_{i\in\Ic}$) is a  stopping line, $\tau_i$ (resp. $\nu_i$) is a $(\Fc_t^i)_{t\geq0}$-stopping time for $i\in\Ic$. This means that, for each $i\in\Ic$, there exists an $\l(\Hc^i_t\r)_{t\geq0}$-optional time $\rho_i$ (resp. $\eta_i$) such that
    \begin{align}
    \label{eq:proof:thm-branching:tau_equal_rho}
      \P\l(\tau_i= \rho_i\r)=1\quad\text{ and }\Fc^i_{\tau_i}= \Hc^i_{\rho_i+}\vee \Nc^\P\;,
    \end{align}
    \citep[see, $e.g.$, Chapter II, Theorem 75.3,][]{Rogers-Williams-Vol1}.

    From \eqref{eq:proof:thm-branching:tau_equal_rho}, it is clear that $\rho:= \l(\rho_i\r)_{i\in\Ic}$ is a stopping line. Moreover, since $\Ic$ is a countable set, we also have
    \begin{align*}
       L^\mu_\tau = 
       L^\mu_\rho\;,\quad\P\text{-a.s.}\;,
      \qquad\text{ and }\qquad
      \Fc^\mu_{\tau} = \Hc^\mu_{\rho+}\vee\Nc^\P\;,
    \end{align*}
    with
    \begin{align*}
      \Hc^\mu_{\rho+} := &
    \sigma  \Big(
        \l\{i\notin D^\mu_{\rho}\r\} \cap \Hc^i_{\rho_i+} ~:~i\in\Ic
      \Big)
      \;.
    \end{align*}
    Since $\P$ is a probability measure on the Borel $\sigma$-algebra of a Polish space, there exists $\l(\P_{\bar{\omega}}\r)_{\bar{\omega}\in\Omega}$ a family of regular conditional probabilties (r.c.p.) of $\P$ given $\Hc^\mu_{\rho+}$ \citep[see, $e.g.$, Chapter 5 Section 3.C,][]{Karatzas:Shreve-BMandStochastichCalculus}. It follows from \citet[Lemma 3.2,][]{CTT13} that, for $\P$-a.a. $\bar{\omega}\in\Omega$,
    \begin{align}\label{eq:proof:thm-branching:ICPOmega}
      \begin{split}
        &\P_{\bar{\omega}}\Big(
          D^\mu_\tau= D^\mu_{\tau\l(\bar{\omega}\r)}\l(\bar{\omega}\r)~\text{ and }\\
          &\qquad\qquad
          \tau_i= \tau_i \l(\bar{\omega}\r),~
          X^{\mu,i}_{\tau_i} = X^{\mu,i}_{\tau_i\l(\bar{\omega}\r)}
          \l(\bar{\omega}\r),~
          \nu_j=\nu_j(\bar\omega),~ \text{ for } i \notin D^\mu_\tau,~ j\succeq i
        \Big) = 1\;,
      \end{split}
    \end{align}
    and 
    \begin{align}
    \label{eq:proof:thm-branching:Phi}
      \E\left[
        \prod_{j\in L^\mu_\nu
        }
        f_j\l(
         X^{\mu,j}_{\nu_j}
        \r)
        \middle| \Fc^\mu_\tau
      \right]
        (\bar \omega)
      = &
      \E^{\P_{\bar{\omega}}}\left[
        \prod_{j\in L^\mu_\nu
        }
        f_j\l(
         X^{\mu,j}_{\nu_j}
        \r)
      \right]
      \eqsp,
    \end{align}
    for any Borel functions $f_i:\R\to\R_+$, $i\in\Ic$.
    From \eqref{eq:proof:thm-branching:ICPOmega}, $\nu^{i}(\bar\omega)$ is a stopping line associated to the initial condition $\delta_{\l(i,X^{\mu,i}_{\tau_i(\bar \omega)}(\bar\omega)\r)}$. 

    Define $\hat{Z}^i$ the sub-population generated by the branch $i\in L^\mu_\tau$, selected by the stopping line $\tau$, $i.e.$,
    \begin{align*}
        \hat{Z}^i_{s} := \sum_{j\in\Vc^\mu_{\tau_i+s},i\preceq j}\delta_{\l(j,X^{\mu,j}_{\tau_i+s \wedge (\nu_j-\tau_i)}\r)}
        \eqsp.
    \end{align*}

    Since $Z^\mu$ satisfies the semi-martingale decomposition \eqref{eq:semimartingale-decomposition}, one get by following the same arguments as in the proof of \citet[Lemma 3.7,][]{claisse18-v1} that under $\P_{\bar \omega}$ the process $\hat{Z}^i$ satisfies the semi-martingale decomposition \eqref{eq:semimartingale-decomposition} with the initial condition $\delta_{\l(i,X^{\mu,i}_{\tau_i(\bar \omega)}(\bar\omega)\r)}$ and drift and diffusion coefficients of the branch $j\succeq i$ vanishing after $\nu_j(\bar \omega)-\tau_i(\bar \omega)$, for any $i\in L^\mu_{\tau(\bar \omega)}$.

Using \citet[Corollary 3.6,][]{claisse18-v1}, we get that the law under $\P_{\bar \omega}$ of $\hat{Z}^i_{\cdot}$  coincides with that of the process solution to \eqref{eq:semimartingale-decomposition} with initial condition $\delta_{\l(i,X^{\mu,i}_{\tau_i(\bar \omega)}(\bar\omega)\r)}$
with drift and diffusion coefficients of the branch $j\succeq i$ vanishing after $\nu_j(\bar \omega)-\tau_i(\bar \omega)$ for $\P$-a.a. $\bar \omega\in\Omega$. We deduce that the processes $\hat{Z}^i$, $i\in L^\mu_{\tau}$, are independent under $\P_{\bar \omega}$ for $\P$-a.a. $\bar\omega\in\Omega$ and, from \eqref{eq:proof:thm-branching:Phi}, we get \eqref{eq:branching_propST}.

\end{proof}

This result resembles a branching property similar to the one studied in \citet{kharroubi2024stochastic}. Such properties play a key role in understanding the probabilistic behavior of systems with branching or splitting dynamics, as they allow the translation of the overall population behavior from a macroscopic perspective to an individual-based, microscopic analysis.

\subsection{Proof of Theorem \ref{thm:DPP}}

\begin{proof}[\nopunct]
    Without loss of generality, we can suppose $i=\varnothing$.
    Fix a stopping line $\theta\in\Sc\Lc^{(\varnothing,x)}$ and denote $\bar v(x)$ on the r.h.s. of \eqref{eq:DPP}.
    
    \textbf{Step 1.}
    We first show that $v_\varnothing(x)\leq \bar v(x)$. Fix $\tau\in\Sc\Lc^{(\varnothing,x)}$.
    The idea to follow is to divide the set $L^{(\varnothing,x)}_\tau$ between the particles that have already been stopped when looking at $L^{(\varnothing,x)}_\theta$ and the ones that have not yet been stopped. It is clear that 
    \begin{align*}
    L^{(\varnothing,x)}_\tau=\left(L^{(\varnothing,x)}_\tau\setminus \left( D^{(\varnothing,x)}_\theta\cup L^{(\varnothing,x)}_\theta\right)\right)
    \cup
    \left(L^{(\varnothing,x)}_\tau\cap\left(L^\mu_\theta\cup D^{(\varnothing,x)}_\theta \right) \right)\;.
    \end{align*}
    We divide the branches of $L^{(\varnothing,x)}_\tau$ between the ones belonging to $ D^{(\varnothing,x)}_\theta\cup L^{(\varnothing,x)}_\theta$ and the one in its complementary to get
    \begin{align}
    \label{eq:DPP:1st_splitting}
      \begin{split}
      &J_\varnothing(x,\tau)=\E \left[\prod_{j\in L^{(\varnothing,x)}_\tau}
      \rme^{-\gamma\tau_j}
      g_j\left(X^{(\varnothing,x),j}_{\tau_j} \right)\right] 
      \\
      &=\E \left[
        \prod_{j\in L^{(\varnothing,x)}_\tau\setminus \left( D^{(\varnothing,x)}_\theta\cup L^{(\varnothing,x)}_\theta\right)}
        \rme^{-\gamma\tau_j}g_{j}\left(X^{(\varnothing,x),j}_{\tau_j}\right)
        \prod_{j\in L^{(\varnothing,x)}_\tau\cap\left(L^{(\varnothing,x)}_\theta\cup D^{(\varnothing,x)}_\theta\right)}
        \rme^{-\gamma\tau_j}g_{j}\left(X^{(\varnothing,x),j}_{\tau_j}\right)
      \right]
      \eqsp.
      \end{split}
    \end{align}
    By definition, we have $\tau_j < \theta_j$, for $ j\in L^{(\varnothing,x)}_\tau\setminus \big( D^{(\varnothing,x)}_\theta\cup L^{(\varnothing,x)}_\theta\big)$.
    Therefore, taking the conditional expectation given $\Fc^{(\varnothing,x)}_\theta$, we get
    \begin{align*}
      &J_\varnothing(x,\tau)=
      \E \left[
        \prod_{j\in L^{(\varnothing,x)}_\tau\setminus \left( D^{(\varnothing,x)}_\theta\cup L^{(\varnothing,x)}_\theta\right)}
        \l(\rme^{-\gamma\tau_j}g_{j}\left(X^{(\varnothing,x),j}_{\tau_j}\right)\r)^{\mathds{1}_{\{ \theta_j > \tau_j \}}}
      \right.
      \\
      &\qquad\qquad\qquad\qquad\qquad\qquad\qquad\qquad
      \left.
        \E \left[
            \prod_{j\in L^{(\varnothing,x)}_\tau\cap\left(L^{(\varnothing,x)}_\theta\cup D^{(\varnothing,x)}_\theta\right)}
            \rme^{-\gamma\tau_j}g_{j}\left(X^{(\varnothing,x),j}_{\tau_j}\right)\middle|\Fc_\theta
          \right]
      \right]
      \eqsp.
     \end{align*}
     Splitting the product on $L_\tau^{(\varnothing,x)}\cap L_\theta^{(\varnothing,x)}$ as
      \begin{align*}
        &\prod_{j\in L^{(\varnothing,x)}_\tau\cap L^{(\varnothing,x)}_\theta}
        \rme^{-\gamma\tau_j}
        g_i\left(X^{(\varnothing,x),j}_{\tau_j} \right)
        \\
        &=
        \prod_{j\in L_\tau^{(\varnothing,x)}\cap L_\theta^{(\varnothing,x)}}
        \left(\rme^{-\gamma\tau_j}
        g_{j}\left(X^{(\varnothing,x),j}_{\tau_j}\right)\right)^{\mathds{1}_{\{ \theta_j > \tau_j \}}}
        \prod_{j\in L_\tau^{(\varnothing,x)}\cap L_\theta^{(\varnothing,x)}}
        \left( \rme^{-\gamma\tau_j}
        g_{j}\left(X^{(\varnothing,x),j}_{\tau_j}\right)\right)^{\mathds{1}_{\{ \theta_j\leq \tau_j \}}}
      \end{align*}
      yields
      \begin{align*}
        J_\varnothing(x,\tau)&= 
        \E \left[
          \prod_{j\in L_\tau^{(\varnothing,x)}\setminus D_\theta^{(\varnothing,x)}}
          \left(\rme^{-\gamma\tau_j}g_{j}\left(X^{(\varnothing,x),j}_{\tau_j}\right)\right)^{\mathds{1}_{\{ \theta_j > \tau_j \}}}\right.
          \\
          &\qquad\qquad\left.
            \E\left[
              \prod_{j\in L_\tau^{(\varnothing,x)}\cap L_\theta^{(\varnothing,x)}}\left(\rme^{-\gamma\tau_j}g_{j}\left(X^{(\varnothing,x),j}_{\tau_j}\right)\right)^{\mathds{1}_{\{ \theta_j\leq \tau_j \}}}
            \right.\right.
            \\
            &\qquad\qquad\qquad\qquad\qquad\qquad\qquad\qquad
            \left.\left.\prod_{j\in L_\tau^{(\varnothing,x)}\cap D_\theta^{(\varnothing,x)}}
            \rme^{-\gamma\tau_j}g_{j}\left(X^{(\varnothing,x),j}_{\tau_j}\right)\middle|\Fc_\theta
          \right]
        \right] \;. 
      \end{align*}
      As $D_\theta^{(\varnothing,x)}$ is the set of all particles that belong to the direct descendants of particles in $L_\theta^{(\varnothing,x)}$, we can decompose $L_\tau^{(\varnothing,x)}\cap D_\theta^{(\varnothing,x)}$ as 
      \begin{align*}
        L_\tau^{(\varnothing,x)}\cap D_\theta^{(\varnothing,x)} = &  \bigcup_{j\in L_\theta^{(\varnothing,x)}\setminus D_\tau^{(\varnothing,x)}}\left\{k\in L_\tau^{(\varnothing,x)} : j \prec k \right\}\;.
      \end{align*}
      Combining this with Theorem \ref{Thm-Branching}, we get that \eqref{eq:DPP:1st_splitting} can be rewritten as
      \begin{align*}
        J_\varnothing(x,\tau)&=
        \E \left[
          \prod_{j\in L_\tau^{(\varnothing,x)}\setminus D_\theta^{(\varnothing,x)}}
          \left(\rme^{-\gamma\tau_j}g_{j}\left(X^{(\varnothing,x),j}_{\tau_j}\right)\right)^{\mathds{1}_{\{  \tau_j<\theta_j  \}}}
        \right.
        \\
        &\qquad\qquad\qquad
        \left.\prod_{j\in L_\theta^{(\varnothing,x)} \setminus D_\tau^{(\varnothing,x)}}\left(\rme^{-\gamma\theta_j}J_j\left(X^{(\varnothing,x),j}_{\theta_j},\tau^{j,\theta_j}\right)\right)^{\mathds{1}_{\{\theta_j\leq\tau_j\}}}\right]\;,
      \end{align*}
      with $\tau^{j,\theta_j}$ the  stopping line defined as
      \begin{align*}
        \tau^{j,\theta_j}_{k}(\cdot)
        :=
        \l(
          \l(\tau_{k}-\theta_j\r)\mathds{1}_{\tau_{j}\geq \theta_j}
          +
          \l(+\infty\r)\mathds{1}_{\tau_{k}< \theta_j}\r)(\omega\oplus_{\theta_j(\omega)}\cdot)
          \eqsp,
      \end{align*}
      for a fixed $\omega\in\Omega$ and 
      for $j\in\Ic$, $s\geq0$.
      We see that $\tau^{j,\theta_j}$ corresponds to a stopping line for the initial value $\delta_{\l(j,X^{(\varnothing,x),j}_{\theta_j}\r)}$ from Theorem \ref{Thm-Branching}. Therefore, from the definition of the value function $v$, we get
      \begin{align*}
        &J_\varnothing(x,\tau)\\
        &\leq
        \E \left[\prod_{j\in L_\tau^{(\varnothing,x)}\setminus D_\theta^{(\varnothing,x)}}
        \left(\rme^{-\gamma\tau_j}
        g_{j}\left(X^{(\varnothing,x),j}_{\tau_j}\right)\right)^{\mathds{1}_{\{ \tau_i<\theta_i \}}}\prod_{j\in L_\theta^{(\varnothing,x)}\setminus D_\tau^{(\varnothing,x)} }
        \left(\rme^{-\gamma\theta_j}v_j\left(X^{(\varnothing,x),j}_{\theta_j}\right)\right)^{\mathds{1}_{\{\theta_i\leq\tau_i\}}}\right]\;,
      \end{align*}
      and 
      \begin{align*}
      v_\varnothing(x) \leq & \bar v (x)\;.
      \end{align*}

      \textbf{Step 2.}
      We now turn to the reverse inequality. Fix an open ball $B(x,r)$ centered in $x\in\R^d$ and radius $r >0$.
      Define the  stopping line $\theta^r$ as
      \begin{align*}
        \theta^r_\varnothing
        := &
        \inf\left\{s\geq 0 : X^{(\varnothing,x),\varnothing}_s\notin B(x,r)\right\}\wedge \theta_\varnothing\wedge S_\varnothing^{(\varnothing,x)}\eqsp,
        \\
        \theta^r_{i} 
        := &
        \begin{cases}
          \inf\left\{s\geq S_{i-}^{(\varnothing,x)} : X^{(\varnothing,x),i}_s\notin B(x,r)\right\}\wedge\theta_i \wedge S_i^{(\varnothing,x)}\eqsp, \qquad&\text{ if } \theta^r_{j}= S_j^{(\varnothing,x)} \text{ for any }j\prec i,\\
          S^{(\varnothing,x)}_i\eqsp,
          &\text{  else.}
        \end{cases}
      \end{align*}
      Consider now the following function associated to the stopping line $\theta^r$
      \begin{align*}
        \bar v_r(x):= & \sup_{\tau\in\Sc\Lc}
        \E\left[
          \prod_{i\in L_{\theta^r}^{(\varnothing,x)} \setminus D_\tau
          ^{(\varnothing,x)}}
          \left(\rme^{-\gamma\theta^r_i}
          v_{i}\left(X^{(\varnothing,x),i}_{\theta^r_i}\right)\right)^{\mathds{1}_{\{ \theta^r_i\leq \tau_i \}}}
        \right.
        \\
        &\qquad\qquad\qquad\qquad\left.
          \prod_{i\in L_\tau^{(\varnothing,x)}\setminus D_{\theta^r}^{(\varnothing,x)}}
          \left( \rme^{-\gamma \tau_i}
          g_{i} \left(X^{(\varnothing,x),i}_{\tau_i}\right) \right)^{\mathds{1}_{\{ \tau_i<\theta^r_i \}}}
        \right]\;.
      \end{align*}
      By definition, for a fixed $\eps\in\left(0,1/2\right)$, we can find a stopping line $\tau^\eps$ such that
      \begin{align}
      \label{eq:DPP:difficult_part:1}
        \begin{split}
          \bar v_r(x) \leq ~&
          \E \left[\prod_{i\in L_{\theta^r}^{(\varnothing,x)} \setminus D_{\tau^\eps}^{(\varnothing,x)}}
          \left(\rme^{-\gamma\theta^r_i}
          v_i\l(X^{(\varnothing,x),i}_{\theta^r_i}\r)\right)^{\mathds{1}_{\{ \theta^r_i\leq \tau_i^\eps \}}}
          \right.
        \\
        &\qquad\qquad\qquad\qquad\left.
          \prod_{i\in L_{\tau^\eps}^{(\varnothing,x)}\setminus D_{\theta^r}^{(\varnothing,x)}}\left(\rme^{-\gamma\tau^\eps_i}g_i\l(X^{(\varnothing,x),i}_{ \tau_i^\eps}\r)\right)^{\mathds{1}_{\{ \tau_i^\eps<\theta^r_i \}}}
          \right]+\eps \;.
        \end{split}
      \end{align}
      Consider now a partition $\{B_n\}_n$ of the closure of $B(x,r)$ and a sequence $\{x_n\}_n$ such that $x_n \in B_n$, for $n\geq0$. We can find $\tau^{i,x_n}\in\Sc\Lc^{(i,x_n)}$ such that
      \begin{align}\label{eq:DPP:difficult_part:2}
      v_i(x_n)\leq J_i(x_n, \tau^{i,x_n}) + \eps/3\;,
      \end{align}
      for any $i\in\Ic$.
      Moreover, the proof of in Proposition \ref{Prop:value_fct_properties} shows that we can chose $\tau^{i,x}\in \Sc\Lc^{(i, x)}$ for each $x\in\R^d$ such that $\tau^{i,x}=\tau^{i,x_n}$ for $x=x_n$ and $J_i(\cdot,\tau^{i,\cdot})$ is a uniformly continuous function, uniformly in the index $i\in\Ic$.
      Combining this with the uniform continuity of the value functions $v_i$, uniformly in $i\in\Ic$, we get that the partition $\{B_n\}_n$  and the points $x_n\in B_n$ can be chosen to satisfy 
      \begin{align}\label{eq:DPP:difficult_part:3}
        \max_{i\in\Ic}\left(|v_i(x) - v_i(x_n)| + |J_i(x, \tau^{i,x}) - J_i(x_n,\tau^{i,x_n})|\right) \leq \eps/3\eqsp, \qquad \text{ for }x\in B_n\;.
      \end{align}
      Define the following family of random variables $\bar \tau= (\bar  \tau_i)_{i\in\Ic}$ by
      \begin{align*}
        \bar \tau_{i\ell} & := \tau_{i\ell}^\eps \;,\qquad\text{ for }i\in L^\mu_{\tau^\eps}\setminus D_{\theta^r}\;,~\ell\in\Ic\;,
      \end{align*}
      such that $\tau_i^\eps< \theta^r_i$, and
      \begin{align*}
        \bar \tau_{i\ell} & := ~\theta^r_i + \sum_{n\geq 0} \tau^{i,X^{(\varnothing,x),i}_{\theta_i^r}}_{\ell}
        \1_{B_n}\l(X^{(\varnothing,x),i}_{\theta_i^r}\r) \;,\qquad\text{ for }
        \ell\in\Ic\;,
      \end{align*}
      for $i\in L^{(\varnothing,x)}_{\theta^r}\setminus D^{(\varnothing,x)}_{\tau^\eps}$, such that $\theta^r_i\leq \tau_i^\eps$. Moreover, take $\bar \tau _j=S^{(\varnothing,x)}_j$ if $j\in\Ic$ is not covered by the previous cases. 
    
      Note that $(\bar \tau_i)_{i\in\Ic}\in\Sc\Lc^{(\varnothing,x)}$
      and, from \eqref{eq:DPP:difficult_part:2} and \eqref{eq:DPP:difficult_part:3}, we get
      \begin{align*}
        &\E \left[
          \prod_{i\in L_{\theta^r}^{(\varnothing,x)}\setminus D_{\tau^\eps}^{(\varnothing,x)} }\left(\rme^{-\gamma\theta^r_i}v_i\left(X^{(\varnothing,x),i}_{\theta_i^r}\right)\right)^{\mathds{1}_{\{ \theta^r_i\leq \tau_i^\eps \}}} 
          \prod_{i\in L_{\tau^\eps}^{(\varnothing,x)}\setminus D_{\theta^r}^{(\varnothing,x)}}
          \left(\rme^{-\gamma\tau_i^\eps}g_i\left(X^{(\varnothing,x),i}_{\tau_i^\eps}\right)\right)^{\mathds{1}_{\{ \tau_i^\eps<\theta^r_i \}}}
        \right]
        \\
        &\leq 
        \E \left[
          \prod_{i\in L_{\theta^r}^{(\varnothing,x)}\setminus D_{\bar\tau}^{(\varnothing,x)} }
          \left(\rme^{-\gamma\theta^r_i}\l[
            J_i\left(X^{(\varnothing,x),i}_{\theta^r_i},(\bar \tau_{i\ell}-\theta^r_i
            )_{\ell\in\Ic} \right)+\eps
          \r]
          \right)^{\mathds{1}_{\{ \theta^r_i\leq \bar \tau_i \}}}
        \right.
        \\
        &\qquad\qquad\qquad\qquad\qquad\qquad\qquad\qquad\qquad
        \left.
          \prod_{i\in L_{\bar\tau}^{(\varnothing,x)}\setminus D_{\theta^r}^{(\varnothing,x)}}
          \left(\rme^{-\gamma\bar\tau_i}
          g_i\left(X^{(\varnothing,x),i}_{\bar \tau_i}\right)\right)^{\mathds{1}_{\{ \bar \tau_i<\theta^r_i \}}}
        \right]   \;.
      \end{align*}
      Following the same argument as in the proof of Proposition \ref{Prop:value_fct_properties} (i) and using \citet[Lemma 4.2 ,][]{claisse18-v1}, we get that there exists a constant $C>0$ such that 
      \begin{align*}
        &\prod_{i\in L_{\theta^r}^{(\varnothing,x)}\setminus D_{\bar\tau}^{(\varnothing,x)} }
        \left(
          \rme^{-\gamma\theta^r_i}\l[J_i\left(X^{(\varnothing,x),i}_{\theta_i^r},(\bar \tau_{i\ell}-\theta^r_i
          )_{\ell\in\Ic} \right)+\eps\r]
        \right)^{\mathds{1}_{\{ \theta^r_i\leq \bar \tau_i \}}}
        \\
        &\qquad-\prod_{i\in L_{\theta^r}^{(\varnothing,x)}\setminus D_{\bar\tau}^{(\varnothing,x)} }
        \left(\rme^{-\gamma\theta^r_i}J_i\left(X^{(\varnothing,x),i}_{\theta_i^r},(\bar \tau_{i\ell}-\theta^r_i
        )_{\ell\in\Ic} \right)
        \right)^{\mathds{1}_{\{ \theta^r_i\leq \bar \tau_i \}}}  \leq  
        ~\bar N_T C^{\bar N_T}L\eps
        \eqsp,
      \end{align*}
      with $T:=\log(K)/\gamma$.
      Therefore, we obtain
      \begin{align*}
        &\E \left[
          \prod_{i\in L_{\theta^r}^{(\varnothing,x)}\setminus D_{\tau^\eps}^{(\varnothing,x)} }\left(\rme^{-\gamma\theta^r_i}v_i\left(X^{(\varnothing,x),i}_{\theta_i^r}\right)\right)^{\mathds{1}_{\{ \theta^r_i\leq \tau_i^\eps \}}} 
          \prod_{i\in L_{\tau^\eps}^{(\varnothing,x)}\setminus D_{\theta^r}^{(\varnothing,x)}}
          \left(\rme^{-\gamma\tau_i^\eps}g_i\left(X^{(\varnothing,x),i}_{\tau_i^\eps}\right)\right)^{\mathds{1}_{\{ \tau_i^\eps<\theta^r_i \}}}
        \right]
        \\
        & \leq  
        \E \left[
          \prod_{i\in L_{\theta^r}^{(\varnothing,x)}\setminus D_{\bar\tau}^{(\varnothing,x)} }
          \left(\rme^{-\gamma\theta^r_i}J_i\left(X^{(\varnothing,x),i}_{\theta_i^r},(\bar \tau_{i\ell}-\theta^r_i
          )_{\ell\in\Ic} \right)
          \right)^{\mathds{1}_{\{ \theta^r_i\leq \bar \tau_i \}}}
        \right.
        \\
        &\qquad\qquad\qquad\qquad\qquad
        \left.
          \prod_{i\in L_{\bar\tau}^{(\varnothing,x)}\setminus D_{\theta^r}^{(\varnothing,x)}}
          \left(\rme^{-\gamma\bar\tau_i}
          g_i\left(X^{(\varnothing,x),i}_{\bar \tau_i}\right)\right)^{\mathds{1}_{\{ \bar \tau_i<\theta^r_i \}}}
        \right]
        +\eps L\E\l[\bar N_T C^{\bar N_T}\r]\;. 
      \end{align*}
      Using Theorem \ref{Thm-Branching}, we get
      \begin{align*}
        &\E \left[
          \prod_{i\in L_{\theta^r}^{(\varnothing,x)}\setminus D_{\bar\tau}^{(\varnothing,x)} }
          \left(\rme^{-\gamma\theta^r_i}J_i\left(X^{(\varnothing,x),i}_{\theta_i^r},(\bar \tau_{i\ell}-\theta^r_i
          )_{\ell\in\Ic} \right)
          \right)^{\mathds{1}_{\{ \theta^r_i\leq \bar \tau_i \}}}
        \right.
        \\
        &\qquad\qquad\qquad\qquad\quad
        \left.
          \prod_{i\in L_{\bar\tau}^{(\varnothing,x)}\setminus D_{\theta^r}^{(\varnothing,x)}}
          \left(\rme^{-\gamma\bar\tau_i}
          g_i\left(X^{(\varnothing,x),i}_{\bar \tau_i}\right)\right)^{\mathds{1}_{\{ \bar \tau_i<\theta^r_i \}}}
        \right]
        =
        \E \left[\prod_{i\in L_{\bar \tau}^{(\varnothing,x)}}\rme^{-\gamma\bar\tau_i}g_i\left(X^{(\varnothing,x),i}_{\bar \tau_i}\right)\right]  \;.
      \end{align*}
      This yields that conclusion applying Proposition \ref{propoestimN} and sending $\eps$ to zero and $r$ to $+\infty$.
    \end{proof}

\section{Proof of viscosity solution properties}

\label{sec:theorem:result_PDE}

To prove the viscosity properties of the problem under consideration, we first understand how the tree structure influences the evolution of the value function. This will yield to the characterization of the value function. We begin by examining the following preliminary proposition. This result shows that, under Assumption \ref{Assumption_H_0BIS}, the operator $\Lc$ is well-defined for bounded sequences of functions. Additionally, it provides further insights into the behavior of the operator and its connection to the tree structure.

\begin{Proposition}\label{prop:results:p_k}
  (i) Under Assumption \ref{Assumption_H_0BIS}, the series  $y\in\R^d\mapsto\sum_{k\geq0}p_k(x) |y|^k$ has infinite radius of convergence for any $x\in\R^d$.

  (ii) Under Assumptions \ref{Assumption_H_0}, \ref{Assumption_H_0BIS}, and \ref{Assumption_H_2BIS}, we have
  \begin{align*}
    \sum_{k\geq0} |p_k(y)-p_k(x)|R^k \xrightarrow[|x-y|\rightarrow0]{} 0\eqsp,\qquad\text{ for }R>0\eqsp.
  \end{align*}
\end{Proposition}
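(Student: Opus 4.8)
The plan is to obtain (i) directly from the moment control in Assumption~\ref{Assumption_H_0BIS}, reusing the exponential--series manipulation already employed in the proof of Proposition~\ref{propoestimN}, and then to deduce (ii) by splitting the series $\sum_{k\geq0}|p_k(y)-p_k(x)|R^k$ into a finite head and an infinite tail: the tail will be controlled uniformly in $(x,y)$ by means of (i), and the head will be handled through the continuity of each individual $p_k$.

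For (i), fix $x\in\R^d$ and $r\geq1$ and expand $r^k=\rme^{k\log r}=\sum_{m\geq0}(k\log r)^m/m!$. Since every term is nonnegative, Tonelli's theorem gives $\sum_{k\geq0}p_k(x)r^k=\sum_{m\geq0}\frac{(\log r)^m}{m!}\sum_{k\geq0}k^m p_k(x)\leq\sum_{m\geq0}\frac{(\log r)^m}{m!}M_m$. By Assumption~\ref{Assumption_H_0BIS} one has $M_m\leq\bar M\,2^{-m}$ for every $m\geq0$, so this is bounded by $\bar M\,\rme^{(\log r)/2}=\bar M\sqrt r<\infty$; for $0\leq r<1$ the series is trivially bounded by $\sum_k p_k(x)=1$. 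Hence $\sum_k p_k(x)r^k<\infty$ for every $r\geq0$, i.e. the radius of convergence is infinite, and specialising $r=|y|$ finishes the point. (Equivalently, one could argue $\limsup_k p_k(x)^{1/k}=0$ from $p_k(x)\leq M_m/k^m\leq\bar M(2k)^{-m}$ by choosing $m=k$.)

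For (ii), fix $R>0$. First I would record the uniform tail bound: for $k>N$, $R^k=(2R)^k2^{-k}\leq2^{-N}(2R)^k$, so applying the estimate of (i) with $r=2R$ gives $\sup_{z\in\R^d}\sum_{k>N}p_k(z)R^k\leq2^{-N}C_R$ with $C_R:=\bar M\sqrt{2R}\vee1$, which tends to $0$ as $N\to\infty$. Since $|p_k(y)-p_k(x)|\leq p_k(x)+p_k(y)$, the tail $\sum_{k>N}|p_k(y)-p_k(x)|R^k$ is then at most $2^{1-N}C_R$, uniformly in $x,y$. Given $\eps>0$, I pick $N$ making this at most $\eps/2$. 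For the head, $\sum_{k\leq N}|p_k(y)-p_k(x)|R^k\leq(R^N\vee1)\sum_{k=0}^N|p_k(y)-p_k(x)|$ is a finite sum of functions each continuous by Assumption~\ref{Assumption_H_2BIS}, hence it tends to $0$ as $|x-y|\to0$, and I choose $\delta>0$ so that it is at most $\eps/2$ once $|x-y|\leq\delta$. Adding the two bounds yields $\sum_{k\geq0}|p_k(y)-p_k(x)|R^k\leq\eps$ for $|x-y|\leq\delta$, which is the claim.

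The argument is essentially routine, and the only point requiring a little care is that the tail estimate be uniform in the pair $(x,y)$; this is exactly what the $x$--independent moment bounds $M_m$, and hence the bound obtained in (i), deliver. Assumption~\ref{Assumption_H_0} is not really used here beyond measurability of the $p_k$, and Assumption~\ref{Assumption_H_2BIS}(i) on $\alpha$ plays no role.
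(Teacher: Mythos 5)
Your proof of part (i) is essentially the paper's: both expand $r^k=\sum_m(k\log r)^m/m!$, interchange the two sums by Tonelli, and use $M_m\leq\bar M\,2^{-m}$ from Assumption~\ref{Assumption_H_0BIS} to obtain the $x$-uniform bound $\sum_k p_k(x)r^k\leq\bar M\sqrt r$ for $r\geq1$. For part (ii) you take a genuinely different route. You truncate at level $N$: the tail $\sum_{k>N}|p_k(y)-p_k(x)|R^k$ is bounded uniformly in $(x,y)$ by $2^{1-N}(\bar M\sqrt{2R}\vee1)$ via $R^k\leq 2^{-N}(2R)^k$ for $k>N$ together with the bound from (i) and $|p_k(y)-p_k(x)|\leq p_k(x)+p_k(y)$; the head is then a finite sum handled by the (uniform) continuity of the finitely many $p_k$, $k\leq N$. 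The paper instead rewrites $\sum_k|p_k(y)-p_k(x)|R^k=\int f_{x,y}\,d\rho$ against the geometric probability measure $\rho=\frac{R-1}{R}\sum_k R^{-k}\delta_{\{k\}}$ with $f_{x,y}(k)=\frac{R}{R-1}|p_k(y)-p_k(x)|R^{2k}$, checks an $L^2(\rho)$ bound uniform in $(x,y)$ (hence uniform integrability of the family $(f_{x,y})$), and concludes via Vitali's convergence theorem. The raw inputs are the same in both arguments---the $x$-uniform moment bounds $M_\ell$ and the uniform continuity of each $p_k$---but your truncation is more elementary and makes the uniformity of the tail estimate fully explicit, whereas the paper's version is shorter once the uniform-integrability machinery is invoked. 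One small wording point: in the head estimate you describe the $p_k$ as ``continuous,'' but since the convergence is asserted over arbitrary pairs $(x,y)$ with $|x-y|\to0$ (neither point held fixed), you need the \emph{uniform} continuity from Assumption~\ref{Assumption_H_2BIS}(ii) to choose $\delta$ independently of $x$; that is exactly the assumption you cite, so the argument is correct, only the phrasing is slightly loose.
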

\begin{proof}
  
  (i) Fix some $R>1$. We then have
  \begin{align*}
  \sum_{k\geq0}p_k(x) R^k &= \sum_{k\geq0}p_k(x) \rme^{k\log(R)}\\
  &= \sum_{k\geq0}p_k(x) \sum_{\ell\geq0}\frac{k^\ell\log(R)^\ell}{\ell !}\\
  &= \sum_{\ell\geq0}\frac{\log(R)^\ell}{\ell !}\sum_{k\geq0}p_k(x)k^\ell\\
    & \leq \sum_{\ell\geq0}\frac{\log(R)^\ell M_\ell}{\ell !}\\
      & \leq C\sqrt{R}
  \end{align*}
  where the constant $C$ is such that $C\geq \sup_{\ell}2^\ell M_\ell$.

  (ii) Note that it is sufficient to prove the result for $R>1$. Suppose then that $R>1$ and let $\rho$ be the probability measure on $\N$ defined by
  \begin{align*}
    \rho &= \frac{R-1}{R}\sum_{k\geq0}\frac{1}{R^k}\delta_{\{k\}}\;.
  \end{align*}
  Therefore,
  \begin{align*}
    \sum_{k\geq0} |p_k(y)-p_k(x)|R^k &= \int_\N f_{x,y}(k)d\rho(k)
  \end{align*}
  where $f_{x,y}(k)=\frac{R}{R-1}|p_k(y)-p_k(x)|R^{2k}$, for $k\in\N$ and $x,y\in\R^d$. Assumption \ref{Assumption_H_2BIS} yields that
  \begin{align*}
    f_{x,y}(k) & \xrightarrow[|x-y|\rightarrow0]{} 0\;.
  \end{align*}
  Moreover,
  \begin{align*}
    \int_\N |f_{x,y}(k)|^2d\mu(k) &= \frac{R}{R-1}\sum_{k\geq0} |p_k(y)-p_k(x)|^2R^{3k}\\
    & \leq \frac{R}{R-1}\Big(\sum_{k\geq0} p_k(y)R^{3k}+\sum_{k\geq0}p_k(x)R^{3k}\Big)
    \eqsp.
  \end{align*}
  Proceeding as in the first step, we get 
  \begin{align*}
    \int_\N |f_{x,y}(k)|^2d\mu(k) &= 2C\sqrt{R}
  \end{align*}
  where the constant $C$ is such that $C\geq \sup_{\ell}2^\ell M_\ell$. Therefore, the family $(f_{x,y})_{x,y\in\R^d}$ is uniformly integrable and we get the result from the dominated convergence theorem.
\end{proof}

\subsection{Proof of Theorem \ref{theorem:result_PDE}}

\begin{proof}[\nopunct]
  \textbf{Step 1: supersolution property.}
  Fix $(i_0,x_{0})\in\Ic\times \mathbb{R}^{d}$ and let $\bar\varphi\in C^0(\R^d)$, $C>0$, and $\varphi_i\in C^{2}( \mathbb{R}^{d})$ satisfying \eqref{eq:def:visc_sol:sup_phi_bar_phi}, for $i\in \Ic$,
  and
  \begin{align}\label{condMin}
  0=\left(v_{i_0}- \varphi_{i_0}\right)(x_{0})=\min_{(i,x)\in \Ic\times \mathbb{R}^{d}}\left(v_{i}-\varphi_{i}\right)(x)\;.
  \end{align}
  Without loss of generality, we can assume $i_0=\varnothing$ and  the minimum in \eqref{condMin} to be strict in $x$. To simplify notation and avoid unnecessary complexity, we omit the superscript $\{\cdot\}^{(\varnothing,x_{0})}$.

  Consider, first, the following (trivial) stopping line $\tau^{\text{triv}}$
  \begin{align*}
    \tau^{\text{triv}}_\varnothing:= 0, ~\text{ and }~ \tau^{\text{triv}}_j:=S_j,~ \text{ for }j\in \Ic\backslash\{\varnothing\}\;.
  \end{align*}
  Combining it with \eqref{condMin}, we get the inequality $v_{\varnothing}(x_{0})=\varphi_{\varnothing}(x_{0})\geq g_{\varnothing}(x_0)$.

  Fix $h>0$. Define the stopping time $\bar\theta^h$ as follows
  \begin{align*}
    \bar\theta^h := \inf\left\{t>0: X^{i}_t\notin  B(x_0,1)\eqsp,\eqsp i\in \Vc_t\right\}\wedge h\;.
  \end{align*}
  We associated with the stopping time $\bar\theta^h$ the stopping line
  and define the following stopping line $\theta^h$ that gives either the exit time $\bar\theta^h$ or at the branching time $S_\varnothing$, in the case it arrives before this exit time, $i.e.$,
  \begin{align*}
    \theta^h_\varnothing
    := ~&
    \bar\theta^h\wedge S_\varnothing
    \eqsp,
    \\
    \theta^h_\ell
    := ~&
    \begin{cases}
      S_\ell\eqsp, \quad &\text{ if }\bar\theta^h< S_\varnothing
      \eqsp,
      \\
      S_\varnothing\eqsp,  ~~&\text{ else}\eqsp,
    \end{cases}
    \qquad \text{ for }\ell\in \N\eqsp,
    \\
    \theta^h_j:= ~&S_j\eqsp,
    \qquad\qquad\qquad\qquad\qquad~ \text{ for }j\in \Ic\backslash\left(\{\varnothing\}\cup \N\right)\;.
  \end{align*}
  Therefore, from \eqref{eq:DPP} w.r.t.\ the stopping lines $\theta=\theta^h$ and $\tau=\theta^h$,
  we have
  \begin{align*}
    v_{\varnothing}(x_0)  \geq ~& \E\left[
      \rme^{-\gamma {\bar\theta^h}}
      v_{\varnothing}\left(X^{\varnothing}_{\bar\theta^h}\right) \1_{\bar\theta^h< S_\varnothing } +
      \prod_{\ell=0}^{\l|\Vc_{S_\varnothing}\r|-1}
      \left(\rme^{-\gamma {S_\varnothing}} v_{\ell}\left(X^{\ell}_{S_\varnothing}\right)\right) \1_{\bar\theta^h\geq S_\varnothing}
    \right]\;.
  \end{align*}
  Since we are considering a local branching, $i.e.$, $X^{\ell}_{S_\varnothing}=X^{\varnothing}_{S_\varnothing-}$, we obtain
  \begin{align*}
    v_{\varnothing}(x_0) 
    \geq ~&
    \E\left[
      \rme^{-\gamma {\bar\theta^h}}
      v_{\varnothing}\left(X^{\varnothing}_{\bar\theta^h}\right) \1_{\bar\theta^h< S_\varnothing } +
      \prod_{\ell=0}^{\l|\Vc_{S_\varnothing}\r|-1}
      \left(\rme^{-\gamma {S_\varnothing}} v_{\ell}\left(X^{\varnothing}_{S_\varnothing-}\right)\right) \1_{\bar\theta^h\geq S_\varnothing}
    \right]
    \\
    \geq ~&
    \E\left[
      \rme^{-\gamma {\bar\theta^h}}
      \varphi_{\varnothing}\left(X^{\varnothing}_{\bar\theta^h}\right) \1_{\bar\theta^h< S_\varnothing } +
      \prod_{\ell=0}^{\l|\Vc_{S_\varnothing}\r|-1}
      \left(\rme^{-\gamma {S_\varnothing}} \varphi_{\ell}\left(X^{\varnothing}_{S_\varnothing-}\right)\right) \1_{\bar\theta^h\geq S_\varnothing}
    \right]
    \;.
  \end{align*}
  where in the last inequality we used \eqref{condMin}, as the functions $v_{j}$ and $\varphi_{j}$ are positive for $j\in\Ic$.
  From the dynamics \eqref{eq:semimartingale-decomposition} of the process $Z$, we get
  \begin{align*}
    &\varphi_{\varnothing}(x_0)
    \\
    &\geq~
    \E\left[
      \rme^{-\gamma {\bar\theta^h}}
      \varphi_{\varnothing}\left(X^{\varnothing}_{\bar\theta^h}\right) \1_{\bar\theta^h< S_\varnothing }
      +
      \sum_{k\geq 1}(\alpha p_k)\l(X^{\varnothing}_{S_\varnothing-}\r)\prod_{\ell=0}^{k-1}
        \left(\rme^{-\gamma {S_\varnothing}} \varphi_{\ell}\left(X^{\varnothing}_{S_\varnothing-}\right)\right) \1_{\bar\theta^h\geq S_\varnothing}
    \right]
    \\
    &=~
    \E\left[
      \rme^{-\gamma {\bar\theta^h\wedge  S_\varnothing }}
      \varphi_{\varnothing}\left(X^{\varnothing}_{\bar\theta^h \wedge  S_\varnothing-}\right)
      +
      \sum_{k\geq 1}(\alpha p_k)\l(X^{\varnothing}_{S_\varnothing-}\r)\prod_{\ell=0}^{k-1}
      \left(\rme^{-\gamma {S_\varnothing}} \big(\varphi_{\ell}-\varphi_{\varnothing}\big)\left(X^{\varnothing}_{S_\varnothing-}\right)\right) \1_{\bar\theta^h\geq S_\varnothing}
    \right]
    \\
    &=~
    \E\left[
      \rme^{-\gamma {\bar\theta^h\wedge  S_\varnothing }}
      \varphi_{\varnothing}\left(X^{\varnothing}_{\bar\theta^h \wedge  S_\varnothing-}\right)
      +
      \sum_{k\geq 1}\int_0^{\bar\theta^h}(\alpha p_k)\l(X^{\varnothing}_s\r)\prod_{\ell=0}^{k-1}
      \rme^{-\gamma s} \big(\varphi_{\ell}-\varphi_{\varnothing}\big)\left(X^{\varnothing}_{s}\right)\rmd s
    \right]\;.
\end{align*}
Applying Itô's formula, we get
\begin{align*}
  0\geq~ &
  \E\left[
    \int_0^{\bar\theta^h\wedge S_\varnothing } \rme^{-\gamma {s}}\left(
    \frac{1}{2}\text{Tr}\left(\sigma\sigma^\top D^2\varphi_{\varnothing}\right) + \left(b^\top D \varphi_{\varnothing}\right) - \gamma \varphi_{\varnothing}
    \right) \left(X^{\varnothing}_{s}\right)\rmd s
  \right.\\
  &
  \left.
    \qquad\qquad\qquad\qquad\qquad+
    \sum_{k\geq 1}\int_0^{\bar\theta^h}\alpha p_k\l(X^{\varnothing}_s\r)\prod_{\ell=0}^{k-1}
    \rme^{-\gamma {s}} \big(\varphi_{\ell}-\varphi_{\varnothing}\big)\left(X^{\varnothing}_{s}\right)\rmd s
  \right]\;.
\end{align*}
Dividing by $h>0$ both sides of previous inequality, from the mean value theorem and the dominated convergence theorem, we have that $-\Lc\left(i_0,\varphi_{\cdot}\right)(x_0)\geq 0$.

  \textbf{Step 2: subsolution property.}
  Fix $(i_0,x_{0})\in\Ic\times \mathbb{R}^{d}$ and let $\varphi\in C^0(\R^d)$, $C>0$, and $\varphi_i\in C^{2}( \mathbb{R}^{d})$ satisfying \eqref{eq:def:visc_sol:sup_phi_bar_phi}, for $i\in \Ic$, and
  \begin{align}\label{condMax}
  0=\left(v_{i_0}- \varphi_{i_0}\right)(x_{0})=\max_{(i,x)\in \Ic\times \mathbb{R}^{d}}\left(v_{i}-\varphi_{i}\right)(x)~.
  \end{align}
  Without loss of generality, we suppose that $i_0=\varnothing$ and take the maximum to be strict in $x$ and that 
  \begin{align}
  \label{condtestfunci0ell}
    \max_{(\ell,x)\in \N\times \mathbb{R}^{d}}\left(v_{\ell}-\varphi_{\ell}\right)(x)= & -\delta<0
    \;.
  \end{align}
  As for the previous step, we omit the superscript $\{\cdot\}^{(\varnothing,x_{0})}$.

  We argue by contradiction and assume that
  \begin{align*}
    2\eta
    :=
    \min\Big\{-\Lc(\varnothing,\varphi_\cdot)(x_0)~;~\varphi_{\varnothing}(x_0)-g_{\varnothing}(x_0)\Big\}>0\;.
  \end{align*}
  From the continuity of the functions involved in the previous inequality, we may find $\eps>0$ such that
  \begin{align}
  \label{subsol:inside_L_geq_eta}
    -\Lc(\varnothing,\rme^{-\gamma s}(\varphi_\cdot-y))(x) &>~\eta \eqsp,
  \\
  \label{subsol:jump_geq_eta}
    \left(\varphi_{\varnothing} - g_{\varnothing}\right)(x)&>~ \eta\;,
  \end{align}
  for all $s,y\in[0,\eps)$ and $x\in  B(x_0,\eps)$.
  Note that
  \begin{align}\label{subsol:parabolic_max_zeta}
    - \zeta = & \max_{\partial B_\eps(x_0)}(v_{\varnothing}- \varphi_{\varnothing})(x) <0\;,
  \end{align}
  as $x_0$ is a strict maximizer,
  where $\partial B_\varepsilon(x_0)$ denotes the boundary of $B_\varepsilon(x_0)$.
  
  We now show that \eqref{subsol:inside_L_geq_eta}, \eqref{subsol:jump_geq_eta}, and \eqref{subsol:parabolic_max_zeta} lead to a contradiction of \eqref{eq:DPP}.
  Define the stopping time $\bar \theta^\eps$ as
  \begin{align*}
    \bar\theta^\eps := \inf\Big\{t>0: \left(t,X^{i}_{t}\right)\notin[0,\eps)\times B_\eps(x_0)\eqsp,\eqsp i\in \Vc_t\Big\}\;.
  \end{align*}
  As for the supersolution property, we consider the stopping line $\theta^\eps$ as
  \begin{align*}
    \theta^\eps_\varnothing
    := ~&
    \bar\theta^\eps\wedge S_\varnothing
    \eqsp,
    \\
    \theta^\eps_\ell
    := ~&
    \begin{cases}
      S_\ell\eqsp, \quad &\text{ if }\bar\theta^\eps< S_\varnothing
      \eqsp,
      \\
      S_\varnothing\eqsp,  ~~&\text{ else}\eqsp,
    \end{cases}
    \qquad \text{ for }\ell\in \N\eqsp,
    \\
    \theta^\eps_j:= ~&S_j\eqsp,
    \qquad\qquad\qquad\qquad\qquad~ \text{ for }j\in \Ic\backslash\left(\{\varnothing\}\cup \N\right)\;.
  \end{align*}
  From \eqref{condtestfunci0ell} and \eqref{subsol:parabolic_max_zeta}, we get
  \begin{align*}
    &v_{\varnothing}(x_0)
    -
    \E\left[
      \prod_{j\in L_{\theta^\eps}{\setminus D_\tau}  }
      \left(\rme^{-\gamma\theta^\eps_j}
      v_{j}\left(X^{j}_{\theta^\eps_j}\right)\right)^{\mathds{1}_{\l\{ \theta^\eps_j\leq \tau_j \r\}}}
      \prod_{j\in L_{\tau}\setminus D_{\theta^\eps}}
      \left( \rme^{-\gamma\tau_j}
      g_{j}\left(X^{j}_{\tau_j}\right)\right)^{\mathds{1}_{\l\{ \tau_j<\theta^\eps_j \r\}}}
    \right]
    \\
    & =
    \varphi_{\varnothing}(x_0) - 
    \E\left[
      \prod_{j\in L_{\theta^\eps} {\setminus D_\tau} }
    \left(\rme^{-\gamma\theta^\eps_j}
    v_{j}\left(X^{j}_{\theta^\eps_j}\right)\right)^{\mathds{1}_{\l\{ \theta^\eps_j\leq \tau_j \r\}}}
    \prod_{j\in L_{\tau}\setminus D_{\theta^\eps}}
    \left( \rme^{-\gamma\tau_j}
    g_{j}\left(X^{j}_{\tau_j}\right)\right)^{\mathds{1}_{\l\{ \tau_j<\theta^\eps_j \r\}}}\right]
    \\
    &=
    \varphi_{\varnothing}(x_0) - 
    \E\left[
      \mathds{1}_{\l\{ \bar \theta^\eps< S_\varnothing\r\}}  
      \left(\rme^{-\gamma \bar\theta^\eps}
      v_{\varnothing}\left(X^{\varnothing}_{\bar\theta^\eps}\right)\right)^{\mathds{1}_{\l\{\bar \theta^\eps\leq \tau_\varnothing \r\}}}
      \left( \rme^{-\gamma \tau_\varnothing}
      g_{\varnothing}\left(X^{\varnothing}_{\tau_\varnothing}\right)\right)^{\mathds{1}_{\l\{ \tau_\varnothing<\theta^\eps_\varnothing \r\}}}
    \right]
    \\
    &\quad
    - \E\left[
    \mathds{1}_{\l\{ \bar \theta^\eps\geq S_\varnothing\r\}}
    \left(
      \mathds{1}_{\l\{ \tau_\varnothing\geq S_\varnothing\r\}}
      \left(
        \prod_{\ell=0 }^{\l|\Vc_{S_\varnothing}\r|-1}
        \rme^{-\gamma S_\varnothing }
        v_{\ell}\left(X^{\varnothing}_{S_\varnothing}\right)
      \right)
      +
      \mathds{1}_{\l\{ \tau_\varnothing< S_\varnothing\r\}}
      \rme^{-\gamma \tau_\varnothing}
      g_{\varnothing}\left(X^{\varnothing}_{\tau_\varnothing}\right)
    \right)\right]
    \\
    &\geq ~
    \varphi_{\varnothing}(x_0) - 
    \E\left[
      \mathds{1}_{\l\{ \bar \theta^\eps< S_\varnothing\r\}}  
      \left(\rme^{-\gamma \bar\theta^\eps}
      \left(\varphi_{\varnothing}\left(X^{\varnothing}_{\bar\theta^\eps}\right)-\zeta\right){\mathds{1}_{\l\{\bar \theta^\eps\leq \tau_\varnothing \r\}}}
      +
      \rme^{-\gamma \tau_\varnothing}
      g_{\varnothing}\left(X^{\varnothing}_{\tau_\varnothing}\right){\mathds{1}_{\l\{ \tau_\varnothing<\theta^\eps_\varnothing \r\}}}\right)
    \right]
    \\
    &\quad
    - \E\left[
      \mathds{1}_{\l\{ \bar \theta^\eps\geq S_\varnothing\r\}}
      \phantom{\prod_{\ell=0 }^{\l|\Vc_{S_\varnothing}\r|-1}}
      \right.
      \\
      &\qquad\qquad\left.
      \left(
        \mathds{1}_{\l\{ \tau_\varnothing\geq S_\varnothing\r\}}
        \left(\prod_{\ell=0 }^{\l|\Vc_{S_\varnothing}\r|-1}
        \rme^{-\gamma S_\varnothing}\left(
        \varphi_{\ell}\left(X^{\varnothing}_{S_\varnothing}\right)-\delta\right)\right)
        +
        \mathds{1}_{\l\{ \tau_\varnothing< S_\varnothing\r\}}
        \rme^{-\gamma \tau_\varnothing}
        g_{\varnothing}\left(X^x_\varnothing(\tau_\varnothing)\right)
      \right)
    \right]
    \\
    &\geq
    \varphi_{\varnothing}(x_0) - 
    \E\left[
    \mathds{1}_{\l\{ \bar \theta^\eps< S_\varnothing\r\}}  
    \rme^{-\gamma \bar\theta^\eps}
    \left(\varphi_{\varnothing}\left(X^{\varnothing}_{\bar\theta^\eps}\right)-\zeta\right){\mathds{1}_{\l\{\bar \theta^\eps\leq \tau_\varnothing \r\}}}
    \right] 
    \\&
    \qquad\qquad\qquad
    - \E\left[
    \mathds{1}_{\l\{ \bar \theta^\eps\geq S_\varnothing\r\}}
    \mathds{1}_{\l\{ \tau_\varnothing\geq S_\varnothing\r\}}
    \prod_{\ell=0 }^{\l|\Vc_{S_\varnothing}\r|-1}
    \rme^{-\gamma S_\varnothing}
    \left(\varphi_{\ell}\left(X^{\varnothing}_{S_\varnothing}\right)-\delta\right)
    \right]
    \\
    &\qquad\qquad\qquad
      -\E\left[
      \mathds{1}_{\{ \tau_\varnothing< \bar \theta^\eps\wedge S_\varnothing\}}
      \rme^{-\gamma \tau_\varnothing}
      g_{\varnothing}\left(X^{\varnothing}_{\tau_\varnothing}\right)\right] \;,
  \end{align*}
for any $\tau\in\Sc\Lc$.
Using \eqref{subsol:jump_geq_eta}, we get
\begin{align*}
  &v_{\varnothing}(x_0)
    -
    \E\left[
      \prod_{j\in L_{\theta^\eps}{\setminus D_\tau}  }
      \left(\rme^{-\gamma\theta^\eps_j}
      v_{j}\left(X^{j}_{\theta^\eps_j}\right)\right)^{\mathds{1}_{\l\{ \theta^\eps_j\leq \tau_j \r\}}}
      \prod_{j\in L_{\tau}\setminus D_{\theta^\eps}}
      \left( \rme^{-\gamma\tau_j}
      g_{j}\left(X^{j}_{\tau_j}\right)\right)^{\mathds{1}_{\l\{ \tau_j<\theta^\eps_j \r\}}}
    \right]
    \\
    &\geq ~
    \varphi_{\varnothing}(x) - 
    \E\left[
      \mathds{1}_{\l\{ \bar \theta^\eps< S_\varnothing\r\}}  
      \rme^{-\gamma \bar\theta^\eps}
      \left(\varphi_{\varnothing}\left(X^{\varnothing}_{\bar\theta^\eps}\right)-\zeta\right){\mathds{1}_{\l\{\bar \theta^\eps\leq \tau_\varnothing \r\}}}
    \right]
    \\&
    \qquad\qquad\qquad
    - \E\left[
      \mathds{1}_{\l\{ \bar \theta^\eps\geq S_\varnothing\r\}}
      \mathds{1}_{\l\{ \tau_\varnothing\geq S_\varnothing\r\}}
      \prod_{\ell=0 }^{\l|\Vc_{S_\varnothing}\r|-1}
      \rme^{-\gamma S_\varnothing}
      \Big(\varphi_{\ell}\left(X^{\varnothing}_{S_\varnothing}\right)-\delta\Big)
    \right]
    \\&
    \qquad\qquad\qquad\qquad\qquad\qquad\qquad\qquad
    -\E\left[
    \mathds{1}_{\{ \tau_\varnothing< \bar \theta^\eps\wedge S_\varnothing\}}
    \rme^{-\gamma \tau_\varnothing}\left(
    \varphi_{\varnothing}\left(X^{\varnothing}_{\tau_\varnothing}\right)-\eta\right)\right]
    \\
    &\geq ~ 
    \varphi_{\varnothing}(x) - 
    \E\left[
      \mathds{1}_{\l\{ \bar \theta^\eps\wedge \tau_\varnothing< S_\varnothing\r\}}  
      \rme^{-\gamma \l(\bar\theta^\eps \wedge \tau_\varnothing\r)}
      \left(\varphi_{\varnothing}\left(X^{x_0}_\varnothing(\bar\theta^\eps\wedge \tau_\varnothing)\right)-\zeta\wedge \eta\wedge \delta\wedge \eps\right)
    \right] 
    \\&
    \qquad\qquad\quad
    - \E\left[
      \mathds{1}_{\l\{ \bar \theta^\eps\wedge \tau_\varnothing \geq S_\varnothing\r\}}
      \prod_{\ell=0 }^{\l|
        \Vc_{S_\varnothing}\r|-1}
      \rme^{-\gamma S_\varnothing}
      \Big(\varphi_{\ell}\left(X^{\varnothing}_{S_\varnothing}\right)-\zeta\wedge \eta\wedge \delta\wedge \eps\Big)
    \right]
    \\&=
    \varphi_{\varnothing}(x) - 
    \E\left[
      \Prod_{i\in\Vc_{\bar \theta^\eps\wedge \tau_\varnothing\wedge S_\varnothing}}
      \rme^{-\gamma \l( \bar \theta^\eps\wedge \tau_\varnothing\wedge S_\varnothing\r)}
      \Big(\varphi_{i}\left(X^{\varnothing}_{S_\varnothing}\right)-\zeta\wedge \eta\wedge \delta\wedge \eps\Big)
      \right] \;. 
\end{align*}
Applying Ito's formula to the r.h.s.\ of the previous inequality, we have
\begin{align*}
  &\varphi_{\varnothing}(x_0)-\E\left[\Prod_{i\in\Vc_{\bar \theta^\eps\wedge \tau_\varnothing\wedge S_\varnothing}}
  \rme^{-\gamma \bar \theta^\eps\wedge \tau_\varnothing\wedge S_\varnothing}
  \Big(\varphi_{i}\left(X^{\varnothing}_{S_\varnothing}\right)-\zeta\wedge \eta\wedge \delta\wedge \eps\Big)\right]
  \\
  &=  
  \zeta\wedge \eta\wedge \delta\wedge \eps
  +\E\left[\int_0^{\bar \theta^\eps\wedge \tau_\varnothing\wedge S_\varnothing}-\Lc\big(\varnothing, e^{-\gamma s}\big(\varphi_\cdot-\zeta\wedge \eta\wedge \delta\wedge \eps\big)\big)\l(X^{\varnothing}_s\r)\rmd s\right] \;. 
\end{align*}
Therefore, since
\begin{align*}
  \E\left[
    \int_0^{\bar \theta^\eps\wedge \tau_\varnothing\wedge S_\varnothing}-\Lc\l(
      \varnothing, \rme^{-\gamma s}(\varphi_\cdot-\zeta\wedge \eta\wedge \delta\wedge \eps)
    \r)
  \l(X^{\varnothing}_s\r)\rmd s\right] &\geq & 0\;,
\end{align*}
from \eqref{subsol:inside_L_geq_eta} and the definition of $\bar\theta^\eps$, we can conclude that
\begin{align*}
  &v_{\varnothing}(x_0) - \E\left[
    \prod_{j\in L^\mu_{\theta^\eps}{\setminus D_\tau}  }
  \left(\rme^{-\gamma\theta^\eps_j}
  v_{j}\left(X^{\varnothing}_{\theta^\eps_j}\right)\right)^{\mathds{1}_{\l\{ \theta^\eps_j\leq \tau_j \r\}}}
  \prod_{j\in L^\mu_{\tau}\setminus D_{\theta^\eps}}
  \left( \rme^{-\gamma\tau_j}
  g_{j}\left(X^{\varnothing}_{\tau_j}\right)\right)^{\mathds{1}_{\l\{ \tau_j<\theta^\eps_j \r\}}}\right]  \\ &\qquad\qquad\qquad\qquad\qquad\qquad\qquad\qquad\qquad\qquad\qquad\qquad\qquad\qquad\qquad\geq ~\zeta\wedge \eta\wedge \delta\wedge \eps  \;,
\end{align*}
for any $\tau\in\Sc\Lc$. However, since $\zeta\wedge \eta\wedge \delta\wedge \eps  >  0$, we get a contradiction of \eqref{eq:DPP}.
\end{proof}

\subsection{Proof of Theorem \ref{Thm:comparison}}

Before establishing the comparison principle, we present the following preliminary lemma. In this result, we examine the alterations in the PDE \eqref{eq:DPE} when a multiplicative penalization is applied to the viscosity solutions. Take $C>1$ and $\kappa>0$, to be fixed later, and define $\phi:\R^d\times\Ic\to\R$ as 
\begin{align}
\label{eq:def:phi}
  \phi_i(x):=C^{|i|}(|x|^2 +1)^{\kappa}\eqsp,
\end{align}
together with the following operator
\begin{align*}
  \tilde \Lc: \Ic\times \R^d\times \R\times \R^d\times \S^d\times \R^\N &\to \R\\
  \big(i,x,r, q,M,(r_\ell)_{\ell\in\N}\big)&\mapsto 
  \frac{1}{2}\text{Tr}\left(\sigma(x)\sigma^\top(x) M\right)+ \tilde b(x)^\top q\\
  &\qquad\quad + \alpha(x)\sum_{k\geq0}p_k(x) \frac{\prod_{\ell=0}^{k-1}\phi_{i\ell}(x)}{\phi_{i}(x)}\prod_{\ell=0}^{k-1} r_{\ell} - (\alpha(x)+ \tilde\gamma(x)) r\;,
\end{align*}
with
\begin{align*}
  \tilde b(x) = b(x) + \left( \frac{\sigma \sigma^\top D\phi}{\phi}\right)(x)\qquad
  \tilde \gamma(x) = \gamma - \left(\frac{b^\top D\phi}{\phi}\right)(x)-\frac{1}{2\phi(x)}\text{Tr} 
  \left(\sigma\sigma^\top D^2\phi \right)(x)\;.
\end{align*}
We observe that the function $\tilde b$ and $\tilde \gamma$  do not depend on the variable $i\in\Ic$, although the function $\phi$ does. 
\begin{Lemma}
\label{Lemma:pre-comparison}
  Let $\{u_i\}_{i\in\Ic}$ (resp. $\{v_i\}_{i\in\Ic}$) be a bounded nonnegative continuous viscosity supersolution (resp. subsolution) to \eqref{eq:DPE}. 
  Then, the functions $\{\tilde u_i\}_{i\in\Ic}$ (resp. $\{\tilde v_i\}_{i\in\Ic}$) defined by
  \begin{align*}
    \tilde u_i(x) = \frac{u_i(x)}{\phi_i(x)} \quad\left(\text{resp. } \tilde v_i(x) = \frac{v_i(x)}{\phi_i(x)}\right)\;,\quad x\in\R^d\;,
  \end{align*}
  are bounded nonnegative
  viscosity supersolution (resp. subsolution) to 
  \begin{align}\label{eq:DPE-tilde}
    \min\left\{-\tilde\Lc_i\left(x,\tilde v_i(x), D\tilde v_i(x),D^2\tilde v_i(x), \big(\tilde v_{i\ell}(x)\big)_{\ell\in\N} \right)~;~
    \tilde v_i(x)-\tilde g_i(x)\right\}=0\;,
  \end{align}
  with $\tilde g_i(x)=g_i(x)/\phi(x)$, for $(i,x)\in\Ic\times \R^d$.
\end{Lemma}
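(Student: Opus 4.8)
The plan is the standard device for transferring viscosity properties through a multiplicative change of unknown: I will turn any admissible test family for $\tilde u$ (resp. $\tilde v$) at a point $(i_0,x_0)$ into an admissible test family for $u$ (resp. $v$) by multiplying by $\phi$, invoke the viscosity property of $u$ (resp. $v$), and then divide the resulting inequality by the strictly positive weight $\phi_{i_0}(x_0)$. First I would record a few elementary facts. Since $C>1$, $\kappa>0$ and $|x|^2+1\geq1$, we have $\phi_i(x)\geq1$ for all $(i,x)\in\Ic\times\R^d$, so $0\leq\tilde u_i=u_i/\phi_i\leq u_i$ shows that $\{\tilde u_i\}_{i\in\Ic}$ is nonnegative and bounded, and it is continuous as a quotient of continuous functions with a strictly positive smooth denominator; the same holds for $\{\tilde v_i\}_{i\in\Ic}$. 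I would also note that the coefficients $\tilde b$ and $\tilde\gamma$ do not depend on the label, because the factor $C^{|i|}$ cancels in each of the ratios $\sigma\sigma^\top D\phi_i/\phi_i$, $b^\top D\phi_i/\phi_i$ and $\text{Tr}(\sigma\sigma^\top D^2\phi_i)/\phi_i$, so that $\tilde\Lc$ is well-defined on bounded sequences.

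For the supersolution claim, fix $(i_0,x_0)\in\Ic\times\R^d$ together with admissible test data for $\tilde u$: a constant $c>0$, a function $\bar\varphi\in C^0(\R^d)$ and nonnegative $\varphi_i\in C^2(\R^d)$ with $\varphi_i\leq c^{|i|}\bar\varphi$ such that $0=(\tilde u_{i_0}-\varphi_{i_0})(x_0)=\min_{\Ic\times\R^d}(\tilde u_\cdot-\varphi_\cdot)$. Set $\psi_i:=\phi_i\varphi_i$. Then $\psi_i\in C^2(\R^d)$ is nonnegative and $\psi_i\leq(cC)^{|i|}\bar\psi$ with $\bar\psi(x):=(|x|^2+1)^\kappa\bar\varphi(x)\in C^0(\R^d)$, so $(\psi_i)_{i\in\Ic}$ is an admissible test family for $u$; moreover $u_i-\psi_i=\phi_i(\tilde u_i-\varphi_i)\geq0$ on $\Ic\times\R^d$ with equality exactly at $(i_0,x_0)$, i.e. $0=(u_{i_0}-\psi_{i_0})(x_0)=\min_{\Ic\times\R^d}(u_\cdot-\psi_\cdot)$. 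Since $\psi_{i_0\ell}(x_0)\leq u_{i_0\ell}(x_0)$ and $u$ is bounded, Proposition \ref{prop:results:p_k}(i) guarantees that the branching series in $\Lc(i_0,\psi_\cdot)(x_0)$ converges absolutely, so the supersolution property of $u$ for \eqref{eq:DPE} applies and gives $\min\{-\Lc(i_0,\psi_\cdot)(x_0)\,;\,\psi_{i_0}(x_0)-g_{i_0}(x_0)\}\geq0$.

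It then remains to rewrite this inequality in terms of $\varphi$ and $\tilde\Lc$. Expanding $D\psi_{i_0}$ and $D^2\psi_{i_0}$ by the Leibniz rule and regrouping gives the pointwise identity
\begin{align*}
  \Lc(i_0,\psi_\cdot)(x_0) = \phi_{i_0}(x_0)\,\tilde\Lc(i_0,\varphi_\cdot)(x_0)\,,
\end{align*}
where: the cross second-order term $(\sigma\sigma^\top D\phi_{i_0})^\top D\varphi_{i_0}$ combines with $b^\top D\varphi_{i_0}$ into $\tilde b^\top D\varphi_{i_0}$; the surviving term $\varphi_{i_0}\big[\tfrac12\text{Tr}(\sigma\sigma^\top D^2\phi_{i_0})+b^\top D\phi_{i_0}\big]$ combines with $-\gamma\phi_{i_0}\varphi_{i_0}$ into $-\tilde\gamma\,\phi_{i_0}\varphi_{i_0}$; and the branching term factors as $\alpha\sum_k p_k\prod_{\ell=0}^{k-1}(\phi_{i_0\ell}\varphi_{i_0\ell})=\phi_{i_0}\,\alpha\sum_k p_k\,\tfrac{\prod_{\ell=0}^{k-1}\phi_{i_0\ell}}{\phi_{i_0}}\prod_{\ell=0}^{k-1}\varphi_{i_0\ell}$. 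In the same way $\psi_{i_0}(x_0)-g_{i_0}(x_0)=\phi_{i_0}(x_0)\big(\varphi_{i_0}(x_0)-\tilde g_{i_0}(x_0)\big)$. Dividing the inequality obtained from $u$ by $\phi_{i_0}(x_0)>0$ yields $\min\{-\tilde\Lc(i_0,\varphi_\cdot)(x_0)\,;\,\varphi_{i_0}(x_0)-\tilde g_{i_0}(x_0)\}\geq0$, which is exactly the supersolution inequality for $\tilde u$ at $(i_0,x_0)$ with test family $\varphi$. The subsolution statement for $\tilde v$ follows from the same chain of implications, with minima replaced by maxima and $\geq0$ replaced by $\leq0$ throughout.

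I expect the only genuinely delicate point to be the Leibniz bookkeeping in the displayed identity: checking that $\phi$ factors cleanly out of the infinite branching series (which is where the uniform bound $\psi_{i_0\ell}(x_0)\leq\sup_{j,x}u_j(x)$ together with Proposition \ref{prop:results:p_k}(i) is used to justify the rearrangement) and that the drift and discount corrections produced by the expansion coincide precisely with the defining formulas for $\tilde b$ and $\tilde\gamma$. Everything else is a routine transcription of the classical change-of-variable argument for viscosity solutions.
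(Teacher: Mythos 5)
Your proof is correct and follows essentially the same approach as the paper's: transform the test family by multiplying by $\phi$, apply the viscosity inequality for $u$ (resp. $v$), and observe that the Leibniz expansion factors out $\phi_{i_0}(x_0)$ to recover the $\tilde\Lc$-inequality. You merely work out in more detail what the paper dispatches in one line ("dividing this equation by the positive function $\phi$ and applying the product rule"), including the useful observation that the growth bound transfers with constant $cC$ and $\bar\psi=(|x|^2+1)^\kappa\bar\varphi$, and the remark that convergence of the branching series at $x_0$ is ensured by $\psi_{i_0\ell}(x_0)\leq u_{i_0\ell}(x_0)$ together with Proposition \ref{prop:results:p_k}(i).
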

\begin{proof}
  We prove the supersolution case, the subsolution case is proven with the same techniques.
  
  Fix $(i_0,x_0)\in\Ic\times\R^d$ and some test functions $\tilde\varphi_i\in C^2(\R^d)$, for $i\in\Ic$
  , and $\bar{\tilde\varphi}\in C^0(\R^d)$ satisfying \eqref{eq:def:visc_sol:sup_phi_bar_phi} 
  and
  \begin{align*}
    0=\left(\tilde u_{i_0}- \tilde\varphi_{i_0}\right)(x_{0}) =  \min_{\Ic\times \mathbb{R}^{d}}\left(\tilde u_{\cdot}-\tilde\varphi_\cdot\right)\;.
  \end{align*}
  Therefore, for $\varphi_i = \phi_i\tilde\varphi_i$, for $i\in\Ic$, with $\phi$ as in \eqref{eq:def:phi}, we have 
  \begin{align*}
    0=\left(u_{i_0}- \varphi_{i_0}\right)(x_{0}) = \min_{\Ic\times \R^d}\left( u_{\cdot}-\varphi_\cdot\right)
    \;.
  \end{align*}
  Moreover, the condition \eqref{eq:def:visc_sol:sup_phi_bar_phi} is satisfied with respect to the function $\bar\varphi=\phi\bar{\tilde\varphi}$.
  Therefore, the functions $(\varphi_i)_{i\in\Ic}$ satisfy \eqref{eq:DPE}. Dividing this equation by the positive function $\phi$ and applying the product rule, we get that the functions $(\tilde\varphi_i)_{i\in\Ic}$ satisfy \eqref{eq:DPE-tilde}.
\end{proof}

\begin{proof}[Proof of Theorem \ref{Thm:comparison}]
  We assume to the contrary that there exists $(z,j)\in \R^d\times \Ic$ such that 
  \begin{align}\label{eq:comparison:absurdHP}
  u_j(z)-v_j(z) \geq ~& \delta \;,
  \end{align} 
  for some $\delta>0$.
  Take $\tilde u_i = u_i/\phi_i$ (resp. $\tilde v_i =v_i/\phi_i$), for $i\in\Ic$, with $\phi$ as in \eqref{eq:def:phi}. We take the constant $C$ in $\phi$ satisfying $C>1$ and \eqref{condC1}-\eqref{condC2}.
  We have in particular that
  \begin{align*}
  \sup_{(i,x)\in\Ic\times\R^d}\tilde u_i(x) \leq 1\;,\qquad
  \sup_{(i,x)\in\Ic\times\R^d}\tilde v_i(x) \leq 1\;.
  \end{align*}
  Since  $u_i$ and $v_i$ are bounded and the constant $\kappa$ in the definition of $\phi$ is strictly positive, we have
  \begin{align}\label{croissance:v-tilde}
    \lim_{(|i|,x)\to\infty} (\tilde u_i + \tilde v_i)(x) =0\;.
  \end{align}
  Combining this with \eqref{eq:comparison:absurdHP}, together with the fact that $\phi>0$, there exists $(i_0,x_0)\in \Ic\times\R^d$ such that
  \begin{align}
  \label{eq:comparison:maximization_pt}
    \bar M_{0+} := \sup_{(i,x)\in \Ic\times\R^d}\tilde u_{i}(x)- \tilde v_{i}(x) = & 
    \tilde u_{i_0}(x_0)- \tilde v_{i_0}(x_0)~~\geq~~ \frac{\delta}{\phi_j(z)}~~>~~ 0\;.
  \end{align}
  For $n\geq1$, consider
  \begin{align*}
    \bar M_{n} = &  \sup_{(i,x,y)\in \Ic\times \R^d\times\R^d}\tilde u_i(x)- \tilde v_{i}(y)
    - \frac{n}{2}|x-y|^2\;.
  \end{align*}
  From \eqref{croissance:v-tilde}, there exists $(i_n,x_n,y_n)$ such that 
  \begin{align*}
    \bar M_{n} = & \tilde u_{i_n}(x_n)- \tilde v_{i_n}(y_n)
    - \frac{n}{2}|x_n-y_n|^2\;.
  \end{align*}
  From the definition of $\bar M_{n}$, taking $x=y$ in the previous supremum, we obtain 
  \begin{align} \label{eq:comparison:sup_geq_0}
    0<~\frac{\delta}{\phi_j(z)} \leq ~ \bar M_{0+}\leq~ \bar M_{n}\leq~ 2\;.
  \end{align}
  This yields 
  \begin{align}\label{eq:comparison:bound_x_y_n}
    \frac{n}{2}|x_n-y_n|^2 \leq & 2\;.
  \end{align}

  Using \eqref{eq:comparison:sup_geq_0} and \eqref{croissance:v-tilde}, up to  a sub-sequence, we can take $i_n=i^*$, for some $i^*\in\Ic$ and all $n$, and $(x_n,y_n)\to(x^*,y^*)$, as $n\rightarrow\infty$. Therefore, \eqref{eq:comparison:bound_x_y_n} yields
  \begin{align*}
    \lim_{n\to\infty}|x_n-y_n|=~0 & ~\mbox{ and }~   x^*=y^*\;.
  \end{align*}
  Moreover, from \eqref{eq:comparison:sup_geq_0}, we obtain
  \begin{align*}
  \lim_{n\to\infty}\frac{n}{2}|x_n-y_n|^2 = & 0\;.
  \end{align*}
  Without loss of generality, we can take the maximization point in \eqref{eq:comparison:maximization_pt} to be $(i^*,x^*)$, $i.e.$, $(i_0,x_0)=(i^*,x^*)$. Therefore, as $(x_n, y_n)\in \R^d\times\R^d$ is a maximizer of $\bar M_{n}$, we may apply Ishii's lemma \citep[see, $e.g.$, Theorem 8.3,][]{crandall1992users} and Lemma \ref{Lemma:pre-comparison}. Therefore, there exist $A_n, B_n\in \S^d$ such that
  \begin{align*}
    \min\left\{-\tilde \Lc_{i_0}\left(x_n,\tilde u_{i_0}(x_n), n(x_n-y_n), A_n, \big(\tilde u_{i_0\ell}(x_n)\big)_{\ell\in\N} \right)~;~
    \tilde u_{i_0}(x_n)-\tilde g_{i_0}(x_n)\right\} &\leq~0\eqsp,\\
    \min\left\{-\tilde \Lc_{i_0}\left(y_n,\tilde v_{i_0}(y_n), n(x_n-y_n), B_n, \big(\tilde v_{i_0\ell}(y_n)\big)_{\ell\in\N} \right)~;~
    \tilde v_{i_0}(y_n)-\tilde g_{i_0}(y_n)\right\}&\geq~0\;,
  \end{align*}
  and
  \begin{align*}
    -3n ~
      \I_{2d} \leq
    \begin{pmatrix}
      A_n & 0\\0 & - B_n
    \end{pmatrix} \leq ~& 3n \begin{pmatrix}
      \I_d & -\I_d\\-\I_d & \I_d
    \end{pmatrix}
    \;.
  \end{align*}

  If there exists a subsequence of $\{x_n\}_n$, still denoted $\{x_n\}_n$, such that $\tilde u_{i_0}(x_n)-\tilde g_{i_0}(x_n)\leq 0$, we would get
  \begin{align*}
    \left[\tilde u_{i_0}(x_n)-\tilde g_{i_0}(x_n)\right] - \left[\tilde v_{i_0}(y_n)-\tilde g_{i_0}(y_n)\right] &\leq ~ 0\;,
  \end{align*}
  for any $n$. This is, however, in contradiction with \eqref{eq:comparison:sup_geq_0}, the fact that $(x_n,y_n)\to (x_0,x_0)$ and the definition of $(i_0,x_0)$. Therefore, we must have
  \begin{align}
  \label{eq:comparison:eq1}
    -\tilde \Lc_{i_0}\left(x_n,\tilde u_{i_0}(x_n), n(x_n-y_n), A_n, \big(\tilde u_{i_0\ell}(x_n)\big)_{\ell\in\N} \right) & \leq ~0\;,\\
  \label{eq:comparison:eq2}
    -\tilde \Lc_{i_0}\left(y_n,\tilde v_{i_0}(y_n), n(x_n-y_n), B_n, \big(\tilde v_{i_0\ell}(y_n)\big)_{\ell\in\N} \right) &\geq~0\;,
  \end{align}
  for $n$ large enough. 
  Using a telescopic sum and the fact that the functions $u_i$ and $v_i$ are positive and bounded by $C$, we have
  \begin{align*}
    & \sum_{k\geq0}p_k (x_n)\frac{\prod_{\ell=0}^{k-1}\phi_{i\ell}(x_n)}{\phi_{i}(x)}\prod_{\ell = 0}^{k-1} \tilde  u_{i_0\ell}(x_n) -\sum_{k\geq0}p_k (y_n)\frac{\prod_{\ell=0}^{k-1}\phi_{i\ell}(y_n)}{\phi_{i}(y_n)} \prod_{\ell = 0}^{k-1} \tilde v_{i_0\ell}(y_n)
    \\
    &= 
    \Delta_n+\sum_{k\geq0}p_k(x_n)\sum_{\ell = 0}^{k-1}\frac{\phi_{i\ell}(x_n)}{\phi_{i}(x_n)}
      \left(\prod_{\bar\ell = 0}^{\ell-1}u_{i_0\bar\ell}(x_n)\right)\left( \tilde u_{i_0\ell}(x_n)-\tilde v_{i_0\ell}(y_n)\right)
      \left(\prod_{\bar\ell = \ell+1}^{k-1}v_{i_0\bar\ell}(y_n)\right)
      \eqsp,
  \end{align*}
  with
  \begin{align*}
    \Delta_n:=\sum_{k\geq0}\l(p_k (x_n)-p_k (y_n)\r)\frac{\prod_{\ell=0}^{k-1}\phi_{i\ell}(y_n)}{\phi_{i}(y_n)} \prod_{\ell = 0}^{k-1} \tilde v_{i_0\ell}(y_n)
    \eqsp.
  \end{align*}
  Using that $(i_0,x_n,y_n)$ is a maximizer of $\bar M_{n}$ and \eqref{eq:comparison:sup_geq_0}, we have
  \begin{align*}
    & \sum_{k\geq0}p_k (x_n)\frac{\prod_{\ell=0}^{k-1}\phi_{i\ell}(x_n)}{\phi_{i}(x)}\prod_{\ell = 0}^{k-1} \tilde  u_{i_0\ell}(x_n) -\sum_{k\geq0}p_k (y_n)\frac{\prod_{\ell=0}^{k-1}\phi_{i\ell}(y_n)}{\phi_{i}(y_n)} \prod_{\ell = 0}^{k-1} \tilde v_{i_0\ell}(y_n)
    \\
    & \leq \Delta_n+\sum_{k\geq0}p_k(x_n)\sum_{\ell = 0}^{k-1}\frac{\phi_{i\ell}(x_n)}{\phi_{i}(x_n)}
    \left(\prod_{\bar\ell = 0}^{\ell-1}u_{i_0\bar\ell}(x_n)\right)\left( \tilde u_{i_0}(x_n)-\tilde v_{i_0}(y_n)\right)
    \left(\prod_{\bar\ell = \ell+1}^{k-1}v_{i_0\bar\ell}(y_n)\right)
  \end{align*}
  On the one hand, since
  \begin{align*}
  |\Delta_n| & \leq \sum_{k\geq0}|p_k(x_n)-p_k(y_n)|C^k\;,\qquad\text{ for } n\geq 1\;,
  \end{align*}
  applying Proposition \ref{prop:results:p_k} (ii), we have $\Delta_n\to0$ as $n\to+\infty$.
  We get
  \begin{align*}
    & \sum_{k\geq0}p_k (x_n)\frac{\prod_{\ell=0}^{k-1}\phi_{i\ell}(x_n)}{\phi_{i}(x)}\prod_{\ell = 0}^{k-1} \tilde  u_{i_0\ell}(x_n) -\sum_{k\geq0}p_k (y_n)\frac{\prod_{\ell=0}^{k-1}\phi_{i\ell}(y_n)}{\phi_{i}(y_n)} \prod_{\ell = 0}^{k-1} \tilde v_{i_0\ell}(y_n)
    \\
    &\leq  \Delta_n+   \l(\sum_{k\geq0}p_k(x_n) C^{k}\frac{C^k-1}{C-1}\r)\left(\tilde u_{i_0}(x_n)-\tilde v_{i_0}(y_n)\right)
    \;.
  \end{align*}

  Since $C> 1$, from Assumption \ref{Assumption_H_0BIS}, 
  we have
  \begin{align*}
    \sum_{k\geq0}p_k(x_n) C^{k}\frac{C^k-1}{C-1}
    &\leq
    \frac{1}{C-1}\sum_{k\geq0}p_k(x_n) C^{2k}
    \leq \frac{1}{C-1}\sum_{k\geq0}p_k(x_n) \exp\l(2k\log(C)\r)
    \\
    & \leq\frac{1}{C-1}
    \sum_{k\geq0}p_k(x_n) \sum_{\ell\geq 0}\frac{\l(2k \log(C)\r)^\ell}{\ell!}
    \\
    & \leq\frac{1}{C-1}
    \sum_{\ell\geq 0}\frac{\l(\log(C)\r)^\ell}{\ell!} 2^\ell M_{\ell}
    \leq \bar{M}\frac{C}{C-1}\;.
  \end{align*}
  Note that
  \begin{align*}
    \frac{D\phi(x)}{\phi(x)} &= \frac{2\kappa x}{|x|^2+1}\;,
    \\
    \frac{D^2\phi(x)}{\phi(x)} &= 4\kappa(\kappa-1)\frac{xx^\top}{(|x|^2+1)^2}+2\kappa \frac{\I_d}{|x|^2+1}\;,
  \end{align*}
  for $x\in\R^d$. Therefore,
  $\tilde b$ is locally Lipschitz and $\tilde\gamma-\gamma$ is equal to a bounded function in $\R^d$ multiplied by $\kappa$. This means that there exists $\kappa$ small enough such that
  \begin{align*}
    \tilde\gamma(x)- \alpha(x)\l(
        \bar{M}\frac{C}{C-1}-1
    \r) \geq~ \frac{1}{2}\l(
        \gamma-\bar \alpha\l(
        \bar{M}\frac{C}{C-1}-1
    \r)
    \r)& > 0\;,
  \end{align*}
  for all ${x}$ in the neighbourhood of $x_0$.  Applying then \eqref{eq:comparison:eq1}-\eqref{eq:comparison:eq2}, we get, for $n$ large enough,
  \begin{align*}
  (\tilde \gamma(x_n) - \alpha(x_n)(M-1))\tilde u_{i_0}(x_n)-\l(\tilde \gamma(y_n) - \alpha(x_n)\l(
        \bar{M}\frac{C}{C-1}-1
    \r)\r)\tilde v_{i_0}(y_n) \leq &\\
  \left(\tilde b(x_n)- \tilde b(y_n)\right)^\top n\left( x_n- y_n\right) + 
  \frac{1}{2}\text{Tr}\left(\sigma\sigma^\top(x_n) A_n - \sigma\sigma^\top(y_n) B_n\right)&\;.
  \end{align*}
  Sending $n$ to infinity, we obtain
  \begin{align*}
  0\geq \l(\tilde \gamma(x_0) - \alpha(x_0)\l(
        \bar{M}\frac{C}{C-1}-1
    \r)\r)\left(\tilde u_{i_0}(x_0) - \tilde v_{i_0}(x_0)\right)\;.
\end{align*}
However, from \eqref{condC2} and for $\kappa$ small enough, the previous equation is in contradiction to \eqref{eq:comparison:absurdHP}.

\end{proof}

\bibliography{Bib}{}
\bibliographystyle{apalike}

\end{document}